    \numberwithin{equation}{section}
    \definecolor{robin}{HTML}{158c89}
    \definecolor{pinkrobin}{HTML}{8c1554}
    \definecolor{redrobin}{HTML}{8c1518}
    \definecolor{greenrobin}{HTML}{158c4e}
    \definecolor{bluerobin}{HTML}{15548c}
    \definecolor{hotpink}{HTML}{e6194B}
    \definecolor{orange}{HTML}{f58231}
    \definecolor{green}{HTML}{3cb44b}
    \definecolor{cyan}{HTML}{42d4f4}
    \definecolor{indigo}{HTML}{4363d8}
    \definecolor{purple}{HTML}{911eb4}
    \definecolor{magenta}{HTML}{f032e6}
    \colorlet{emphcolor}{blue}
    \newcommand{\eref}[1]{\textcolor{       black}{\hyperref[#1]{\eqref{#1}}}}
    \newcommand{\secref}[1]{{\textcolor{    black}{\hyperref[#1]{Section~\ref{#1}}}}}
    \newcommand{\subsecref}[1]{{\textcolor{ black}{\hyperref[#1]{Subsection~\ref{#1}}}}}
    \newcommand{\subsubsecref}[1]{{\textcolor{ black}{\hyperref[#1]{Subsubsection~\ref{#1}}}}}
    \newcommand{\thmref}[1]{{\textcolor{    black}{\hyperref[#1]{Theorem~\ref{#1}}}}}
    \newcommand{\propref}[1]{{\textcolor{   black}{\hyperref[#1]{Proposition~\ref{#1}}}}}
    \newcommand{\lemref}[1]{{\textcolor{    black}{\hyperref[#1]{Lemma~\ref{#1}}}}}
    \newcommand{\corref}[1]{{\textcolor{    black}{\hyperref[#1]{Corollary~\ref{#1}}}}}
    \newcommand{\figref}[1]{{\textcolor{    black}{\hyperref[#1]{Figure~\ref{#1}}}}}
    \newcommand{\defref}[1]{{\textcolor{    black}{\hyperref[#1]{Definition~\ref{#1}}}}}
    \newcommand{\itemref}[1]{{\textcolor{   black}{\hyperref[#1]{Item~\ref{#1}}}}}
    \newcommand{\remref}[1]{{\textcolor{    black}{\hyperref[#1]{Remark~\ref{#1}}}}}
    \newcommand{\exampleref}[1]{{\textcolor{black}{\hyperref[#1]{Example~\ref{#1}}}}}
    \newcommand{\chapref}[1]{{\textcolor{    black}{\hyperref[#1]{Chapter~\ref{#1}}}}}
    \newcommand{\emphnew}[1]{\textcolor{    emphcolor}{\emph{#1}}}
    \newcommand{\emphnewmath}[1]{\textcolor{    emphcolor}{#1}}
    \newcommand{\todonote}[1]{}
    \newcommand{\lt}{<}
    \newcommand{\gt}{>}
    \newcommand{\braceme}[1]{[#1]}
    \newcommand{\bracketbox}[2]{\left\{ \parbox[c][][c]{#1}{\centering #2}\right\}}
    \newcommand{\includegraphicswide}[1]{\makebox[\textwidth][c]{\includegraphics[width=.80\paperwidth]{#1}}}
    \newcommand{\nomargins}[1]{\makebox[\textwidth][c]{#1}}
    \newcommand{\lpadtext}[1]{\; \text{#1}}
    \newcommand{\padtext}[1]{\; \text{#1} \;}
    \DeclareMathOperator{\given}{\; \vert \;}
    \DeclareMathOperator{\isoto}{\xrightarrow{\sim}}
    \DeclareMathOperator{\image}{Im}
    \DeclareMathOperator{\nextnode}{nextnode}
    \DeclareMathOperator{\nextlive}{nextlive}
    \newlength{\offsetpage}
    \newcommand{\caseif}               [2]{\textbf{\textcolor{black}{Case #1:}} \ul{#2}}
    \newcommand{\specialword} [1]{\textbf{#1}}
    \newcommand{\emphspecial} [1]{\emphnew{\specialword{#1}}}
    \newcommand{\lesslex}   {\lt_{\text{lex}}}
    \newcommand{\lesseqlex} {\leq_{\text{lex}}}
    \newcommand{\inefunc}               {\delta}
    \newcommand{\pairsilessthanj}       {\binom{[n]}{2}}
    \newcommand{\epsvector}             {\boldsymbol{\epsilon}}
    \newcommand{\edgeweight}            {\omega_{\epsvector}}
      \newcommand{\catconditionone}       {(1)\xspace}
      \newcommand{\catconditiontwo}       {(2)\xspace}
      \newcommand{\catconditionthree}     {(3)\xspace}
      \newcommand{\shiconditionone}       {(1)\xspace}
      \newcommand{\shiconditiontwo}       {(2)\xspace}
      \newcommand{\shiconditionthree}     {(3)\xspace}
      \newcommand{\compl}                   {c}
      \newcommand{\live}                    {live\xspace}
      \newcommand{\specialf}                {\boldsymbol{f}}
      \newcommand{\mcatmap}                 {\Phi_{\textnormal{Cat}_n}}
      \newcommand{\mcatmapinv}              {\mcatmap^{-1}}
      \newcommand{\mshimap}                 {\Phi_{\textnormal{Shi}_n}}
      \newcommand{\mcatconditionone}        {(C1)\xspace}
      \newcommand{\mcatconditiontwo}        {(C2)\xspace}
      \newcommand{\mcatconditionthree}      {(C3)\xspace}
      \newcommand{\mcatconditionthreeprime} {(C3$^{\prime}$)\xspace}
      \newcommand{\mcatconditions}          {\mcatconditionone, \mcatconditiontwo, and \mcatconditionthree}
      \newcommand{\canonicalmcatface}       {\mathfrak{c}}
      \newcommand{\mtree}                   {$[n]$-decorated $(m+1)$-ary tree\xspace}
      \newcommand{\mtrees}                  {$[n]$-decorated $(m+1)$-ary trees\xspace}
      \newcommand{\rhodot}                  {\overline{\rho}}
      \newcommand{\mshiconditionone}        {(S1)\xspace}
      \newcommand{\mshiconditiontwo}        {(S2)\xspace}
      \newcommand{\mshiconditionthree}      {(S3)\xspace}
      \newcommand{\mshiconditionfour}       {(S2M)\xspace}
      \newcommand{\mshiconditionfive}       {(S3M)\xspace}
      \newcommand{\mshiconditions}          {\mshiconditionone, \mshiconditiontwo, \mshiconditionfour, \mshiconditionthree, and \mshiconditionfive}
      \newcommand{\canonicalmshiface}       {\mathfrak{s}}
      \newcommand{\precconditionone}        {(PR0)\xspace}
      \newcommand{\precconditiontwo}        {(PR1)\xspace}
      \newcommand{\precconditionthree}      {(PR2)\xspace}
      \newcommand{\precconditions}          {\precconditionone, \precconditiontwo, and \precconditionthree}
      \newcommand{\mcattreeconditionone}    {(T1)\xspace}
      \newcommand{\mcattreeconditiontwo}    {(T2)\xspace}
      \newcommand{\mcattreeconditionthree}  {(T3)\xspace}
      \newcommand{\mnntriple}               {\boldsymbol{\eta}}
      \newcommand{\mnnsub}                  [1]{\eta_{#1}}
      \newcommand{\mnnzero}                 {\mnnsub{0}}
      \newcommand{\mnnone}                  {\mnnsub{1}}
      \newcommand{\mnntriplefull}           {\mnnzero, \dots, \mnnsub{m}} 
      \newcommand{\mnntriplesub}            [1]{\mnntriple_#1} 
      \newcommand{\mnntriplep}              {\mnntriplesub{p}} 
      \newcommand{\mnnzeroinv}              {\mnnzero^{-1}}
      \newcommand{\mnnsubinv}               [1]{\mnnsub{#1}^{-1}}
      \newcommand{\mnndashedsub}            [1]{E_{#1}}
      \newcommand{\mnndashedfull}           { \mnndashedsub{1},\dots,\mnndashedsub{m} }
      \newcommand{\mcatface}                [1]{\mathcal{C}_m(#1)}
      \newcommand{\mcattreeconstruct}       {\Psi_{\textnormal{Cat}_n}}
      \newcommand{\mnndashedcup}            {\cup_{s=1}^{m} \mnndashedsub{s}}
      \newcommand{\mshitreeconditionone}    {(ST1)\xspace}
      \newcommand{\mshitreeconditiontwo}    {(ST2)\xspace}
      \newcommand{\mshitreeconditionthree}  {(ST3)\xspace}
      \newcommand{\mshitreeconditionfour}   {(ST2M)\xspace}
      \newcommand{\mshitreeconditionfive}   {(ST3M)\xspace}
      \newcommand{\shirank}                 {h}
      \newcommand{\kthChildOfColor}         [2]{v^{({#1})}_{#2}}
    \theoremstyle{definition}
    \newtheorem{definition}{Definition}[section]
    \newtheorem{example}[definition]{Example}
    \theoremstyle{plain}
    \newtheorem{lemma}[definition]{Lemma}
    \newtheorem{theorem}[definition]{Theorem}
    \newtheorem{corollary}[definition]{Corollary}
    \newtheorem{proposition}[definition]{Proposition}
    \theoremstyle{remark}
    \newtheorem{remark}[definition]{Remark}
    \title{Bijections for faces of the Shi and Catalan arrangements}
    \author{Duncan Levear}
\begin{document}

\maketitle

\subsection*{Abstract}
  In 1986, Shi derived the famous formula $(n+1)^{n-1}$ for the number of regions of the Shi arrangement, a hyperplane arrangement in $\mathbb{R}^n$. There are at least two different bijective explanations of this formula, one by Pak and Stanley, another by Athanasiadis and Linusson. In 1996, Athanasiadis used the finite field method to derive a formula for the number of $k$-dimensional faces of the Shi arrangement for any $k$. Until now, the formula of Athanasiadis did not have a bijective explanation. In this paper, we extend a bijection for regions defined by Bernardi to obtain a bijection between the $k$-dimensional faces of the Shi arrangement for any $k$ and a set of decorated binary trees. Furthermore, we show how these trees can be converted to a simple set of functions of the form $f: [n-1] \to [n+1]$ together with a marked subset of $\text{Im}(f)$. This correspondence gives the first bijective proof of the formula of Athanasiadis. In the process, we also obtain a bijection and counting formula for the faces of the Catalan arrangement. All of our results generalize to both extended arrangements.

\subsection*{Acknowledgments}
  This paper is a condensed version of the author's Ph.D.\ dissertation. The author thanks his advisor Olivier Bernardi for suggesting the project and for sharing numerous illuminating suggestions. Additional thanks to Christos Athanasiadis for very helpful background references.

\setcounter{tocdepth}{4}
\tableofcontents

\section{Introduction} \label{sec_introduction}
  A (real) \emphnew{hyperplane arrangement} is a finite collection of affine hyperplanes in $\mathbb{R}^n$ for $n \geq 1$. There are several interesting examples, including the braid, Catalan, and Shi arrangements (defined below). These arrangements have been studied in numerous research papers, such as \cite{cite_armstrong2012shi, cite_athanasiadis2010acombinatorial, cite_athanasiadis1999asimple, cite_postnikov2000deformationsof, cite_gill1998thenumber, cite_seo2012shithreshold, cite_hopkins2015bigraphicalarrangements, cite_hopkins2011orientationssemiorders, cite_corteel2015bijectionsbetween, cite_bernardi2018deformationsof, cite_gessel2019labeledbinary, cite_athanasiadis2004generalizedcatalan}. However, one aspect of these arrangements that has received less attention is their faces, which are the focus of this paper. Our main result is an explicit bijection between the faces of the Catalan/Shi arrangements and certain decorated binary trees. In turn, we obtain a bijective explanation for counting formulae that were previously only obtained by a finite field method. We start with some basic definitions.

  \subsection{Hyperplane arrangements} \label{subsec_arrangements}
    \begin{figure}[t]
      \centering
      \nomargins
      {
        \begin{tikzpicture}
        \begin{scope}[scale=.5]
          \node at (0,4.3) {\bf Braid};
          \draw[-,ultra thick] (-3.0,0) -- (3.0,0) node[above]{\tiny $x_1-x_3=0$};
          \draw[-,rotate=60,ultra thick] (-3.3,0) -- (3.3,0) node[below,rotate=60]{\tiny $x_1-x_2=0$};
          \draw[-,rotate=-60, ultra thick] (-3.3,0) -- (3.3,0) node[above,rotate=-60]{\tiny $x_2-x_3=0$};
        \end{scope}
        
        \begin{scope}[scale=.5,shift={(10,0)}]
          \node at (0,4.3) {\bf Shi};
          \draw[-,ultra thick] (-3.0,0) -- (3.0,0) node[above]{};
          \draw[-,black,thick] (-3.0,1) -- (3.0,1) node[above]{\tiny $x_1-x_3=1$};
          \draw[-,rotate=60,ultra thick] (-3.3,0) -- (3.3,0) node[below,rotate=60]{};
          \draw[-,black,rotate around={(60:(0,0))},thick] (-3.3,1) -- (3.3,1) node[below,rotate=60]{\tiny $x_1-x_2=1$};
          \draw[-,rotate=-60, ultra thick] (-3.3,0) -- (3.3,0) node[above,rotate=-60]{};
          \draw[-,black,rotate around={(-60:(0,0))}, thick] (-3.3,1) -- (3.3,1) node[above,rotate=-60]{\tiny $x_2-x_3=1$};
        \end{scope}
        
        \begin{scope}[scale=.5,shift={(20,0)}]
          \node at (0,4.3) {\bf Catalan};
          \draw[-,ultra thick] (-3.0,0) -- (3.0,0) node[above]{};
          \draw[-,black,thick] (-3.0,1) -- (3.0,1) node[above]{};
          \draw[-,black,thick] (-3.0,-1) -- (3.0,-1) node[above,xshift=.1cm]{\tiny $x_1-x_3=-1$};
          \draw[-,rotate=60,ultra thick] (-3.3,0) -- (3.3,0) node[below,rotate=60]{};
          \draw[-,black,rotate around={(60:(0,0))},thick] (-3.3,1) -- (3.3,1) node[right,rotate=0]{};
          \draw[-,black,rotate around={(60:(0,0))},thick] (-3.3,-1) -- (3.3,-1) node[below,xshift=-.05cm,yshift=.05cm,rotate=60]{\tiny \; \; $x_1-x_2=-1$};
          \draw[-,rotate=-60, ultra thick] (-3.3,0) -- (3.3,0) node[above,rotate=-60]{};
          \draw[-,black,rotate around={(-60:(0,0))}, thick] (-3.3,1) -- (3.3,1) node[above,rotate=-60]{};
          \draw[-,black,rotate around={(-60:(0,0))}, thick] (-3.3,-1) -- (3.3,-1) node[above,rotate=-60]{\tiny $x_2-x_3=-1$};
        \end{scope}
        \end{tikzpicture}
      }
      \caption[Braid, Shi, and Catalan arrangements]{The braid, Shi, and Catalan hyperplane arrangements\footnotemark{} in $\mathbb{R}^3$.}
      \label{fig_arrangements}
    \end{figure}
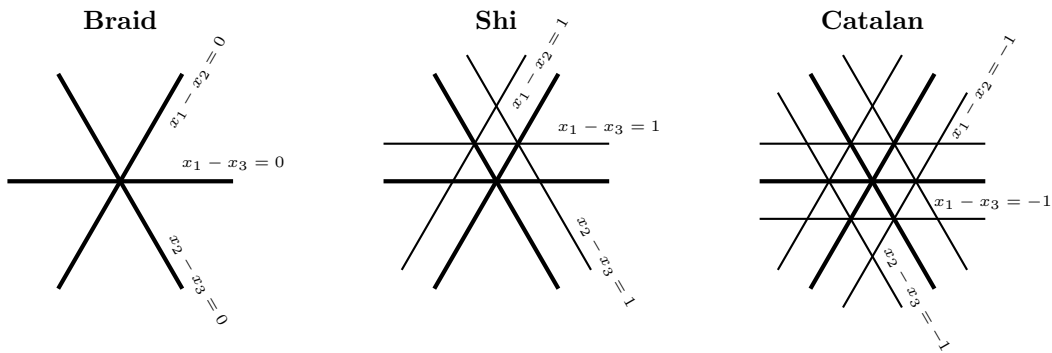
    The \emphnew{regions} of a hyperplane arrangement are the connected components of the complement of its hyperplanes. The \emphnew{braid arrangement} in $\mathbb{R}^n$ is the collection of hyperplanes 
    \begin{align}
    x_i - x_j = 0, \; \text{for} \; 1 \leq i < j \leq n. 
    \end{align} 
    It is not hard to see that there are $n!$ regions, one for each possible relative ordering of the coordinates $(p_i) \in \mathbb{R}^n$. The \emphnew{Shi arrangement} in $\mathbb{R}^n$ is the collection of hyperplanes 
    \begin{align}
    x_i - x_j = 0,1, \; \text{for} \; 1 \leq i < j \leq n. 
    \end{align} 
    It is known that there are $(n+1)^{n-1}$ regions. This formula was first derived algebraically by Shi \cite{cite_shi1986thekazhdan} in 1986, and later re-proved bijectively \cite{cite_athanasiadis1999asimple, cite_bernardi2018deformationsof, cite_stanley1996hyperplanearrangements, cite_duarte2015thebraid}. The \emphnew{Catalan arrangement} in $\mathbb{R}^n$ is the collection of hyperplanes
    \begin{align}
    x_i - x_j = 0,1, \; \text{for} \; 1 \leq i,j \leq n. 
    \end{align}
    There are $n! \text{Cat}_n$ regions, where $\text{Cat}_n = \frac{(2n)!}{n!(n+1)!}$ is the $n$th Catalan number. This result first appeared explicitly in \cite{cite_stanley1996hyperplanearrangements}, where it is obtained by describing a connection to semiorders. All three arrangements are shown in \figref{fig_arrangements} for $n=3$. 

    \footnotetext{Technically speaking, \figref{fig_arrangements} displays the induced hyperplane arrangements in the vector space $\{(x_1,x_2,x_3) \in \mathbb{R}^3 \given \sum x_i =0\} \cong \mathbb{R}^2$. Such induced arrangements are referred to in the literature as ``essentializations''. In this paper, we will not use any essentializations.} 

  \subsection{Faces} \label{subsec_faces}
    A \emphnew{face} of a hyperplane arrangement is the solution set to a non-void system of equalities and (strict\footnote{We use strict inequalities so that the faces are disjoint as subsets of $\mathbb{R}^n$. Taking the topological closure of our faces gives the conventional notion of a face.}) inequalities, one for each hyperplane. The \emphnew{dimension} of a face is the dimension of its affine span. The regions are the highest-dimensional faces. Viewing the regions as polytopes, the faces of the arrangement are the (open) faces of these polytopes. We also refer to the faces of the braid (resp. Shi, Catalan) arrangement as \emphnew{braid} (resp. \emphnew{Shi, Catalan}) \emphnew{faces}. 
    
    The braid faces have an easy combinatorial description, which we recall now. A \emphnew{set partition} of a set $X$ is a set of pairwise-disjoint nonempty subsets, called \emphnew{blocks}, whose union equals $X$. The number of set partitions of an $n$-element set with $k$ blocks is denoted by $\emphnewmath{S(n,k)}$, and called the \emphnew{Stirling number of the second kind}. An \emphnew{ordered set partition} of a set $X$ is a set partition of $X$ with a chosen ordering of the blocks. The number of ordered set partitions of an $n$-element set with $k$ blocks is therefore $k! S(n,k)$. 
    
    Here and throughout we use the notation $\emphnewmath{[n]}$ for the set \{1,2,\dots,n\}. It is not hard to prove that the braid faces are in bijection with ordered set partitions of $[n]$ (see \cite[Exercise 2.10]{cite_stanley2004anintroduction}). From this bijection we obtain the counting formula $k! S(n,k)$ for the number of Braid faces of dimension $k = 1,2,\dots,n$.
    
    Counting the Catalan and Shi faces is not so simple. In 1996, Athanasiadis devised the innovative ``finite field method,'' and thereby obtained counting formulae for the number of faces of these arrangements. His result for the Shi arrangement is the following:
    \begin{theorem} \cite[Cor. 8.2.2]{cite_athanasiadis1996algebraiccombinatorics} \label{thm_ath_shi}
      The number of $k$-dimensional faces of the Shi arrangement in $\mathbb{R}^n$ is 
      \begin{align} \label{eq_athanasiadis_shi_counting}
        \binom{n}{k} \sum_{i=0}^{n-k} \binom{n-k}{i} (-1)^i (n-i+1)^{n-1}, \; \; 1 \leq k \leq n.
      \end{align}
    \end{theorem}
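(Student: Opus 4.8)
The known route to \thmref{thm_ath_shi}, which I would reconstruct, is Athanasiadis's \emph{finite field method} applied together with an analysis of the intersection lattice of the Shi arrangement. Write $\mathrm{Shi}_n$ for the Shi arrangement in $\mathbb{R}^n$ and $f_k(\mathcal{A})$ for the number of $k$-faces of an arrangement $\mathcal{A}$. The first step is purely structural and holds for any $\mathcal{A}$: every face has an affine span $X$, which is a flat of $\mathcal{A}$, and the faces with affine span a fixed flat $X$ are exactly the regions of the restricted arrangement $\mathcal{A}^X = \{\, H \cap X : H \in \mathcal{A},\ X \not\subseteq H \,\}$ inside $X \cong \mathbb{R}^{\dim X}$. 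Therefore
\[
  f_k(\mathcal{A}) \;=\; \sum_{\substack{X \in \mathcal{L}(\mathcal{A}) \\ \dim X = k}} r(\mathcal{A}^X),
\]
where $r(\cdot)$ is the number of regions, and by Zaslavsky's theorem $r(\mathcal{A}^X) = (-1)^k \chi_{\mathcal{A}^X}(-1)$. So it suffices to classify the flats of $\mathrm{Shi}_n$ of each dimension and to compute the characteristic polynomials of their restricted arrangements.

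For the flats: a flat of $\mathrm{Shi}_n$ is the solution set of a consistent system of relations $x_i = x_j$ and $x_i - x_j = 1$, so (unwinding consistency) it records a set partition of $[n]$ into \emph{blocks} — the coordinates forced equal — whose blocks are grouped into chains, with consecutive blocks of a chain sitting at consecutive integer levels; the dimension of the flat equals the number of chains. Fixing a flat $X$ of dimension $k$ and substituting $x_i = y_{c(i)} + \ell(i)$ (where $c(i)$ is the chain of $i$ and $\ell(i)$ its level), every Shi hyperplane not containing $X$ becomes a hyperplane of the form $y_a - y_b = \mathrm{const}$ with $a \neq b$, so $\mathrm{Shi}_n^X$ is a deformation of the braid arrangement in the $k$ chain-variables. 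Its characteristic polynomial I would obtain by the finite field method — counting the points of $\mathbb{F}_q^k$ lying on none of the restricted hyperplanes, for $q$ a large prime, which is a polynomial in $q$ — and then evaluate at $q = -1$. (Equivalently, one can run one combined count: for $q$ a large prime, $\sum_{p \in \mathbb{F}_q^n} z^{\dim X_p} = \sum_{X \in \mathcal{L}(\mathrm{Shi}_n)} \chi_{\mathrm{Shi}_n^X}(q)\, z^{\dim X}$, where $X_p$ is the smallest flat through $p$; then $f_k(\mathrm{Shi}_n)$ is $(-1)^k$ times the coefficient of $z^k$ in the right-hand side after setting $q = -1$.)

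Finally, collecting $(-1)^k\chi_{\mathrm{Shi}_n^X}(-1)$ over all dimension-$k$ flats, grouped by their block-and-chain data, should simplify to the stated expression. The inner alternating sum $\sum_{i=0}^{n-k}\binom{n-k}{i}(-1)^i(n-i+1)^{n-1}$ is an inclusion--exclusion count: it equals the number of functions $[n-1] \to [n+1]$ whose image contains a fixed $(n-k)$-element subset, the exponent $n-1$ being the rank of the essentialized Shi arrangement and the base $n+1$ accounting for its $(n+1)^{n-1}$ regions (the case $k = n$); the prefactor $\binom{n}{k} = \binom{n}{n-k}$ records the accompanying choice of $n-k$ of the $n$ coordinates. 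The step I expect to be the genuine obstacle is the structural one: the intersection lattice of $\mathrm{Shi}_n$ is far more intricate than that of the braid arrangement — because the hyperplanes $x_i - x_j = 0$ and $x_i - x_j = 1$ interact, flats are not merely set partitions — and one must pin down exactly which block-and-chain data give flats, identify each restricted arrangement correctly, and keep the resulting multiple sum in a form that collapses to the compact binomial expression. It is precisely here that a direct bijection, rather than a finite field computation, becomes attractive.
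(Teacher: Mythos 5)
Your route is genuinely different from the paper's. The paper does not prove \thmref{thm_ath_shi} by the finite field method at all: it treats the formula as a cited result of Athanasiadis and re-derives it as the $m=1$ case of \corref{cor_shi_gf}, whose proof rests on the bijections of \thmref{thm_shi_bijection} and \thmref{thm_mshi_everything} --- Shi faces of dimension $k$ correspond to $[n]$-decorated binary trees of Shi type with $k$ free nodes, which correspond (via a Pr\"ufer-type encoding) to the marked functions $(f,S)$ of \eref{eq_athanasiadis_shi_faces} --- after which the formula is a one-line inclusion--exclusion count of those pairs, exactly the count you give in your final paragraph. Your proposal instead reconstructs Athanasiadis's original argument: decompose $f_k$ over flats of dimension $k$, apply Zaslavsky to each restriction, and evaluate everything by point counts over $\mathbb{F}_q$. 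That framework is sound (the identity $\sum_{p\in\mathbb{F}_q^n} z^{\dim X_p} = \sum_X \chi_{\mathcal{A}^X}(q)\,z^{\dim X}$ is precisely the device Athanasiadis uses), and it is a legitimate alternative; what it buys is a short conceptual setup, while the paper's route buys an explicit bijection --- which is the whole point of the paper, since Athanasiadis himself asked for one.

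That said, as written your proposal has a real gap rather than being a complete alternative proof: the entire computational core is deferred. You do not determine which block-and-chain data actually arise as flats of $\mathrm{Shi}_n$ (the consistency conditions coming from the interaction of $x_i-x_j=0$ and $x_i-x_j=1$ are exactly the nontrivial part), you do not compute $\chi_{\mathrm{Shi}_n^X}(q)$ for the restrictions or, equivalently, carry out the count of $\sum_p z^{\dim X_p}$ over $\mathbb{F}_q^n$ (which in Athanasiadis's treatment is done by a circular-placement argument), and the assertion that the resulting multiple sum ``should simplify to the stated expression'' is precisely the claim that needs proof. You flag this yourself as the obstacle, correctly, but it means the proposal is a proof strategy rather than a proof. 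One small imprecision along the way: a Shi hyperplane not containing $X$ whose two indices lie in the same chain has empty intersection with $X$ and is simply discarded from $\mathcal{A}^X$, rather than restricting to a hyperplane $y_a-y_b=\mathrm{const}$ with $a\neq b$.
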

    Athanasiadis remarked that, by inclusion-exclusion, the formula \eref{eq_athanasiadis_shi_counting} enumerates the set of \emphnew{marked functions}, 
    \begin{align} \label{eq_athanasiadis_shi_faces}
      \left\{(f,S) \given \begin{array}{l} f : [n-1] \to [n+1] \\ S \subset \text{Im}(f) \cap [n] \\ |S| = n-k \\ \end{array} \right \},
    \end{align}
    where $\emphnewmath{\text{Im}(f)}$ denotes the image of $f$. He raised the question of a bijective explanation. One of our main results is an answer to this question, that is, an explicit bijection between the set of Shi faces and the set of marked functions in \eref{eq_athanasiadis_shi_faces}.

  \subsection{Main results} \label{subsec_mainresults}
    We have three main results. First, we obtain a bijection between the set of Catalan faces and a set of decorated binary trees that we call $[n]$-decorated binary trees. Second, using the map for Catalan faces as an ingredient, we obtain a bijection between the set of Shi faces and a simple subset of these trees (namely, those that ``decrease to the right''). The key fact is that among all Catalan faces belonging to a given Shi face, there is a unique face whose corresponding tree is ``decreasing to the right.'' Our bijection (and overall method) is an extension of an approach for regions due to Bernardi \cite{cite_bernardi2018deformationsof}. Our third result is a bijection between the trees corresponding to Shi faces and the set of marked functions \eref{eq_athanasiadis_shi_faces}. An example of the resulting bijections for the Shi arrangement with $n=3$ is shown in \figref{fig_shi_example}. 

    Furthermore, our results extend to the ``extended arrangements,'' indexed by an integer parameter $m \geq 1$. The \emphnew{$m$-Catalan arrangement} in $\mathbb{R}^n$ is the collection of hyperplanes 
    \begin{align}
        x_i - x_j = 0,1,\dots,m-1,m, \; \text{for} \; 1 \leq i , j \leq n.
    \end{align}
    The \emphnew{$m$-Shi arrangement} in $\mathbb{R}^n$ is the collection of hyperplanes 
    \begin{align}
        x_i - x_j = -m+1,\dots,m-1,m, \; \text{for} \; 1 \leq i < j \leq n.
    \end{align}
    The classical arrangements are obtained by specializing to $m=1$. For any $m \geq 1$, we describe an explicit bijection between the faces of these arrangements and certain decorated $(m+1)$-ary trees. In the $m$-Shi case, we also obtain a set of functions analogous to those in \eref{eq_athanasiadis_shi_faces}. The counting formulae were already found by Athanasiadis, but the bijective correspondences are new.
    
    \begin{figure}[tp]
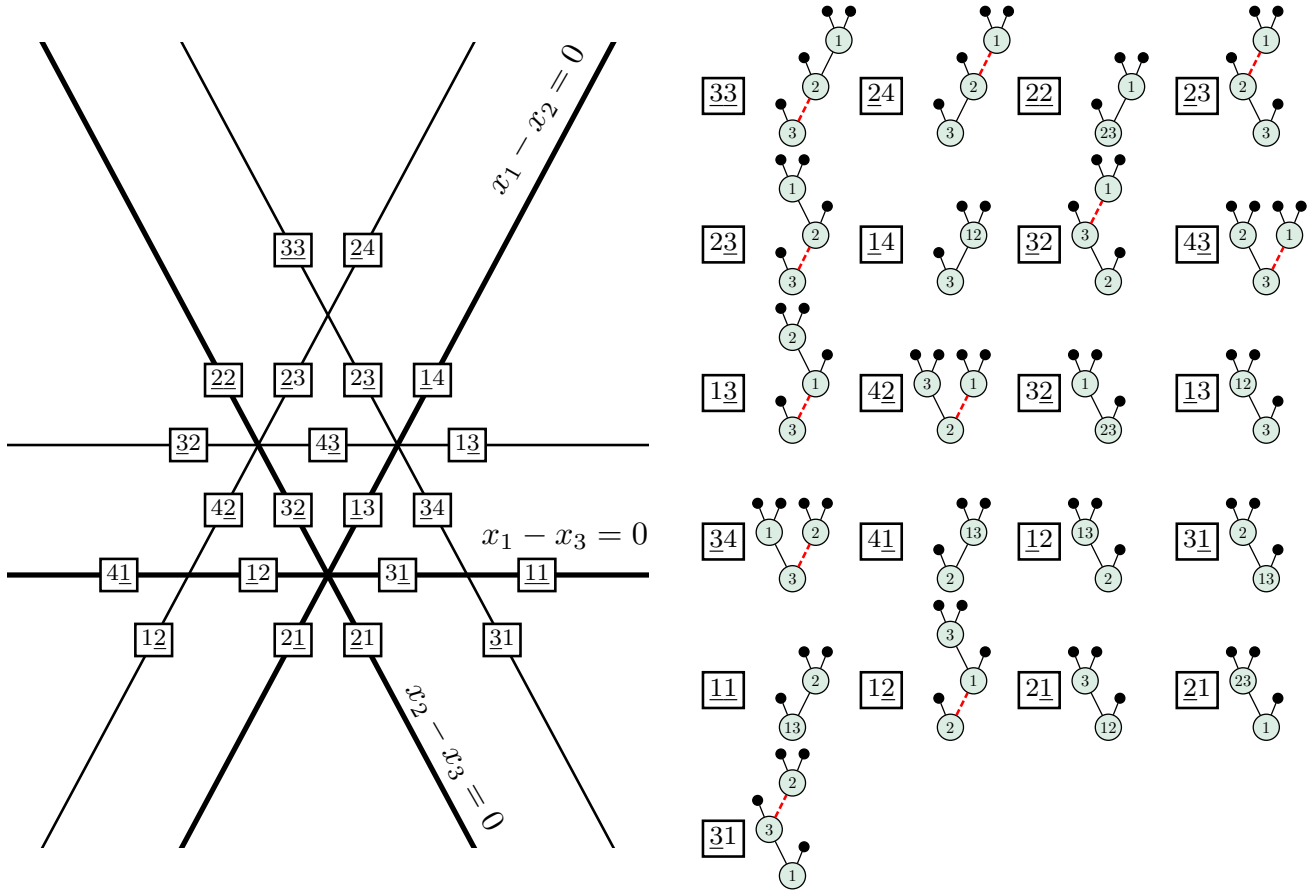

      \centering
      \includegraphicswide{figures/imgs/1shi_trees_and_prufer}
      \caption[Example of Shi bijection in $\mathbb{R}^3$]{The two-dimensional faces of the Shi arrangement in $\mathbb{R}^3$, each labeled by its image under our bijection to ``marked functions,'' that is, pairs $(f,\{s\})$ where $f : [2] \to [4]$, displayed as a number sequence, and $s \in \image(f) \cap [3]$, shown by underlining the number $s$. In addition, we display each of the corresponding $[3]$-decorated binary trees with exactly two free nodes and such that all right internal edges are descents. The bijection between faces and trees is described in \secref{subsec_mshi_bijection}, and the bijection between trees and ``marked functions'' is described in \secref{sec_additional_bijections}.}
    \label{fig_shi_example}
    \end{figure}

\section{Bijections from faces to trees} \label{sec_bijections}
  In this section, we describe our two main bijections. First, we describe our bijection from a set of decorated trees to the faces of the Catalan arrangement. We also give a generalization for the $m$-Catalan arrangement. Next, we describe our bijection from a sub-family of the decorated trees to the faces of the Shi arrangement, and we also give the analogous result for the $m$-Shi arrangement. We postpone the proofs until \secref{sec_proof_mcatalan} and \secref{sec_proof_mshi}.

  \subsection{Result for the Catalan arrangement} \label{subsec_cat_bijection}
    We first clarify the appropriate set of trees, and then describe our bijection.
    \subsubsection{Decorated binary trees} \label{subsubsec_decorated_binary_trees}
      A \emphnew{tree} is a finite connected acyclic graph. A \emphnew{rooted tree} is a tree with a distinguished vertex, which is called its \emphnew{root}. We adopt the usual vocabulary of \emphnew{parent}, \emphnew{children}, \emphnew{siblings}, \emphnew{leaves} (vertices with no children), and \emphnew{nodes} (vertices with some children). When drawing rooted trees, we conventionally draw the root at the base, meaning e.g.\ that a child appears above its parent. 
      A \emphnew{rooted plane tree} is a rooted tree with a chosen ordering of the children of each node.
      A \emphnew{binary tree} is a rooted plane tree such that each vertex has either $2$ or $0$ children.
      For binary trees, the first child of a node is called its \emphnew{left-child}, and the second is called its \emphnew{right-child}. 
      The edge connecting a node to its left (resp. right) child is called a \emphnew{left} (resp. \emphnew{right}) \emphnew{edge}.
      An \emphnew{internal edge} is an edge between two nodes.
      \begin{definition} \label{def_binary_trees}
        An \emphnew{$[n]$-decorated binary tree} is a binary tree together with the following decorations:
        \begin{itemize}
          \item Each node is labeled with a non-empty subset of $[n]$. Together, the set of labels forms a partition of $[n]$.
          \item All right internal edges are of two types: solid and dashed.
        \end{itemize}
      \end{definition}
      We consider any edge that is not dashed to be solid, in particular, all left-edges must always be solid.
      \figref{fig_seven_decorated_tree} shows a $[7]$-decorated binary tree.
      \begin{figure}[t]
        \begin{center}
          \includegraphics[scale=2.3]{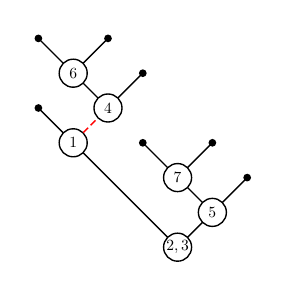}
        \end{center}
        \caption[A $\braceme{7}$-decorated binary tree.]{A $[7]$-decorated binary tree.}
        \label{fig_seven_decorated_tree}
      \end{figure}
      To define our bijection between Catalan faces and $[n]$-decorated binary trees, we require a certain total ordering on the set of vertices. For $v$ any vertex in a binary tree, let $\emphnewmath{\rho(v)}$ denote the word in the alphabet $\{E_0,E_1\}$ encoding the path from the root to $v$, where $E_0$ refers to taking a left-edge, and $E_1$ refers to taking a right-edge.  Let $\emphnewmath{\rhodot(v)}$ be the number of right-edges in this path. 
      \begin{definition}
        Let $T$ be a binary tree and let $v \neq w$ be two vertices. We say \emphnew{$v \prec_T w$} if either:
        \begin{itemize}
          \item $\rhodot(v) < \rhodot(w)$, or
          \item $\rhodot(v) = \rhodot(w)$ and either $\rho(v)$ is a proper prefix of $\rho(w)$, or the first index at which they differ has $E_{1}$ in $\rho(v)$ and $E_{0}$ in $\rho(w)$.
        \end{itemize}
      \end{definition}
      This order is illustrated by example in \figref{fig_binary_prect}.
      \begin{figure}[t]
        \centering
        \includegraphics[width=.5\textwidth]{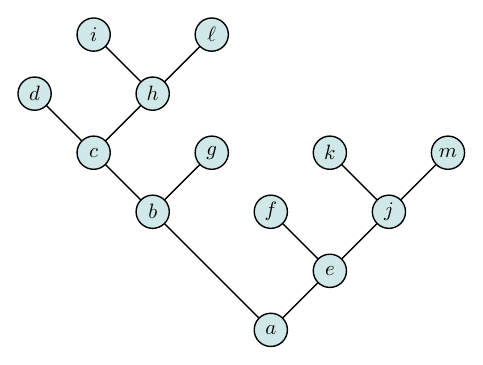}
        \caption[A binary tree example of the $\prec_T$ order]{In this binary tree, $a \prec_T b \prec_T \dots \prec_T m$.}
        \label{fig_binary_prect}
      \end{figure}
    \subsubsection{Bijection from \texorpdfstring{$[n]$-decorated}{[n]-decorated} binary trees to Catalan faces}
      In this subsection, we will define the bijection between Catalan faces in $\mathbb{R}^n$ and $[n]$-decorated binary trees. To define the map, we require some further vocabulary and notation. 
      \begin{definition}
        Let $T$ be an $[n]$-decorated binary tree. A \emphnew{captive node} of $T$ is a node that is connected to its parent by a dashed edge. A \emphnew{free node} is a node that is not a captive node. 
      \end{definition}
      \begin{definition}
        Let $T$ be an $[n]$-decorated binary tree. For $i \in [n]$,
        \begin{enumerate}
          \item Let $\emphnewmath{v_T(i)}$ denote the unique node of $T$ whose label contains $i$.
          \item Let $\emphnewmath{v^1_T(i)}$ denote the right-child of $v_T(i)$.
        \end{enumerate}
      \end{definition}
      Our main result about the faces of the Catalan arrangement is the following:
      \begin{theorem} \label{thm_cat_bijection}
        For any $[n]$-decorated binary tree $T$, there exists a unique Catalan face $\canonicalmcatface$ in $\mathbb{R}^n$ consisting of all points $p = (p_1,\dots,p_n)$ for which the following three conditions hold:
        \begin{enumerate}
          \item[\catconditionone] For all $i,j \in [n]$, $p_i \leq p_j$ if and only if $v_T(i) \preceq_{T} v_T(j)$.
          \item[\catconditiontwo] For all $i,j \in [n]$, $p_i < p_j+1$ if and only if $v_T(i) \prec_{T} v^{1}_T(j)$.
          \item[\catconditionthree] For all $i,j \in [n]$, $p_i = p_j+1$ if and only if $v_T(i) = v^1_T(j)$ and $v^1_T(j)$ is a captive node.
        \end{enumerate}
        Let $\mcatmap$ denote the associated mapping $T \mapsto \canonicalmcatface$. Then $\mcatmap$ is a bijection from $[n]$-decorated binary trees to Catalan faces in $\mathbb{R}^n$. Furthermore, an $[n]$-decorated binary tree $T$ has $k$ free nodes if and only if $\mcatmap(T)$ is a face of dimension $k$. 
      \end{theorem}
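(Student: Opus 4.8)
The plan is to isolate the combinatorial data that pins down a Catalan face, build an explicit inverse to $\mcatmap$, and read off dimensions from the equalities cutting out affine spans.

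\smallskip
\noindent\textbf{Reduction to combinatorial data.}
A Catalan face in $\mathbb{R}^n$ is completely determined by recording, for every ordered pair $(i,j)$, the position of $p_i-p_j$ among the seven intervals and points into which $\{-1,0,1\}$ divides $\mathbb{R}$; equivalently, by recording for all $i,j$ whether $p_i$ is less than, equal to, or greater than $p_j$, and whether $p_i$ is less than, equal to, or greater than $p_j+1$. Conditions \catconditionone, \catconditiontwo, \catconditionthree prescribe exactly this data as a function of $T$: \catconditionone gives the first trichotomy, and \catconditiontwo together with \catconditionthree gives the second (a point satisfying \catconditiontwo and \catconditionthree has $p_i>p_j+1$ precisely when both fail, and conversely a point whose trichotomies match the data prescribed by \catconditiontwo, \catconditionthree satisfies those conditions). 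Hence, once we exhibit a single point $p$ satisfying \catconditionone, \catconditiontwo, \catconditionthree, the set $\canonicalmcatface$ of all such points is automatically a single Catalan face: any two of its points lie on the same side of, or on, every Catalan hyperplane and hence in one face $F$, while any point of $F$ has the same trichotomies as $p$ and hence lies in $\canonicalmcatface$, so $\canonicalmcatface=F$. This gives the existence-and-uniqueness part, and reduces the theorem to: (a) $\canonicalmcatface\neq\emptyset$; (b) $\mcatmap$ is a bijection; (c) the dimension count.

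\smallskip
\noindent\textbf{Producing a point (the main obstacle).}
List the nodes of $T$ as $w_1\prec_T w_2\prec_T\cdots\prec_T w_N$. Two elementary facts about $\prec_T$ are used throughout: a vertex is $\prec_T$-smaller than each of its children (immediate from $\rhodot$ and the prefix clause), and, more precisely, the left-child of a vertex is its immediate $\prec_T$-successor among all vertices. Guided by the three conditions, we assign real coordinates to the nodes so that (i) coordinates are strictly increasing along $w_1,\dots,w_N$; (ii) a captive right-child receives its parent's coordinate plus exactly $1$; (iii) a free right-child receives more than its parent's coordinate plus $1$; (iv) a left-child receives a coordinate strictly between its parent's and its parent's plus $1$. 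This requires checking that a system of linear equalities (the captive relations, which propagate along chains of captive right-edges) and strict inequalities (from $\prec_T$ together with (iii) and (iv)) is feasible, and then verifying \catconditionone, \catconditiontwo, \catconditionthree for the resulting point by a case analysis on the relative positions of $v_T(i)$, $v_T(j)$, and $v^1_T(j)$ in the tree. Matching the lexicographic clause of $\prec_T$ against the captive/free dichotomy here is, I expect, the most delicate part of the whole argument.

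\smallskip
\noindent\textbf{Inverse map; injectivity and surjectivity.}
Define $\mcattreeconstruct$ from Catalan faces to $[n]$-decorated binary trees as follows: pick $p$ in the face; take the blocks $\{\,j : p_j=c\,\}$ (over distinct values $c$) as node labels; order the nodes by their coordinate; declare the right-child of the node with coordinate $c$ to be the smallest node in this order whose coordinate is at least $c+1$, and mark it captive exactly when its coordinate equals $c+1$. One checks that such a node always exists and has strictly larger coordinate, and — using the realizability constraints satisfied by actual Catalan faces — that this data assembles into a genuine $[n]$-decorated binary tree; this last step is the second place where real work is needed. Since by the Reduction a Catalan face is determined by precisely the data $\mcattreeconstruct$ records, $\mcatmap(\mcattreeconstruct(F))=F$, so $\mcatmap$ is surjective. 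Running $\mcattreeconstruct$ on $\mcatmap(T)$ recovers the labels and $\prec_T$-order of $T$ from \catconditionone, the right-child structure from \catconditiontwo, and the captive marking from \catconditionthree, so $\mcattreeconstruct(\mcatmap(T))=T$ and $\mcatmap$ is injective. (Alternatively, injectivity together with a direct enumeration of $[n]$-decorated binary trees against the number of Catalan faces would give surjectivity.)

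\smallskip
\noindent\textbf{Dimension.}
Fix $T$ and $p\in\mcatmap(T)$. By \catconditionone the only coordinate-equalities holding identically on $\mcatmap(T)$ are $x_i=x_j$ for $i,j$ in a common node label, and by \catconditionthree the only ``$+1$'' relations are $x_i-x_j=1$ when the label of $i$ is a captive right-child of the label of $j$; all other comparisons are strict on the face by \catconditionone and \catconditiontwo, so these generate every affine relation valid on $\mcatmap(T)$. Working with one coordinate per node, the surviving relations run along the captive right-edges of $T$; these form a subforest (every node has at most one captive child and sits at the bottom of at most one captive edge), so they are linearly independent and number exactly the captive nodes. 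Hence the affine span of $\mcatmap(T)$ has dimension $(\#\text{nodes})-(\#\text{captive nodes})=\#\text{free nodes}$, and since $\mcatmap(T)$ is nonempty and relatively open in its affine span, $\dim\mcatmap(T)=\#\text{free nodes of }T$. The $m$-Catalan version is proved the same way, with $(m+1)$-ary trees and the analogous conditions.
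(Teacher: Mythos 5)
Your overall architecture (uniqueness via hyperplane sides, an explicit point for existence, an explicit inverse for surjectivity, dimension from the rank of the captive equalities) matches the paper's, and the easy parts — the reduction to combinatorial data and the linear-algebra count of the affine span — are fine. But both places you yourself flag as ``where real work is needed'' contain recipes that do not work as stated, and they are precisely the content of the theorem.

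First, the point construction. Your constraints (i)--(iv) are purely local (each child against its own parent) plus the global ordering of node coordinates, and they do not imply \catconditiontwo, which is a global interleaving condition involving the positions of \emph{leaves} in the $\prec_T$ order. Concretely, take $T$ with root labeled $\{1\}$, left-child $\{3\}$, right-child $\{2\}$, all nodes free. Then $v_T(2) \prec_T v^1_T(3)$ (the right-child of $\{3\}$ is a leaf with path $E_0E_1$, which follows $E_1$ in the $\prec_T$ order), so \catconditiontwo forces $p_2 < p_3+1$ on $\canonicalmcatface$. The point $(0,\,1.5,\,0.1)$ satisfies all of your (i)--(iv) — coordinates increase along root $\prec_T \{3\} \prec_T \{2\}$, the left-child lies in $(p_1,p_1+1)$, the free right-child exceeds $p_1+1$ — yet $p_2 > p_3+1$. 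So (i)--(iv) do not single out a Catalan face, and ``verifying \catconditionone, \catconditiontwo, \catconditionthree by a case analysis'' would fail. The paper's construction $p_i := \epsilon_0 + \rhodot(i) + \sum_{e\in\rho(i)}\edgeweight(e)$, with geometrically decaying weights on the solid internal edges taken in $\prec_T$ order, is engineered exactly so that the fractional parts interleave according to $\prec_T$ across the whole tree; some such global device is unavoidable here.

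Second, the inverse map. Declaring ``the right-child of the node with coordinate $c$ is the smallest node whose coordinate is at least $c+1$'' is not well defined and does not invert $\mcatmap$. For $p=(0,\,0.5,\,2)$, both the node $\{1\}$ (at $0$) and the node $\{2\}$ (at $0.5$) would designate $\{3\}$ (at $2$) as their right-child — a conflict — whereas the actual tree $\mcatmapinv(\mcatface{p})$ has $\{2\}$ as the \emph{left}-child of $\{1\}$, $\{3\}$ as the right-child of $\{2\}$, and a leaf as the right-child of $\{1\}$. The issue is that right-children are very often leaves, and the $\prec_T$-position of the leaf $v^1_T(j)$ is what records where the threshold $p_j+1$ falls among the other coordinates; no assignment of nodes to nodes can capture this. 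This is why the paper's $\mcattreeconstruct$ processes the merged, sorted list of all values $p_i$ and $p_i+1$ (the ``sites'' of the $m$-Catalan code) and grows the tree by interleaved open/close operations on buds, with a nontrivial lemma (\lemref{lem_m_sharp_bud_ri}) identifying which bud is first at each step. Your alternative suggestion — injectivity plus a counting argument — would also need the enumeration of Catalan faces as independent input, which the paper does not assume. Both gaps need to be filled with an actual global construction before the proof is complete.
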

      \thmref{thm_cat_bijection} follows from setting $m=1$ in \thmref{thm_mcat_bijection}, our result for the $m$-Catalan arrangement. The map $\mcatmap$ can be made very explicit at the level of individual hyperplanes as follows. A Catalan face may be defined by a choice function 
      \begin{align} 
        \inefunc : \{(i,j,s) \given i,j \in [n], s \in \{0,1\}\} \to \{-1,0,1\},
      \end{align}
      assigning to each hyperplane a choice of $-1$ ($x_i-x_j<s$), $0$ ($x_i-x_j=s$), or $+1$ ($x_i-x_j>s$), such that the resulting system of equalities and inequalities is non-void. Given $T, i, j$, we define $\inefunc(i,j,0)$ and $\inefunc(i,j,1)$ as:
      \begin{align} \label{eq_inefunc_zero}
        \inefunc(i,j,0) &:= 
        \begin{cases} 
          -1 &\padtext{if} v_T(i) \prec_T v_T(j), \\ 
          0 &\padtext{if} v_T(i) = v_T(j), \\ 
          +1 &\lpadtext{otherwise}. 
        \end{cases}  \\
        \inefunc(i,j,s) &:= \label{eq_inefunc_one}
        \begin{cases} 
          -1 &\padtext{if} v_T(i) \prec_T v^{1}_T(j), \\ 
          0 & \padtext{if} v_T(i) = v^{1}_T(j) \lpadtext{and $v^{1}_T(j)$ is a captive node}, \\
          +1 &\lpadtext{otherwise}.
        \end{cases} 
      \end{align}
      The face $\mcatmap(T)$ is the face arising from this choice function $\inefunc$. 
      It is easy to see that this is an equivalent definition of $\mcatmap$ to the one given in \thmref{thm_cat_bijection}. 
      \begin{example}
        Suppose $T$ is the tree in \figref{fig_seven_decorated_tree}. Since $T$ is a $[7]$-decorated binary tree with 5 free nodes, its corresponding Catalan face lives in $\mathbb{R}^7$ and has dimension $5$.
        We list a selection of the inequalities that define the face corresponding to $T$.
        \begin{itemize}
          \item Using the definition of $\inefunc(i,j,0)$, we obtain the relative order of all $x_i$: 
          \begin{align} \label{eq_T_manyinequalities}
            x_2 = x_3 < x_1 < x_5 < x_7 < x_4 < x_6.
          \end{align}
          \item Since $v_T(2) \prec_T v_T(1) \prec_T v^{1}_T(2)$, we have $x_2 < x_1 < x_2 + 1$.
          \item Since $v_T^{1}(1) = v_T(4)$, and $v_T^{1}(1)$ is a captive node, we have $x_1 + 1 = x_4$.
          \item On the other hand, $v_T^{1}(2) = v_T(5)$ but $v_T^{1}(2)$ is a free node, so $x_2+1 < x_5$.
        \end{itemize}
        One point in the face corresponding to $T$ is 
        \begin{align}
          (1.3, \; 1.0, \; 1.0, \; 2.3, \; 2.1, \; 2.4, \; 2.2).
        \end{align}
        For this example, $\inefunc$ is given by the following two tables:
        \begin{center}
          \includegraphics[width=.45\textwidth]{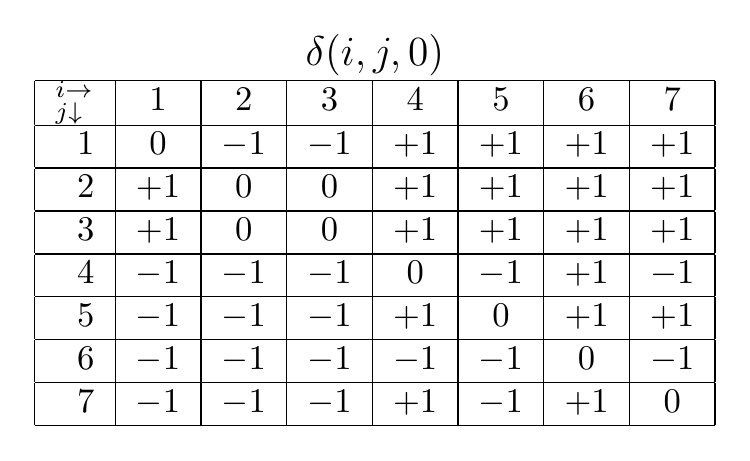}
          \hfill
          \includegraphics[width=.45\textwidth]{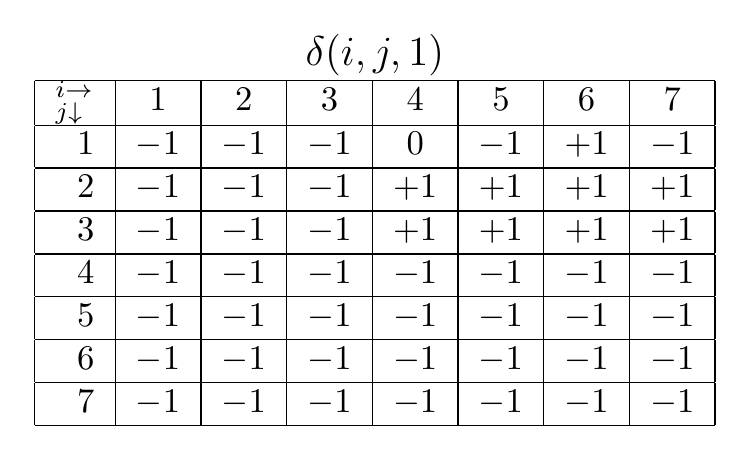}
        \end{center}
        The left table describes whether $x_i$ is less than, equal to, or greater than $x_j$. This simply encapsulates \eref{eq_T_manyinequalities}. The right table describes whether $x_i$ is less than, equal to, or greater than $x_j+1$.
      \end{example}
      \begin{remark} \label{rem_cat_onedim}
        We highlight the lowest-dimensional case of \thmref{thm_cat_bijection} ($k=1$). 
        The theorem establishes a bijection between the one-dimensional Catalan faces in $\mathbb{R}^n$ and $[n]$-decorated binary trees with exactly one free node. 
        It is easy to see that these trees must be right-paths with only dashed edges (with all left-children being leaves). 
        Each of these paths is uniquely defined by the labels of its nodes, which form any ordered set partition. 
        Thus, the number of one-dimensional faces of the Catalan arrangement is $\sum_{i=1}^{n} S(n,i) i!$. 
        This result also appears in \cite[Thm. 1]{cite_gill1998thenumber}, where it is obtained by solving a recurrence relation, and in \cite[Cor. 8.3.2]{cite_athanasiadis1996algebraiccombinatorics} as a result of the finite field method.
      \end{remark}
  \subsection{Result for the \texorpdfstring{$m$-Catalan}{m-Catalan} arrangement} \label{subsec_mcat_bijection}
    A few more definitions are required to state our result for the $m$-Catalan arrangement. First we clarify the appropriate set of trees. 
    \subsubsection{Decorated \texorpdfstring{$m$-ary}{m-ary} trees} \label{subsubsec_decorated_mary_trees}
      An \emphnew{$m$-ary tree} is a rooted plane tree such that each vertex has either $m$ or $0$ children. 
      For a non-root vertex $v$, let $\emphnewmath{\text{lsib}(v)}$ denote the siblings of $v$ to its left (strictly earlier in the child ordering). 
      The \emphnew{rank} of a non-root vertex $v$ is $|\text{lsib}(v)|$. 
      For a node $v$ such that at least one child of rank $>0$ is a node, the \emphnew{cadet} of $v$ is its rightmost such child\footnote{In \cite{cite_bernardi2018deformationsof} ``cadet'' is used to mean the rightmost child that is a node. Our definition differs in that the cadet must have rank $>0$. The word comes from genealogy, in which it refers to a junior heir.}. 
      A \emphnew{cadet edge} is an internal edge whose child is the cadet of the parent. 
      \begin{definition} \label{def_mtrees}
        An \emphnew{$[n]$-decorated $m$-ary tree} is an $m$-ary tree together with the following decorations:
        \begin{itemize}
          \item Each node is labeled with a non-empty subset of $[n]$. Together, the set of labels forms a partition of $[n]$.
          \item All cadet edges are of two types: solid and dashed.
        \end{itemize}
      \end{definition}
      We consider any edge that is not dashed to be solid, in particular, all leftmost edges must always be solid. Note that, each node can have at most one child that is joined by a dashed edge, which must be of rank $>0$. 
      \figref{fig_nine_decorated_tree} shows a $[9]$-decorated binary tree.

      To define our bijection between $m$-Catalan faces and \mtrees, we require a certain total ordering on the set of vertices. For $v$ any vertex in an $(m+1)$-ary tree, let $\emphnewmath{\rho(v)}$ denote the word in the alphabet $\{E_0,E_1,\dots,E_{m}\}$ encoding the path from the root to $v$, where $E_i$ refers to taking an edge of rank $i$. Let $\emphnewmath{\rhodot(v)} := \sum_{i \geq 0} i \cdot |\rho_{i}(v)|$ where $|\rho_{i}(v)|$ is the number of occurrences of letter $E_i$ in $\rho(v)$.
      \begin{definition}
        Let $T$ be an $m$-ary tree and let $v \neq w$ be two vertices. We say \emphnew{$v \prec_T w$} if either:
        \begin{itemize}
          \item $\rhodot(v) < \rhodot(w)$, or
          \item $\rhodot(v) = \rhodot(w)$ and either $\rho(v)$ is a proper prefix of $\rho(w)$, or the first index at which they differ has $E_{b}$ in $\rho(v)$ and $E_{a}$ in $\rho(w)$ with $a < b$.
        \end{itemize}
      \end{definition}
      The important features of this ordering are the following:
      \begin{figure}[t]
        \begin{center}
          \includegraphics[scale=2.3]{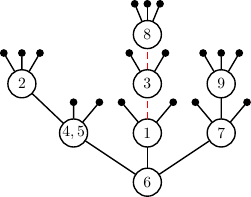}
        \end{center}
        \caption[A $\braceme{9}$-decorated 3-ary tree]{A $[9]$-decorated 3-ary tree.}
        \label{fig_nine_decorated_tree}
      \end{figure}
      \begin{enumerate}
        \item[\precconditionone] The root of $T$ is the $\prec_T$ minimum vertex.
        \item[\precconditiontwo] A node is immediately followed in the $\prec_T$ order by its child of rank $0$.
        \item[\precconditionthree] For vertices $v,w$ each of rank $>0$, $v \prec_T w$ if and only if $\text{left}(v) \prec_T \text{left}(w)$, where $\text{left}(v)$ is the sibling of $v$ with rank one less than the rank of $v$.
      \end{enumerate}
      One can show that the $\prec_T$ order as the unique order satisfying \precconditions. This order is illustrated by example in \figref{fig_ternary_prect}.
      \begin{figure}[t]
        \centering
        \includegraphics[width=.6\textwidth]{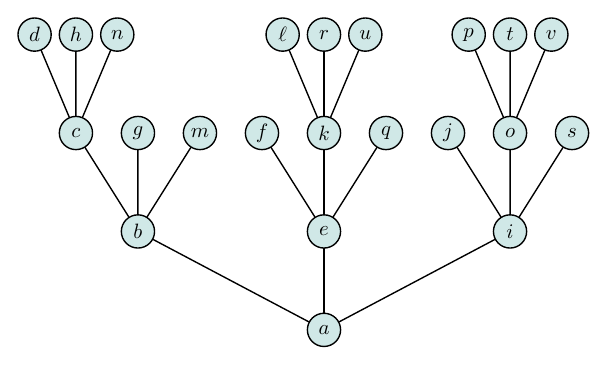}
        \caption[A 3-ary tree example of the $\prec_T$ order]{In this 3-ary tree, $a \prec_T b \prec_T \dots \prec_T v$.} 
        \label{fig_ternary_prect}
      \end{figure}
    \subsubsection{Bijection from \texorpdfstring{\mtrees}{decorated m-ary trees} to \texorpdfstring{$m$-Catalan}{m-Catalan} faces} \label{subsubsec_mcatalan_bijection}
      In this subsection, we will define the bijection between $m$-Catalan faces in $\mathbb{R}^n$ and \mtrees. To define the map, we require some further vocabulary and notation. 
      \begin{definition}
        Let $T$ be an \mtree. A \emphnew{captive node} of $T$ is a node that is connected to its parent by a dashed edge. A \emphnew{free node} is a node that is not a captive node. 
      \end{definition}
      \begin{definition} \label{def_dead_leaves}
        Let $T$ be an \mtree. A \emphnew{dead leaf} of $T$ is a leaf $\ell$ such that there is a captive node in $\text{lsib}(\ell)$. Any vertex that is not a dead leaf is said to be \emphnew{live}. 
      \end{definition}
      \begin{definition}
        Let $T$ be an \mtree.
        \begin{enumerate}
          \item For $i \in [n]$, let $\emphnewmath{v_T(i)}$ denote the unique node of $T$ whose label contains $i$.
          \item For $i \in [n]$ and $s \in [0,m]$, let $\emphnewmath{v^s_T(i)}$ denote the child of $v_T(i)$ of rank $s$.
          \item For $v$ a vertex in $T$, let $\emphnewmath{\nextlive(v)}$ denote the minimum \live vertex of $T$ in the $\prec_T$ order that is greater than or equal to $v$.
        \end{enumerate}
      \end{definition}
      A \emphnew{dashed path} is a path consisting of only dashed edges. For integers $a \leq b$, let $\emphnewmath{[a,b]} := \{a,a+1,\dots,b\}$. Our main result about the faces of the $m$-Catalan arrangement is the following:
      \begin{theorem} \label{thm_mcat_bijection}
        For any \mtree $T$, there exists a unique $m$-Catalan face $\canonicalmcatface$ in $\mathbb{R}^n$ consisting of all points $p = (p_1,\dots,p_n)$ for which the following three conditions hold:
        \begin{enumerate}
          \item[\mcatconditionone] For all $i,j \in [n]$, $p_i \leq p_j$ if and only if $v_T(i) \preceq_T v_T(j)$.
          \item[\mcatconditiontwo] For all $i,j \in [n]$, and $s \in [1,m]$, $p_i < p_j+s$  if and only if $v_T(i) \prec_{T} v^s_T(j)$.
          \item[\mcatconditionthree] For all $i,j \in [n]$, and $s \in [1,m]$, $p_i = p_j+s$ if and only if $v_T(i) = \nextlive(v^s_T(j))$ and there is a dashed path from $v_T(j)$ to $v_T(i)$.
        \end{enumerate}
        Let $\mcatmap$ denote the associated mapping $T \mapsto \canonicalmcatface$ \footnote{The map in \thmref{thm_cat_bijection} is a special case of this map, namely the case $m=1$. We use the same symbol for both, since the value of $m$ may be inferred from context.}. Then $\mcatmap$ is a bijection from \mtrees to $m$-Catalan faces in $\mathbb{R}^n$. Furthermore, an \mtree $T$ has $k$ free nodes if and only if $\mcatmap(T)$ is a face of dimension $k$. 
      \end{theorem}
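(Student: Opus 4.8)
The plan is to prove the theorem in four stages. First, I would show that conditions \mcatconditionone, \mcatconditiontwo, and \mcatconditionthree together prescribe, for every hyperplane $x_i-x_j=s$ with $i,j\in[n]$ and $s\in[0,m]$, exactly one of the relations $<,=,>$ — namely the choice function generalizing \eref{eq_inefunc_one} — and that this prescription is internally consistent. For instance, \mcatconditionthree only forces $p_i=p_j+s$ when $v_T(i)=\nextlive(v^s_T(j))\succeq_T v^s_T(j)$, which is precisely the situation excluded from $p_i<p_j+s$ by \mcatconditiontwo, and \mcatconditionone is the $s=0$ instance rewritten with $\preceq_T$. Consequently, the set of points satisfying \mcatconditionone--\mcatconditionthree is exactly the face carrying this sign pattern, provided that face is nonempty; the uniqueness clause is then automatic, since a face is determined by its sign pattern.

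Second — and this is where I expect the main obstacle to lie — I would establish nonemptiness by exhibiting an explicit point $p\in\canonicalmcatface$. The natural recipe is to attach to each vertex $v$ of $T$ a real ``level'' $\ell(v)$ built from $\rhodot(v)$ plus a small, $\prec_T$-order-respecting perturbation, and to set $p_i:=\ell(v_T(i))$. The identity $\rhodot(v^s_T(j))=\rhodot(v_T(j))+s$ makes $p_j+s$ agree with the level of $v^s_T(j)$ up to the perturbation; the perturbation must be engineered, using properties \precconditionone, \precconditiontwo, and \precconditionthree of $\prec_T$, so that in the borderline case $\rhodot(v_T(i))=\rhodot(v_T(j))+s$ the strict inequality $p_i<p_j+s$ switches precisely at $v_T(i)=v^s_T(j)$, and so that equality $p_i=p_j+s$ is realized exactly along dashed paths — this is where \defref{def_dead_leaves} and $\nextlive$ enter, as a dead leaf must be skipped. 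Checking \mcatconditionone--\mcatconditionthree for this $p$ (one might alternatively induct on $n$ or verify the absence of infeasible cycles in the sign pattern) is the delicate part of the argument.

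Third, I would construct the inverse map $\mcattreeconstruct$ from $m$-Catalan faces to \mtrees and verify that it is a two-sided inverse of $\mcatmap$. Given a face $\canonicalmcatface$, pick $p\in\canonicalmcatface$; by \mcatconditionone the coordinates of $p$ group into blocks forming an ordered set partition $B_1\prec\dots\prec B_r$ of $[n]$, which will be the node labels. The tree is then reconstructed greedily in $\prec_T$ order: reading off the sign pattern of $\canonicalmcatface$, the rank-$s$ child of the node labeled $B_j$ is the $\prec_T$-minimal block (or a fresh leaf) sitting at the prescribed height above $B_j$, and that edge is marked dashed exactly when the corresponding equality $x_i-x_j=s$ holds on $\canonicalmcatface$. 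One then checks that this produces a genuine \mtree in the sense of \defref{def_mtrees} — each node acquires $m+1$ children, at most one dashed, of rank $>0$ — and that $\mcatmap\circ\mcattreeconstruct$ and $\mcattreeconstruct\circ\mcatmap$ are both the identity; the uniqueness of the reconstruction at each step is where \precconditionone--\precconditionthree are used again.

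Finally, the dimension statement. The dimension of a face equals $n$ minus the rank of the linear system of equalities it satisfies. The $s=0$ equalities $x_i=x_j$ cut the dimension from $n$ down to $r$, the number of nodes of $T$. The remaining equalities $x_i=x_j+s$ with $s\geq 1$ occur, by \mcatconditionthree, exactly along dashed paths; since the dashed edges form a forest, each captive node contributes exactly one further independent equation and free nodes contribute none — even accounting for the $\nextlive$-jumps caused by dead leaves — so the equality system has rank $(n-r)+(\#\text{captive nodes})$ and $\canonicalmcatface$ has dimension $r-\#\text{captive nodes}=\#\text{free nodes}$. Once the earlier stages are in place, this is a routine linear-algebra count.
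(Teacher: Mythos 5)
Your plan is sound and its skeleton matches the paper's proof closely: uniqueness via the sign pattern, existence via an explicit point of the form $p_i = \rhodot(v_T(i)) + (\text{small perturbation})$, and surjectivity via a greedy reconstruction of the tree in $\prec_T$ order from the sign pattern of a chosen point. The paper realizes your perturbation by assigning geometrically decaying weights to the \emph{solid internal edges} taken in $\prec_T$ order (dashed edges get weight $0$) and summing along root paths, and realizes your reconstruction via an ``$m$-Catalan code'' of the point together with a sequence of budding trees; the technical heart there is a lemma characterizing exactly which buds are open at each step, which is what makes your phrase ``the $\prec_T$-minimal block sitting at the prescribed height'' non-circular. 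Two genuine differences are worth noting. First, you obtain injectivity by verifying $\mcattreeconstruct \circ \mcatmap = \mathrm{id}$, whereas the paper proves injectivity directly and independently of the inverse construction, by inducting along the $\prec_T$ orders of two trees $T,T'$ satisfying the same conditions and showing their vertex sequences agree; this spares the (already heavy) surjectivity argument from carrying the injectivity load, at the cost of a separate combinatorial induction. Second, your dimension count --- the equality system has rank $(n-\#\text{nodes})+\#\text{captive nodes}$ because the dashed edges form a forest and every equality forced by \mcatconditionthree is a sum of single-dashed-edge equalities --- is arguably cleaner than the paper's, which proves $\dim \geq k$ by varying the perturbation vector $\epsvector$ through a $k$-dimensional neighborhood and $\dim \leq k$ by showing a valid point is determined by its coordinates at the $k$ free nodes. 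Your version does require the (true, but worth stating) observation that a nonempty face is relatively open in the affine subspace cut out by its equalities, so its dimension is exactly $n$ minus that rank.
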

      We will prove \thmref{thm_mcat_bijection} in \secref{sec_proof_mcatalan}. The map $\mcatmap$ can be made very explicit at the level of individual hyperplanes as follows. An $m$-Catalan face may be defined by a choice function 
      \begin{align} 
        \inefunc : \{(i,j,s) \given i,j \in [n], s \in [0,m]\} \to \{-1,0,1\},
      \end{align}
      assigning to each hyperplane a choice of $-1$ ($x_i-x_j<s$), $0$ ($x_i-x_j=s$), or $+1$ ($x_i-x_j>s$), such that the resulting system of equalities and inequalities is non-void. Given $T, i, j$, we define $\inefunc(i,j,0)$ and $\inefunc(i,j,s)$ for $s \in [1,m]$ as:
      \begin{align} \label{eq_m_inefunc_zero}
        \inefunc(i,j,0) &:= 
        \begin{cases} 
          -1 &\padtext{if} v_T(i) \prec_T v_T(j), \\ 
          0 &\padtext{if} v_T(i) = v_T(j), \\ 
          +1 &\lpadtext{otherwise}. 
        \end{cases}  \\
        \inefunc(i,j,s) &:= \label{eq_m_inefunc_one}
        \begin{cases} 
          -1 &\padtext{if} v_T(i) \prec_T v^{s}_T(j), \\ 
          0 & \; \parbox{6cm}{if $v_T(i) = \nextlive(v^{s}_T(j))$ and there is a dashed path from $v_T(j)$ to $v_T(i)$,} \\
          +1 &\lpadtext{otherwise}.
        \end{cases} 
      \end{align}
      The face $\mcatmap(T)$ is the face arising from this choice function $\inefunc$. 
      It is easy to see that this is an equivalent definition of $\mcatmap$ to the one given in \thmref{thm_mcat_bijection}. 
      We also note that $\inefunc$ is unchanged if the first case in \eref{eq_m_inefunc_one} is replaced by ``$-1 \;$ if $v_T(i) \prec_T \nextlive(v^s_T(j))$'', which bears a closer resemblance to the second case. 
      \begin{example}
        Suppose $T$ is the tree in \figref{fig_nine_decorated_tree}. Since $T$ is a $[9]$-decorated 3-ary tree with 6 free nodes, its corresponding $3$-Catalan face lives in $\mathbb{R}^9$ and has dimension $6$.
        We list a selection of the inequalities that define the face corresponding to $T$.
        \begin{itemize}
          \item Using the definition of $\inefunc(i,j,0)$, we obtain the relative order of all $x_i$: 
          \begin{align} 
            x_6 < x_4 = x_5 < x_2 < x_1 < x_7 < x_3 < x_9 < x_8.
          \end{align}
          \item Since $\nextlive(v^{2}_T(1)) = v_T(8)$, with a dashed path from $v_T(1)$ to $v_T(8)$, we have $x_{1}+2=x_8$. Similarly, we also have $x_3+1 = x_8$, and we must also have $x_1+1=x_3$.
          \item On the other hand $\nextlive(v^{1}_T(7)) = v_T(9)$ but with a solid path, so we have $x_7+1<x_9$.
        \end{itemize}
        One point in the face corresponding to $T$ is 
        \begin{align}
        (
          2.3, \;
          1.5, \;
          3.3, \;
          1.4, \;
          1.4, \;
          1.0, \;
          3.1, \;
          4.3, \;
          4.2
        ).
        \end{align}
        For this example, $\inefunc$ is given by the following three tables:
        \begin{center}
          \includegraphics[width=.45\textwidth]{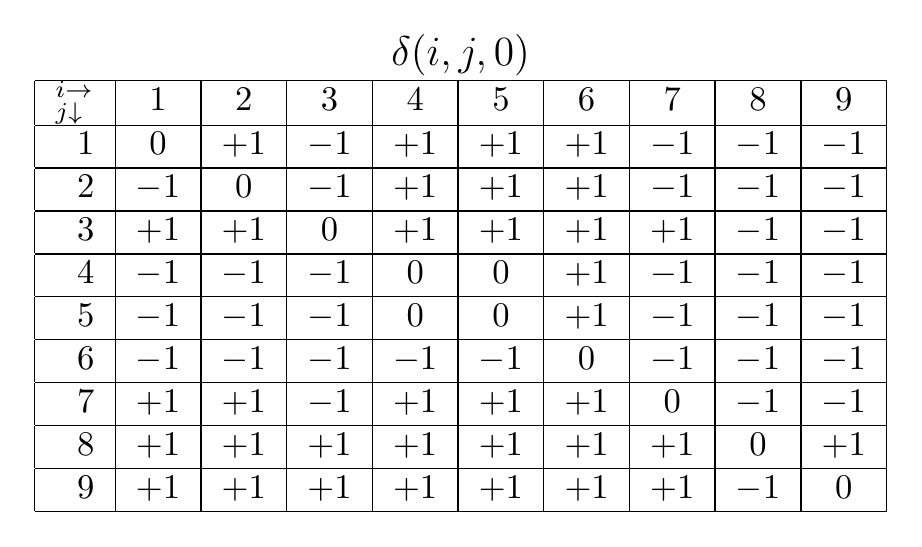}
          \hfill
          \includegraphics[width=.45\textwidth]{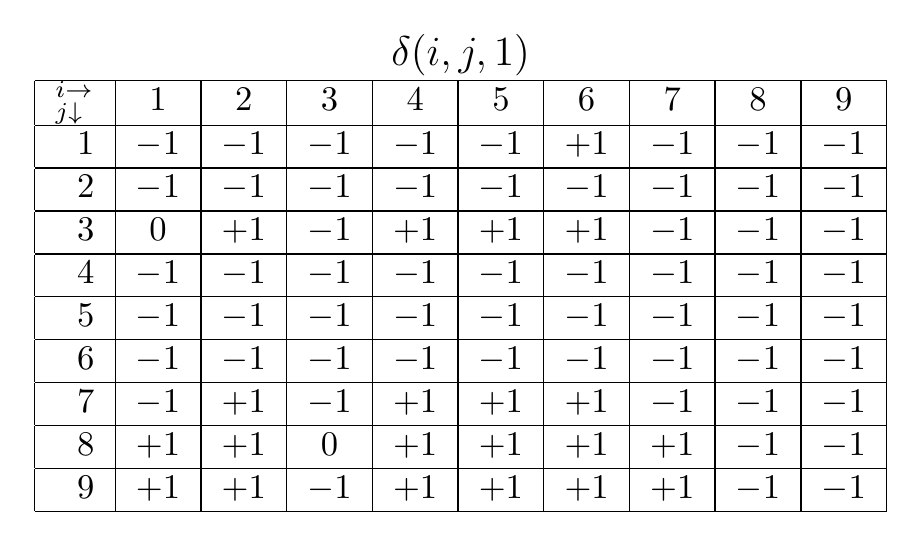}
  
          \includegraphics[width=.45\textwidth]{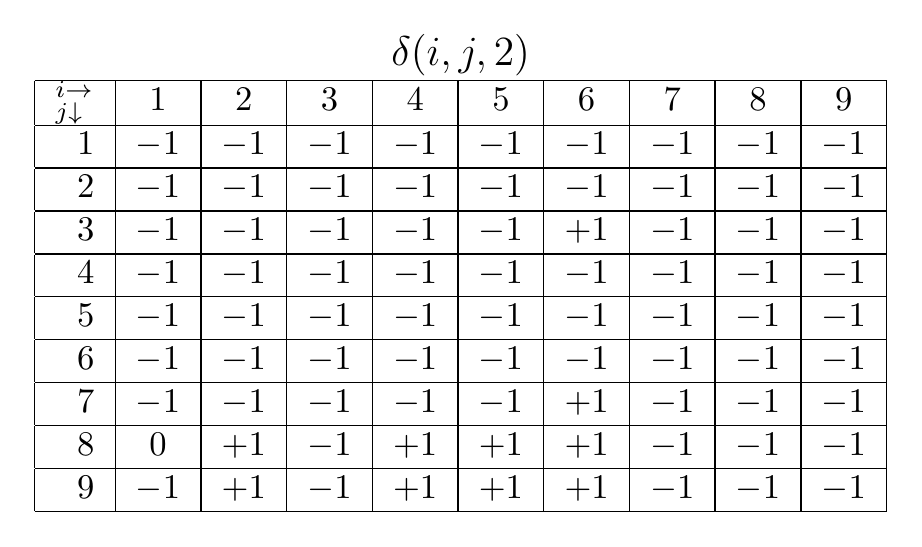}
        \end{center}
      \end{example}
      \begin{remark}
        It is possible to phrase \mcatconditionthree without reference to ``$\nextlive(v^s_T(j))$.'' The \emphnew{drift} of a path of vertices $(v_1,\dots,v_{t})$ is $\sum_{i=1}^{t} |\text{lsib}(v_i)|$. One can show that \mcatconditionthree can be replaced by 
        \begin{itemize}
          \item[\mcatconditionthreeprime] For all $i,j \in [n]$, and $s \in [1,m]$, $p_i = p_j+s$ if and only if there is a dashed path from $v_T(j)$ to $v_T(i)$ whose drift is equal to $s$. 
        \end{itemize}
        This condition will also hold for $s > m$, unlike \mcatconditionthree. The proof that \mcatconditionthree $\iff$ \mcatconditionthreeprime can be done through \lemref{lem_next_of_dead_leaf}. We have chosen to use \mcatconditionthree as stated because of its greater resemblance to the other conditions \mcatconditionone and \mcatconditiontwo. 
      \end{remark}
      \begin{remark} \label{rem_mcat_onedim}
        We highlight the lowest-dimensional case of \thmref{thm_mcat_bijection} ($k=1$). The theorem establishes a bijection between the one-dimensional $m$-Catalan faces in $\mathbb{R}^n$ and \mtrees with exactly one free node. For these trees, all nodes except the root are captive nodes. Such a tree consists of a single dashed path, where the siblings of each node are all leaves. The number of such trees is $$\sum_{i=1}^{n} S(n,i) i! m^{i-1},$$ since the node labels are described by an ordered set partition, and the number of left-sibling leaves for each of the captive nodes may be any number in $[1,m]$.
        This result was first obtained by the finite field method \cite[Thm. 8.3.1, coefficient on $t^{n-1}$]{cite_athanasiadis1996algebraiccombinatorics}. 
      \end{remark}
  
  \subsection{Result for the Shi arrangement} \label{subsec_mshi_bijection}
    We require two additional definitions to state the bijections.
    \begin{definition} \label{def_edge_descent}
      An internal edge of an \mtree is a \emphnew{descent} if $\max A > \min B$, where $A$ is the label of the parent and $B$ is that of the child.
    \end{definition}
    \begin{definition}
      A $[n]$-decorated binary tree is of \emphnew{Shi type} if all right internal edges are descents. 
    \end{definition}
    For example, all of the trees shown in \figref{fig_shi_example} are of Shi type.
    Let $\emphnewmath{\pairsilessthanj} := \{(i,j) \in [n] \times [n] \given i<j\}$.
    Our main result about the Shi faces is the following: 
    \begin{theorem} \label{thm_shi_bijection}
      For any $[n]$-decorated binary tree $T$ of Shi type, there exists a unique Shi face $\canonicalmshiface$ in $\mathbb{R}^n$ consisting of all points $p = (p_1,\dots,p_n)$ for which the following three conditions hold:
      \begin{enumerate}
        \item[\shiconditionone] For all $i,j \in [n], p_i \leq p_j$ if and only if $v_T(i) \preceq_{T} v_T(j)$.
        \item[\shiconditiontwo] For all $(i,j) \in \pairsilessthanj, p_i < p_j+1$ if and only if $v_T(i) \prec_{T} v^1_T(j)$. 
        \item[\shiconditionthree] For all $(i,j) \in \pairsilessthanj, p_i = p_j+1$ if and only if $v_T(i) = v^1_T(j)$ and $v^1_T(j)$ is a captive node. 
      \end{enumerate}
      Let $\mshimap$ denote the associated mapping $T \mapsto \canonicalmshiface$.\footnote{The map in \thmref{thm_shi_bijection} is a special case of this map, namely the case $m=1$. We use the same symbol for both, since the value of $m$ may be inferred from context.} Then $\mshimap$ is a bijection from $[n]$-decorated binary trees of Shi type to Shi faces in $\mathbb{R}^n$.  Furthermore, an $[n]$-decorated binary tree of Shi type $T$ has $k$ free nodes if and only if $\mshimap(T)$ is a face of dimension $k$.
    \end{theorem}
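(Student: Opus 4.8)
The plan is to derive \thmref{thm_shi_bijection} from \thmref{thm_cat_bijection}. The key structural observation is that the Shi arrangement is obtained from the Catalan arrangement by deleting the hyperplanes $x_j - x_i = 1$ for $i<j$; hence the Catalan arrangement refines the Shi arrangement, every Shi face is a disjoint union of Catalan faces, and two Catalan faces lie in the same Shi face exactly when their choice functions $\inefunc$ (as in \eref{eq_inefunc_zero} and \eref{eq_inefunc_one}) agree on every triple $(i,j,0)$ and on every triple $(i,j,1)$ with $i<j$ --- equivalently, they may differ only on the triples $(i,j,1)$ with $i>j$. Transporting this equivalence through the bijection $\mcatmap$ of \thmref{thm_cat_bijection} yields a relation $\sim$ on $[n]$-decorated binary trees whose classes are in bijection with the Shi faces, and the whole theorem reduces to the combinatorial statement that \emph{each $\sim$-class contains exactly one tree of Shi type}.

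Granting this, I would first verify that $\mshimap$ is well defined. For a Shi-type tree $T$, condition \shiconditionone of \thmref{thm_shi_bijection} is identical to \catconditionone of \thmref{thm_cat_bijection}, while \shiconditiontwo and \shiconditionthree are the $i<j$, $s=1$ instances of \catconditiontwo and \catconditionthree; reading \eref{eq_inefunc_one}, conditions \shiconditiontwo and \shiconditionthree (together with \shiconditionone) pin down the value $\inefunc_T(i,j,1)$ for every $i<j$. Hence the locus cut out by \shiconditionone, \shiconditiontwo, and \shiconditionthree is exactly the Shi face containing the nonempty Catalan face $\mcatmap(T)$ --- in particular it is a single nonempty Shi face, and $\mshimap(T)\supseteq\mcatmap(T)$. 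Surjectivity and injectivity of $\mshimap$ then follow from the above claim, and the statement that the number of free nodes equals the dimension follows once one knows $\dim\mshimap(T) = \dim\mcatmap(T)$. For this I would show $\mcatmap(T)$ and $\mshimap(T)$ have the same affine span: the only equalities cutting out $\mcatmap(T)$ but not $\mshimap(T)$ have the form $x_i = x_j + 1$ with $i>j$, coming from a dashed right edge $v_T(j)\to v^1_T(j) = v_T(i)$; since $T$ is of Shi type this edge is a descent, so there are $i'$ in the label of $v_T(i)$ and $j'$ in the label of $v_T(j)$ with $i'<j'$, whence $x_{i'} = x_{j'}+1$ is already a defining equality of $\mshimap(T)$ by \shiconditionthree, and $x_i = x_{i'}$, $x_j = x_{j'}$ hold by \shiconditionone, so the equality $x_i = x_j + 1$ is redundant.

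It remains to prove the above claim, which is the heart of the matter. For existence, I would start from an arbitrary tree in a given $\sim$-class and, while some right internal edge fails to be a descent, apply a local ``straightening'' move --- a controlled rotation/re-attachment of the offending right subtree, designed to leave the induced ordered set partition and all values $\inefunc(i,j,1)$ with $i<j$ unchanged --- together with a monovariant (for instance one built from the $\prec_T$-positions of the endpoints of the non-descent right edges) that strictly decreases, so that the process terminates at a tree of Shi type. For uniqueness, I would show the Shi-type representative of a $\sim$-class is rigid: from the ordered set partition together with the data $(\inefunc(i,j,1))_{i<j}$, the requirement that every right internal edge be a descent forces, working down the tree, which blocks are joined by right edges, which of those edges are dashed, and where each left subtree attaches. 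I expect the main obstacle to be precisely this straightening-and-rigidity analysis --- defining the move, showing it preserves the $\sim$-class, proving termination, and establishing the uniqueness rigidity; it is the $m=1$ specialization of the corresponding $m$-Shi argument and is the face-level analogue of Bernardi's treatment of regions in \cite{cite_bernardi2018deformationsof}.
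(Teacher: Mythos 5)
Your reduction is exactly the paper's: the proof of \thmref{thm_mshi_bijection} (of which this theorem is the $m=1$ case) is organized around precisely your ``core claim,'' namely that each Shi face contains exactly one Catalan face whose tree is of Shi type, with uniqueness of that representative giving injectivity of $\mshimap$ and existence giving surjectivity. Your well-definedness argument matches the paper's, and your dimension argument is a legitimate variant: the paper instead proves (\propref{prop_mshi_top_dimension}) that among the Catalan faces inside a given Shi face, the one with a Shi-type tree maximizes the number of free nodes (its captive nodes form a subset of the captive nodes of every other tree in the class), whereas you argue directly that the defining equalities of $\mshimap(T)$ already imply the extra Catalan equalities $x_i=x_j+1$ with $i>j$ via the descent condition. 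Both work.

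The gap is the one you flag yourself: the core claim is only a plan, and it is where essentially all of the work lies. For existence, the paper does not iterate a straightening move but runs an extremal argument: among the Catalan faces contained in the given Shi face, pick one maximizing $\shirank$, the number of strict inequalities $x_i+1>x_j$ with $i<j$. If its tree had a right internal edge that is an ascent ($a<b$ for all $a\in A$ in the parent label and $b\in B$ in the child label), then the vertex immediately preceding the child $w$ in the $\prec_T$ order is a live leaf $\ell$ (this uses \lemref{lem_next_of_dead_leaf}), and re-attaching $w$'s subtrees there (the left subtree of $w$ replaces $w$; a new node labeled $B$ at $\ell$ receives the rest) flips exactly the inequalities $x_a+1\leq x_b$ to $x_a+1>x_b$, all of which are non-Shi hyperplanes since $a<b$, contradicting maximality. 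Verifying that only these relations change, and that the surgered tree is a valid decorated tree, is the real content of your ``move preserves the class'' step. For uniqueness, the needed rigidity is not a top-down reconstruction but an induction along the $\prec_T$ order showing that the $i$th vertices of two candidate trees agree; the delicate case is when one is a leaf and the other a node, where one must first show that $u:=\nextnode(v_i)$ coincides with $v_T(b)$, so that the offending edge is a right edge and hence a descent, which produces a pair $b^*<a^*$ to which the restricted ($i<j$) condition \shiconditiontwo can be applied for the contradiction. Without these two arguments the proposal does not yet constitute a proof.
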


    Equivalently, one could simply define $\mshimap(T)$ to be the unique Shi face in $\mathbb{R}^n$ that contains all points in the Catalan face $\mcatmap(T)$. 
    \thmref{thm_shi_bijection} follows from setting $m=1$ in \thmref{thm_mshi_bijection}, our result for the $m$-Shi arrangement.
    \figref{fig_shi_example} shows the bijection $\mshimap$ for the 21 Shi faces of dimension $2$ in $\mathbb{R}^3$. 

    As we did for $\mcatmap$, the map $\mshimap$ can be made explicit at the level of individual hyperplanes. A Shi face may be defined by a choice function
    \begin{align} \label{eq_delta_shin}
      \delta_{\text{Shi}_n} : \{(i,j,s) \given 1 \leq i < j \leq n, s \in \{0,1\} \} \to \{-1,0,1\}.
    \end{align}
    Let $\delta_{\text{Cat}_n}$ be the choice function corresponding to $T$ given by \eref{eq_inefunc_zero} and \eref{eq_inefunc_one}. Then the choice function $\delta_{\text{Shi}_n}$ for the face $\mshimap(T)$ is simply the restriction of $\delta_{\text{Cat}_n}$ to the domain in \eref{eq_delta_shin}. 
    \begin{remark} \label{rem_shi_onedim}
      We highlight the lowest-dimensional case of \thmref{thm_shi_bijection} ($k=1$). 
      Specialized to this case, \thmref{thm_shi_bijection} establishes a bijection between the one-dimensional Shi faces in $\mathbb{R}^n$ and $[n]$-decorated binary trees of Shi type with exactly one free node. 
      As in \remref{rem_cat_onedim}, these trees must be right-paths with all nodes joined by dashed edges (with all left-children being leaves). 
      Furthermore, all right-edges must be descents. 
      Each of these paths is uniquely defined by the labels of its nodes, which form the ascending runs of any permutation of $[n]$. 
      It follows that the number of one-dimensional Shi faces in $\mathbb{R}^n$ is $n!$.
    \end{remark}
  \subsection{Result for the \texorpdfstring{$m$-Shi}{m-Shi} arrangement}
    \begin{definition}
      An \mtree is of \emphnew{Shi type} if all internal edges of rank $m$ are descents. 
    \end{definition}
    Our main result about the $m$-Shi faces is the following: 
    \begin{theorem} \label{thm_mshi_bijection}
      For any \mtree $T$ of Shi type, there exists a unique $m$-Shi face $\canonicalmshiface$ in $\mathbb{R}^n$ consisting of all points $p = (p_1,\dots,p_n)$ for which the following five conditions hold:
      \begin{enumerate}
        \item[\mshiconditionone] For all $i,j \in [n], p_i \leq p_j$ if and only if $v_T(i) \preceq_{T} v_T(j)$.
        \item[\mshiconditiontwo] For all $i,j \in [n]$ and $s \in [1,m-1]$, $p_i < p_j+s$ if and only if $v_T(i) \prec_{T} v^s_T(j)$. 
        \item[\mshiconditionfour] For all $(i,j) \in \pairsilessthanj, p_i < p_j+m$ if and only if $v_T(i) \prec_{T} v^m_T(j)$.
        \item[\mshiconditionthree] For all $i,j \in [n]$ and $s \in [1,m-1]$, $p_i = p_j+s$ if and only if $v_T(i) = \nextlive(v^s_T(j))$ and there is a dashed path from $v_T(j)$ to $v_T(i)$. 
        \item[\mshiconditionfive] For all $(i,j) \in \pairsilessthanj, p_i = p_j+m$ if and only if $v_T(i) = \nextlive(v^m_T(j))$ and there is a dashed path from $v_T(j)$ to $v_T(i)$. 
      \end{enumerate}
      Let $\mshimap$ denote the associated mapping $T \mapsto \canonicalmshiface$. Then $\mshimap$ is a bijection from \mtrees of Shi type to $m$-Shi faces in $\mathbb{R}^n$.  Furthermore, an \mtree of Shi type $T$ has $k$ free nodes if and only if $\mshimap(T)$ is a face of dimension $k$.
    \end{theorem}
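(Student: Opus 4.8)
The plan is to deduce \thmref{thm_mshi_bijection} from \thmref{thm_mcat_bijection} by realizing the $m$-Shi arrangement inside the $m$-Catalan arrangement. The starting observation is that the $m$-Shi arrangement is obtained from the $m$-Catalan arrangement by deleting exactly the hyperplanes $x_i-x_j=m$ with $i>j$ (equivalently, the hyperplanes $x_i-x_j=-m$ with $i<j$), while every other $m$-Catalan hyperplane $x_i-x_j=s$, $s\in[0,m]$, survives. Consequently each $m$-Shi face is a disjoint union of $m$-Catalan faces, the map sending an $m$-Catalan face $C$ to the unique $m$-Shi face containing it is a well-defined surjection, and two $m$-Catalan faces have the same image exactly when they are cut the same way by every surviving hyperplane. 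The first step is to check that, for a fixed \mtree $T$, conditions \mshiconditions are precisely conditions \mcatconditionone, \mcatconditiontwo, and \mcatconditionthree of \thmref{thm_mcat_bijection} for the same $T$, restricted to the triples $(i,j,s)$ indexing a surviving hyperplane; in particular, the restriction to $(i,j)\in\pairsilessthanj$ in \mshiconditionfour and \mshiconditionfive is exactly the restriction away from the deleted hyperplanes. It follows that, for any \mtree $T$ of Shi type, the locus cut out by \mshiconditions is nonempty --- it contains $\mcatmap(T)$ --- and equals the unique $m$-Shi face $\canonicalmshiface$ containing $\mcatmap(T)$. This gives existence and uniqueness of $\canonicalmshiface$, shows $\mshimap$ is a well-defined map from \mtrees of Shi type to $m$-Shi faces, and identifies $\mshimap(T)$ as the $m$-Shi face that contains all of $\mcatmap(T)$.

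The substance of the theorem then reduces to a single ``canonical representative'' lemma: every $m$-Shi face $F$ contains exactly one $m$-Catalan face whose $\mcatmap$-preimage is of Shi type. Granting it, injectivity of $\mshimap$ follows because $\mshimap(T)=\mshimap(T')$ forces $\mcatmap(T)$ and $\mcatmap(T')$ into a common $m$-Shi face, so the uniqueness half of the lemma together with injectivity of $\mcatmap$ gives $T=T'$; and surjectivity follows because any $m$-Shi face is a union of $m$-Catalan faces, hence contains some $\mcatmap(T_0)$, and the existence half of the lemma produces a Shi-type tree $T$ with $\mcatmap(T)\subseteq F$. For the dimension statement I would use that the dimension of any face of the $m$-Catalan or $m$-Shi arrangement equals the number of connected components of its \emph{equality graph} on vertex set $[n]$, the graph joining $i$ to $j$ whenever the face lies on some hyperplane $x_i-x_j=c$. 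Because $\mshimap(T)\supseteq\mcatmap(T)$, the equality graph of $\mshimap(T)$ is a spanning subgraph of that of $\mcatmap(T)$, differing only by edges coming from deleted hyperplanes that contain $\mcatmap(T)$; the rigidity proved below shows every such extra edge already joins two vertices of a common component, so the component partitions agree and $\dim\mshimap(T)=\dim\mcatmap(T)$, which equals the number of free nodes of $T$ by \thmref{thm_mcat_bijection}.

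What remains is the lemma, which is where the ``Shi type'' hypothesis is used and which I expect to be the main obstacle. For \emph{existence} the idea is a straightening procedure: given any \mtree, as long as some internal edge of rank $m$ is not a descent, apply an explicit local move that rearranges the offending child together with its subtree (possibly relabeling nearby nodes) so that (i) a suitable monotone statistic strictly decreases --- for instance a lexicographic statistic on the labels read along the rank-$m$ edges, or a weighted count of non-descent rank-$m$ edges --- and (ii) the resulting $m$-Catalan face is cut the same way by every surviving hyperplane, so it stays inside the same $m$-Shi face (the only affected hyperplanes being deleted ones); the procedure terminates at a Shi-type tree lying in the prescribed $F$. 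For \emph{uniqueness}, and for the rigidity statement used above, I would show that a Shi-type tree meets each deleted hyperplane in a way forced by the surviving data. The key point: if $\mcatmap(T)\subseteq\{x_a-x_b=m\}$ with $a>b$, then by \mcatconditionthree --- or by the equivalent \mcatconditionthreeprime together with an analysis of dashed paths and $\nextlive$ of the kind in \lemref{lem_next_of_dead_leaf} --- there is a dashed path of drift $m$ from $v_T(b)$ to $v_T(a)$. If this path has length at least $2$, each of its edges has rank strictly less than $m$ and hence contributes an equality forced by \mshiconditionthree, chaining through the intermediate node labels to connect $a$ and $b$ in the equality graph; if it has length $1$, the single rank-$m$ dashed edge is a descent since $T$ is of Shi type, which produces a pair $i^{\ast}<j^{\ast}$ with $i^{\ast}$ in the child's label and $j^{\ast}$ in the parent's, and $p_{i^{\ast}}=p_{j^{\ast}}+m$ is then forced by \mshiconditionfive, again connecting $a$ and $b$. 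Thus $x_a-x_b=m$ is implied by the surviving equations of $F$, and a short further argument pins down the side of $\mcatmap(T)$ on each deleted hyperplane that $F$ does not force, so $\mcatmap(T)$ --- hence $T$ --- is determined by $F$. Making the local move precise, verifying it preserves the $m$-Shi face and that the statistic forces termination, and completing the residual sidedness analysis are the technical core; these will be carried out in \secref{sec_proof_mshi}.
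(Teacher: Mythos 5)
Your overall skeleton agrees with the paper's: reduce everything to the ``core claim'' that each $m$-Shi face contains exactly one $m$-Catalan face whose tree is of Shi type; obtain existence, uniqueness, and well-definedness of $\mshimap$ from \thmref{thm_mcat_bijection} by restricting to the surviving hyperplanes; and prove the existence half of the core claim by a local straightening move governed by a monotone statistic. That last part is essentially what the paper does in \subsecref{subsec_mshi_proof_surjective} (it selects a maximizer of the count of inequalities $x_i+m>x_j$ over $(i,j)\in\pairsilessthanj$ and shows that a non-descent rank-$m$ edge would permit a move strictly increasing that count). Your dimension argument via the equality graph is a legitimate alternative to the paper's \propref{prop_mshi_top_dimension}, which instead compares numbers of captive nodes across the trees of the Catalan pieces of a given Shi face.

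The genuine gap is in the uniqueness half of the core claim, i.e.\ injectivity of $\mshimap$. Your rigidity argument only controls the \emph{equalities} on the deleted hyperplanes $x_a-x_b=m$, $a>b$: it shows that when a Shi-type Catalan face lies on such a hyperplane, that equality is already implied by the surviving ones. It says nothing about which \emph{side} of a deleted hyperplane the face lies on when it is not contained in it, and a priori two distinct Shi-type trees could yield Catalan faces inside the same Shi face that agree on every surviving hyperplane and on the forced equalities but sit on opposite sides of some $x_a-x_b=m$ with $a>b$. Ruling this out is not a ``short further argument''; it is the substance of the paper's injectivity proof (\subsecref{subsec_mshi_proof_injective}), which reconstructs the tree vertex by vertex in the $\prec_T$ order and, in the delicate case where the $i$th vertex is a leaf in one tree and a node in the other with the relevant rank equal to $m$, uses the descent condition to produce a pair $(b^*,a^*)$ with $b^*<a^*$ contradicting \mshitreeconditionfour. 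Convexity does not rescue you either: if two full-dimensional Shi-type pieces sat on opposite sides of a deleted hyperplane inside the (convex) Shi face, the intermediate Catalan face on that hyperplane need not be of Shi type, so no immediate contradiction arises. As written, injectivity of $\mshimap$ is unproven; you must either carry out an induction of the paper's kind or supply a genuinely new argument for the sidedness.
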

    Equivalently, one could simply define $\mshimap(T)$ to be the unique $m$-Shi face in $\mathbb{R}^n$ that contains all points in the $m$-Catalan face $\mcatmap(T)$. 
    We will use this latter formulation in the proof of \thmref{thm_mshi_bijection} in \secref{sec_proof_mshi}. 
    The key fact underlying \thmref{thm_mshi_bijection} is that for every $m$-Shi face $\canonicalmshiface$, there is one and only one $m$-Catalan face contained by $\canonicalmshiface$ such that $\mcatmapinv(\canonicalmcatface)$ is of Shi type.  

    As we did for $\mcatmap$, the map $\mshimap$ can be made explicit at the level of individual hyperplanes. An $m$-Shi face may be defined by a choice function
    \begin{align}
      \delta_{\text{Shi}_n} : \{(i,j,s) \given 1 \leq i < j \leq n, s \in [-m+1,m] \} \to \{-1,0,1\}.
    \end{align}
    Let $\delta_{\text{Cat}_n}$ be the choice function corresponding to $T$ given by \eref{eq_m_inefunc_zero} and \eref{eq_m_inefunc_one}. Then the face $\mshimap(T)$ is given by the following $\delta_{\text{Shi}_n}$. For $i,j \in \pairsilessthanj$ and $s \in [-m+1,m]$,
    \begin{align}
      \delta_{\text{Shi}_n}(i,j,s) = \begin{cases} \delta_{\text{Cat}_n}(i,j,s) & \padtext{if} s\geq0 \\ -\delta_{\text{Cat}_n}(j,i,-s) & \padtext{if} s < 0. \end{cases}
    \end{align}
    Put another way, we can re-use the choice function for the $m$-Catalan face corresponding to $T$, and then ``forget'' the choices for hyperplanes that are not in the $m$-Shi arrangement.
    \begin{remark} \label{rem_mshi_onedim}
      We highlight the lowest-dimensional case of \thmref{thm_mshi_bijection} ($k=1$). Specialized to this case, \thmref{thm_mshi_bijection} establishes a bijection between the one-dimensional $m$-Shi faces in $\mathbb{R}^n$ and \mtrees of Shi type with exactly one free node. As in \remref{rem_mcat_onedim}, all nodes except the root are captive nodes, so the tree must be simply a path of captive nodes each with siblings that are all leaves. Furthermore, any internal edges of rank $m$ must be descents. One can show that the number of such trees is 
      \begin{align} \label{eq_mshi_onedim_formula}
        m^{n-1} n!,
      \end{align}
      which we leave as an exercise for the reader. It follows that the number of one-dimensional $m$-Shi faces in $\mathbb{R}^n$ is given by \eref{eq_mshi_onedim_formula}. This striking formula also follows easily from \cite[Thm. 8.2.1]{cite_athanasiadis1996algebraiccombinatorics}, which is obtained using the finite field method. This formula has been further refined in \cite[Cor. 4.3]{cite_ehrenborg2019countingfaces} via a recurrence relation. This result will reappear later as a consequence of our general bijection from trees of Shi type to certain functions $f: [n-1] \to [mn+1]$ in \subsecref{subsec_mshi_everything_results}.   
    \end{remark}

\section{Proof that \texorpdfstring{$\mcatmap$}{map for m-Catalan faces} is a bijection} \label{sec_proof_mcatalan}
  In this section we will prove \thmref{thm_mcat_bijection}, which implies \thmref{thm_cat_bijection}.
  Throughout this section, $T$ is a given \mtree.
  We separate this proof into five distinct elements: existence and uniqueness of the $m$-Catalan face, injectivity and surjectivity of $\mcatmap$, and the grading by dimension. 
  From our point of view, the hardest element to prove is surjectivity, which will require several additional definitions and lemmas. 

  Before moving to these proofs, we record a useful lemma, which will be used in the following sections, and again later in \subsecref{subsec_mshi_proof_surjective}.
  \begin{lemma} \label{lem_next_of_dead_leaf}
    Let $R$ be an \mtree and let $\ell$ be a dead leaf of $R$. Let $v,k$ be such that $\ell$ is the $k$th right-sibling of a captive node $v$, meaning the rank of $\ell$ is $k$ more than the rank of $v$. Then the vertex immediately following $\ell$ in the $\prec_R$ order is the child of $v$ of rank $k$. 
  \end{lemma}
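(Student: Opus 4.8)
The plan is to prove this directly from the combinatorial characterization of the $\prec_R$ order via the three properties \precconditionone, \precconditiontwo, \precconditionthree, which were stated to characterize $\prec_R$ uniquely. Set notation: let $p$ be the parent of the captive node $v$, let $r$ be the rank of $v$, and let $w = v^k_?$ — more precisely, since $\ell$ has rank $r+k$ and is a sibling of $v$, let $u$ be the sibling of $p$'s children... wait, I need to be careful: $v$ and $\ell$ are siblings (both children of the same parent $p$), with $\mathrm{rank}(\ell) = \mathrm{rank}(v) + k = r+k$. The vertex we must identify as $\nextnode$ of $\ell$ is the child of $v$ of rank $k$; call it $c := v^k_R(\cdot)$, i.e. the rank-$k$ child of $v$. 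Note $\rhodot(c) = \rhodot(v) + k$ since we descend from the root to $v$ and then take one rank-$k$ edge, while $\rhodot(\ell) = \rhodot(p) + (r+k)$ and $\rhodot(v) = \rhodot(p) + r$, so indeed $\rhodot(\ell) = \rhodot(v) + k = \rhodot(c)$. Good — $\ell$ and $c$ sit at the same $\rhodot$-level, which is what makes the claim plausible.

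First I would handle the base case $k=1$. Here $\ell$ is the immediate right-sibling of $v$, i.e. $\mathrm{left}(\ell) = v$ in the notation of \precconditionthree, and $c = v^1_R(\cdot)$ is the rank-$1$ child of $v$; note $\mathrm{left}(c) = v^0_R(\cdot)$, the rank-$0$ child of $v$. By \precconditiontwo, the vertex immediately following $v$ in $\prec_R$ is $v^0_R(\cdot) = \mathrm{left}(c)$. Now apply \precconditionthree: $\ell$ and $c$ both have rank $>0$, and $\ell$ immediately follows $v = \mathrm{left}(\ell)$... hmm, more carefully: I claim $c$ immediately follows $\ell$. Suppose not; then either some vertex $z$ lies strictly between, or $c \prec_R \ell$. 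Using \precconditionthree in the form "$x \prec_R y \iff \mathrm{left}(x) \prec_R \mathrm{left}(y)$" for rank-$>0$ vertices, together with \precconditiontwo relating $v$ to $\mathrm{left}(c)$, and the fact that $\ell$ immediately follows $v$ (which must itself be argued — actually $\ell$ need not immediately follow $v$ in general). Let me restructure: the cleanest route is to argue that for any vertex $z$ with $v \prec_R z \preceq_R \ell$ other than those ruled out, one derives a contradiction by descending through $\mathrm{left}(\cdot)$. Since $v$ is a captive node, it is a node, so it genuinely has children, and $\mathrm{left}$ is well-defined on the relevant rank-$>0$ vertices.

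Then I would run the induction on $k$: assuming the rank-$(k-1)$ right-sibling $\ell'$ of $v$ has $\nextnode(\ell') = $ the rank-$(k-1)$ child $c'$ of $v$, I peel off one $\mathrm{left}$ step. We have $\mathrm{left}(\ell) = \ell'$ and $\mathrm{left}(c) = c'$; both $\ell, c$ have rank $>0$; so by \precconditionthree, the interval structure of $\prec_R$ at the level above transfers down — specifically, if some $z$ satisfied $\ell \prec_R z \prec_R c$, then either $z$ has rank $0$ (handled by \precconditiontwo, since a rank-$0$ vertex immediately follows its parent, and one checks the parent can't fall in the relevant range) or $z$ has rank $>0$ and $\mathrm{left}(z)$ would satisfy $\ell' \prec_R \mathrm{left}(z) \prec_R c'$, contradicting $\nextnode(\ell') = c'$. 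One also has to exclude the possibility $c \preceq_R \ell$: that would force (via $\mathrm{left}$, using rank $>0$ of both, or via the $\rhodot$ tie-break at the same $\rhodot$-level) $c' \preceq_R \ell'$, again contradicting the inductive hypothesis.

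The main obstacle I anticipate is the bookkeeping around rank-$0$ vertices interleaved in the $\prec_R$ order: \precconditiontwo inserts each node's rank-$0$ child immediately after it, and these don't have a $\mathrm{left}$ sibling of smaller rank, so the clean "apply $\mathrm{left}$ and induct" argument needs a separate small lemma showing no rank-$0$ vertex can land strictly between $\ell$ and $c$ — i.e. that the parent of such a rank-$0$ vertex would itself have to lie in $(\ell, c)$ or coincide with $\ell$, and then a rank count via $\rhodot$ rules it out (a rank-$0$ child $z$ of a node $q$ has $\rhodot(z) = \rhodot(q)$, and $\rhodot(\ell) = \rhodot(c) = \rhodot(v)+k$, forcing $\rhodot(q) = \rhodot(v)+k$ as well, after which one checks $q$ cannot be squeezed in). Once that interleaving issue is dispatched, the induction itself is routine.
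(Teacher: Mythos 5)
Your plan is correct and follows essentially the same route as the paper's proof: induction on $k$, with the base case handled by \precconditiontwo and \precconditionthree, and any interloper $z$ with $\ell \prec_R z \prec_R c$ eliminated by applying $\mathrm{left}(\cdot)$ when $z$ has rank $>0$ and by repeatedly replacing $z$ with its parent when it has rank $0$ (the process terminates precisely because $\ell$ is a leaf, which dispatches the bookkeeping worry you raise at the end).
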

  \begin{proof}
    We use induction on $k$, starting with $k=1$. Recall from \precconditionone that the vertex immediately following $v$ is its child of rank $0$. Letting $v^1$ be the child of $v$ of rank $1$, it then follows from \precconditionthree that $\ell \prec_R v^1$. Now suppose there exists a vertex $u$ such that $\ell \prec_R u \prec_R v^1$. Either $u$ has rank $>0$, in which case we may apply \precconditionthree to obtain a vertex strictly between $v$ and $v^0$, which is impossible by \precconditionone, or $u$ has rank $0$ and we may replace $u$ by its parent, repeating until $u$ has rank $>0$, all the while maintaining $\ell \prec_R u \prec_R v^1$ by \precconditiontwo. We must eventually replace $u$ by a node of rank $>0$, since $\ell \prec_R u$ and $\ell$ is a leaf (here we have used \precconditionone). Thus, in either case we eventually reach a contradiction. The inductive step for general $k$ follows by analogous reasoning.
  \end{proof}
  \lemref{lem_next_of_dead_leaf} can be used to give an explicit description of $\nextlive(v)$ for any vertex $v$. If $v$ is \live, then $\nextlive(v)=v$. If $v$ is dead, then we may apply \lemref{lem_next_of_dead_leaf} to advance the $\prec_R$ order by one vertex, and repeat as necessary. We omit the details, since this is not needed for our purposes. 

  \subsection{Existence, uniqueness, and grading by dimension} \label{subsec_mcat_proof_existence}
    In this subsection, we will prove the existence and uniqueness of the $m$-Catalan face corresponding to $T$, and that this face has dimension $k$, where $k$ is the number of free nodes of $T$. 

    We first point out that uniqueness is trivial, since the hyperplanes of the $m$-Catalan arrangement consist of all those of the form $x_i=x_j+s$ for $i,j \in [n]$ and $s \in [0,m]$. If two $m$-Catalan faces are on the same side of all $m$-Catalan hyperplanes then they must be the same face. 

    We will show existence and dimension grading at the same time.
    We say a point $p \in \mathbb{R}^n$ is \emphnew{valid} (for $T$) if it satisfies conditions \mcatconditions from \thmref{thm_mcat_bijection}. 
    To show existence, it is enough to construct a single valid point $p \in \mathbb{R}^n$, since whichever $m$-Catalan face contains $p$ will satisfy the requirements of \thmref{thm_mcat_bijection}. 
    Fix a vector of positive reals $\epsvector = (\epsilon_0, \epsilon_1, \dots, \epsilon_{k-1}) \in \mathbb{R}_{>0}^k$ such that $\epsilon_0 < 1/2$ and for all $q \in [k-1]$ we have $\epsilon_q < \frac{\epsilon_{q-1}}{2}$. 

    Let $E$ be the set of internal edges of $T$. 
    For $e \in E$, let $\text{child}(e)$ be the endpoint of $e$ that is further from the root. 
    For $e,e' \in E$ say $e \prec_T e'$ if $\text{child}(e) \prec_T \text{child}(e')$. 
    Since $T$ has $k$ free nodes, $T$ has $k-1$ solid internal edges (every free node except the root is the child endpoint of a solid internal edge). 
    Let $e_1 \prec_T \dots \prec_T e_{k-1}$ be the internal solid edges of $T$, and define $\edgeweight: E \to \mathbb{R}$ by 
    \begin{align} \label{eq_cat_existence_edgeweight}
      \edgeweight(e) := \begin{cases} \epsilon_q & \text{if} \; e = e_q, \\ 0 & \text{otherwise}. \end{cases}
    \end{align}
    For $i \in [n]$, let $\rho(i)$ be the tuple of edges in $E$ that appear on the path from the root of $T$ to $v_T(i)$. We define $\rhodot(i) := \rhodot(v_T(i))$, which was defined in \subsubsecref{subsubsec_decorated_mary_trees}. For the point $p$, we set 
    \begin{align} \label{eq_mcat_existence_point}
      p_i := \epsilon_0 + \rhodot(i) + \sum_{e \in \rho(i)} \edgeweight(e).
    \end{align}
    It remains to show that $p$ is valid. We show $p$ satisfies \mcatconditiontwo and \mcatconditionthree, and leave the similar verification of \mcatconditionone to the reader. 

    Let $i,j,s$ be given, and assume $v_T(i) \prec_{T} v^s_T(j)$. There are three cases arising from the definition of $\prec_T$:

    \caseif{A}{$\rhodot(i) < \rhodot(v^s_T(j))$.}
      Since $\rhodot(v^s_T(j)) = \rhodot(j)+s$, we have $\rhodot(i) - \rhodot(j) \leq  s-1$.
      Therefore from \eref{eq_mcat_existence_point} we have
      \begin{align}
        p_i - p_j &= \rhodot(i) - \rhodot(j) + \sum_{e \in \rho(i)} \edgeweight(e) - \sum_{e \in \rho(j)} \edgeweight(e) \\
        &\leq s-1 + \sum_{q=1}^{k-1} \epsilon_q < s,
      \end{align}
      because it is clear from the definition of $\epsilon$ that $\sum_{q=1}^{k-1} \epsilon_q < 1$.

    \caseif{B}{$\rhodot(i) = \rhodot(v^s_T(j))$ and $\rho(i)$ is a proper prefix of $\rho(v^s_T(j))$.}
      This case is not possible, since $s>0$. Explicitly, if $\rho(i)$ is a proper prefix of $\rho(v^s_T(j))$, then $\rhodot(i) \leq \rhodot(j)$. Since $\rhodot(v^s_T(j)) = \rhodot(j)+s$ and $s>0$, we have strictly $\rhodot(i) < \rhodot(v^s_T(j))$.

    \caseif{C}{$\rhodot(i) = \rhodot(v^s_T(j))$ and the first difference between $\rho(i)$ and $\rho(v^s_T(j))$ is $E_b$ in $\rho(i)$ and $E_a$ in $\rho(v^s_T(j))$ with $a<b$.}
      First, we claim that this edge $E_a$ in $\rho(v^s_T(j))$ must be an internal edge, say $e_{\ell}$. This is because if $E_a$ were not an internal edge, then since $a<b$, we would have strictly $\rhodot(i) > \rhodot(v^s_T(j))$. 

      Since $\rhodot(i) = \rhodot(v^s_T(j)) = \rhodot(j)+s$, we have
      \begin{align} \label{eq_mcatconditiontwo_check}
        p_i - p_j &= 
        s + \sum_{e \in \rho(i)} \edgeweight(e) - \sum_{e \in \rho(j)} \edgeweight(e) 
        \leq 
        s - \epsilon_{\ell} + \sum_{e \in \rho(i) \smallsetminus \rho(j)} \edgeweight(e),
      \end{align}
      where the inequality is obtained by cancellation of the common $\edgeweight$ terms, and then replacing the negative sum by only one of its terms (the notation $e \in \rho(i) \smallsetminus \rho(j)$ should be understood to mean $e$ is in $\rho(i)$ and $e$ is not in $\rho(j)$). Next, we claim that, all edges in $\rho(i) \smallsetminus \rho(j)$ are greater than $e_{\ell}$ in the $\prec_T$ order. This is immediately true for the first edge $E_b \in \rho(i) \smallsetminus \rho(j)$, since $a<b$, and so it will indeed be true for all edges that follow $E_b$. Therefore the $\sum$ in the last expression of \eref{eq_mcatconditiontwo_check} is less than or equal to $\sum_{q=\ell+1}^{k-1} \epsilon_q$. Thus,
      \begin{align}
        p_i-p_j &\leq s - \epsilon_{\ell} + \sum_{q=\ell+1}^{k-1} \epsilon_q \lt s,
      \end{align}
      where the last inequality follows from the geometric decay of $\epsvector$.

    We have shown the forward direction of \mcatconditiontwo, and the backwards direction follows by identical reasoning.
    We now show $p$ satisfies \mcatconditionthree. It is not hard to show from \lemref{lem_next_of_dead_leaf} and induction on $s$ that $\rhodot(\nextlive(v^s_T(j))) = \rhodot(v_T(j))+s$, and that there is a path from $v_T(j)$ to $\nextlive(v^s_T(j))$. It follows that, if $v_T(i) = \nextlive(v^s_T(j))$ then
    \begin{align} \label{eq_pi_pj_mcatthree}
      p_i - p_j = s + \sum_{e \in \rho(i) \smallsetminus \rho(j)} \edgeweight(e).
    \end{align}
    Furthermore, if the path from $v_T(j)$ to $v_T(i)$ is a dashed path, then the $\sum$ in \eref{eq_pi_pj_mcatthree} is zero. This shows $p_i=p_j+s$ as desired. 
    For the other direction of \mcatconditionthree, assume that $p_i-p_j=s$. It follows from \eref{eq_mcat_existence_point} that 
    \begin{align}
      s = \rhodot(i)-\rhodot(j) + \sum_{e \in \rho(i)} \edgeweight(e) - \sum_{e \in \rho(j)} \edgeweight(e).
    \end{align}
    Therefore the right-hand-side must be an integer, and since $0 < \sum \epsvector < 1$, we have
    \begin{align} \label{eq_edgeweight_eq_edgeweight}
      \sum_{e \in \rho(i)} \edgeweight(e) = \sum_{e \in \rho(j)} \edgeweight(e),
    \end{align}
    and
    \begin{align} \label{eq_rhodot_eq_rhodot}
      \rhodot(i) = \rhodot(j)+s.
    \end{align}
    It follows from the geometric decay of $\epsvector$ that no distinct subsets of $\epsvector$ can have equal sum. 
    Thus, the positive terms on each side of \eref{eq_edgeweight_eq_edgeweight} are identical. 
    It follows that the set of solid edges in $\rho(i)$ and the set of solid edges in $\rho(j)$ are equal.
    Since each node can have at most one dashed edge, either $\rho(j)$ is a prefix of $\rho(i)$ or vice-versa. 
    Since $\rhodot(i) = \rhodot(j)+s$, with $s>0$, it must be that $\rho(j)$ is a prefix of $\rho(i)$. 
    Thus, we have a dashed path from $v_T(j)$ to $v_T(i)$. 

    It remains to show $v_T(i) = \nextlive(v^{s}_T(j))$. 
    If $v_T(i) \prec_T \nextlive(v^{s}_T(j))$, then we could apply \mcatconditiontwo to show $p_i < p_j+s$, which is a contradiction. If $\nextlive(v^{s}_T(j)) \prec_T v_T(i)$, then by the definition of $\prec_T$ and \eref{eq_rhodot_eq_rhodot}, the first difference in the paths must be $E_a$ in the path to $v_T(i)$ and $E_b$ in the path to $\nextlive(v^{s}_T(j))$ with $a<b$. But since there is a path from $v_T(j)$ to $\nextlive(v^{s}_T(j))$, and there is a dashed path from $v_T(j)$ to $v_T(i)$, the edge $E_a$ in question must be dashed. Therefore, since $a < b$, the edge $E_b$ leads to a dead leaf. This contradicts the fact that $\nextlive(v^{s}_T(j))$ is live. Thus, we have $v_T(i) = \nextlive(v^{s}_T(j))$ as desired.

    We have shown that $p$ is valid for any $\epsvector$, and clearly by varying $\epsvector$ one obtains a $k$-dimensional neighborhood of distinct valid points $p \in \mathbb{R}^n$.
    It follows that the $\canonicalmcatface$ in \thmref{thm_mcat_bijection} exists for every $T$, and that the dimension of $\mcatmap(T)$ is at least $k$.

    It remains to show that the dimension of the face $\mcatmap(T)$ is no more than $k$. 
    For a node $v$ in $T$, let $\emphnewmath{\text{label}(v)}$ denote the set of numbers in the label of $v$. 
    Let $r_{1} \prec_T \dots \prec_T r_{k}$ be the free nodes of $T$, and for $i \in [k]$ let $J_{i} := \text{label}(r_i)$, and let $j_i := \min J_i$. 
    Assume that two valid points $p,p'$ match for all coordinates $j_i$, that is, $p_{j_i} = p'_{j_i}$ for all $i \in [k]$. 
    Under this assumption, we claim that the validity of $p$ and $p'$ implies $p=p'$. 
    First, it is easy to show that for each $J_i$, $p_j = p'_j$ for \textit{all} $j \in J_i$. This follows from condition \mcatconditionone in \thmref{thm_mcat_bijection}. The remaining coordinates have indices appearing in the labels of captive nodes, and those are easily seen to be determined by condition \mcatconditionthree in \thmref{thm_mcat_bijection} (every such node has an ancestor which is free). Hence $p = p'$. Since we only assumed $p$ and $p'$ matched on $k$ coordinates, it follows that the affine dimension of $\mcatmap(T)$ is at most $k$, as desired. 

  \subsection{Proof that \texorpdfstring{$\mcatmap$}{map for m-Catalan faces} is injective} \label{subsec_mcat_proof_injective}
    Suppose $T$ and $T'$ are \mtrees and that the following three conditions hold:
    \begin{enumerate}
      \item[\mcattreeconditionone] For all $i,j \in [n]$, $v_T(i) \preceq_T v_T(j)$ if and only if $v_{T'}(i) \preceq_{T'} v_{T'}(j)$.
      \item[\mcattreeconditiontwo] For all $i,j \in [n]$ and $s \in [1,m]$, $v_T(i) \prec_T v^s_T(j)$ if and only if $v_{T'}(i) \prec_{T'} v^s_T(j)$.
      \item[\mcattreeconditionthree] For all $i,j \in [n]$ and $s \in [1,m]$, $v_T(i) = \nextlive(v^s_T(j))$ and there is a dashed path from $v_T(j)$ to $v_T(i)$ if and only if $v_{T'}(i) = \nextlive(v^s_{T'}(j))$ and there is a dashed path from $v_{T'}(j)$ to $v_{T'}(i)$.
    \end{enumerate}
    In this subsection, we show that these conditions imply $T = T'$, which proves that $\mcatmap$ is injective.
    \begin{definition}
      Let $T,T'$ be \mtrees, and suppose $v$ is a vertex in $T$ and $w$ is a vertex in $T'$. We write $\emphnewmath{v \simeq w}$ if both vertices are leaves, or if they are both nodes. We write $\emphnewmath{v \cong w}$ if $v,w$ are both leaves, or if $v,w$ are both nodes and $\text{label}(v) = \text{label}(w)$.
    \end{definition}
    \begin{lemma} \label{lem_mcat_prect_inductive}
      Let $T,T'$ be \mtrees. Let $v_1 \prec_T \dots \prec_T v_{p}$ be the vertices of $T$ and $w_1 \prec_{T'} \dots \prec_{T'} w_{p'}$ be the vertices of $T'$. For any $q\geq1$, if $v_i \simeq w_i$ for all $i \in [q-1]$ then $\rho(v_i) = \rho(w_i)$ for all $i \in [q]$ (where $[0] = \varnothing$). 
    \end{lemma}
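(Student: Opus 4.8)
The plan is to isolate a single ``local'' computation and then iterate it. First I would establish the following sub-claim: for $q\geq 2$ (with $v_q$ existing), the word $\rho(v_q)$ is determined by the tuple $(\rho(v_1),\dots,\rho(v_{q-1}))$ together with the single bit ``is $v_{q-1}$ a leaf or a node?''. (Note that the rank of each non-root $v_i$ is visible as the subscript of the last letter of $\rho(v_i)$, so the ranks of $v_1,\dots,v_{q-1}$ are part of this data.) Granting the sub-claim, the lemma follows by a short induction on $q$: for $q=1$, \precconditionone says the root is the $\prec_T$-minimum vertex, so $v_1,w_1$ are the roots of $T,T'$ and $\rho(v_1)=\rho(w_1)$ is the empty word; and if $\rho(v_i)=\rho(w_i)$ for all $i<q$, then since $v_{q-1}\simeq w_{q-1}$ makes $v_{q-1}$ a node exactly when $w_{q-1}$ is, the sub-claim is handed identical data in the two trees and returns $\rho(v_q)=\rho(w_q)$. (Throughout one tacitly assumes $q\leq\min(p,p')$, so that $v_q,w_q$ exist; that the hypothesis in fact forces $v_q$ to exist iff $w_q$ does can be folded into the same induction if one wishes to be thorough.)

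To prove the sub-claim I would split on $v_{q-1}$. If $v_{q-1}$ is a node, then by \precconditiontwo the vertex $v_q$ is its rank-$0$ child, so $\rho(v_q)=\rho(v_{q-1})E_0$. If $v_{q-1}$ is a leaf, I first observe that $v_q$ has rank $\geq 1$: it is not the root (that is $v_1$, and $q\geq2$), and if it had rank $0$ then \precconditiontwo would force its parent to equal $v_{q-1}$, contradicting that $v_{q-1}$ is a leaf. The key tool is then \precconditionthree, which I read as the statement that $w\mapsto\text{left}(w)$ is an order-isomorphism with respect to $\prec_T$ from the non-root vertices of rank $\geq 1$ onto the non-root vertices of rank $\leq m-1$ (surjectivity being the elementary observation that every non-root vertex of rank $\leq m-1$ has an immediate right-sibling), together with the trivial fact that $\text{left}(w)\prec_T w$ because $\rhodot$ drops by $1$. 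Counting the rank-$\geq 1$ vertices that are $\preceq_T v_q$, one sees $v_q$ is the $d$-th smallest such vertex, where $d = 1+\#\{\,i<q : \text{rank}(v_i)\geq 1\,\}$; hence $\text{left}(v_q)$ is the $d$-th smallest non-root vertex of rank $\leq m-1$. Since $\text{left}(v_q)\prec_T v_q$, this vertex lies among $v_1,\dots,v_{q-1}$, and the set $\{v_1,\dots,v_{q-1}\}$ is a $\prec_T$-down-set, so $\text{left}(v_q)$ is exactly the $d$-th entry, in index order, of the sublist of those $v_i$ with $i<q$ that are non-root of rank $\leq m-1$. Writing $v_a$ for it, $v_q$ is the immediate right-sibling of $v_a=\text{left}(v_q)$, so $\rho(v_q)$ is obtained from $\rho(v_a)$ by increasing by $1$ the subscript of its final letter. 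Since $d$, then $a$, then $\rho(v_a)$ are all visibly functions of $(\rho(v_1),\dots,\rho(v_{q-1}))$, the sub-claim is proved.

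I expect the leaf case to be the only real obstacle, and within it two things need care: squeezing out of \precconditionthree the precise statement that $\text{left}$ is an order-isomorphism between the two rank classes (so that ``the $d$-th vertex'' means the same thing on both sides), and checking that the index $a$ really is a function of the already-reconciled prefix $\rho(v_1),\dots,\rho(v_{q-1})$ and of nothing else in $T$. The node case and the base case are routine. I would also verify the degenerate sub-case in which no $v_i$ with $i<q$ has rank $\geq 1$ --- then $v_{q-1}$ is the leftmost leaf of $T$, $d=1$, the vertex $v_a=v_2$ is the rank-$0$ child of the root, and the general rule correctly outputs $\rho(v_q)=E_1$ --- confirming that this case needs no separate handling.
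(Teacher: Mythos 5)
Your proof is correct and follows the same inductive strategy as the paper: the paper's justification (``the path $\rho(v_q)$ must also be a path to some vertex in $T'$\dots the terminus of this path is none other than $w_q$'') is precisely your sub-claim that $\rho(v_q)$ is a tree-independent function of $\rho(v_1),\dots,\rho(v_{q-1})$ together with the leaf/node status of $v_{q-1}$. You supply, via \precconditiontwo for the node case and the order-isomorphism reading of \precconditionthree for the leaf case, exactly the details the paper declares ``easy to show.''
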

    \begin{proof}
      We proceed by induction on $q$. Since $\rho(v_1)$ and $\rho(w_1)$ are both the empty word, the claim is true for $q=1$. For $q>1$, we assume that $v_i \simeq w_i$ for all $i \in [q-1]$, and, by induction, it is enough to show only that $\rho(v_q) = \rho(w_q)$. We may deduce from $v_i \simeq w_i$ for $i \in [q-1]$ that the path $\rho(v_q)$ must also be a path to some vertex in $T'$. It is easy to show that the terminus of this path is none other than $w_q$, and thus $\rho(v_q) = \rho(w_q)$. 
    \end{proof}
    Let $v_1 \prec_T \dots \prec_T v_{p}$ be the vertices of $T$, and let $w_1 \prec_{T'} \dots \prec_{T'} w_{p}$ be the vertices of $T'$. To show $T = T'$, it is enough to show that for all $i \in [p]$,
    \begin{enumerate}[label=(\roman*)]
      \item $v_i \cong w_i$,
      \item $\rho(v_i) = \rho(w_i)$, and
      \item $v_i$ is a captive node if and only if $w_i$ is a captive node. 
    \end{enumerate}
    We prove (i) by induction on $i$. The base case $i=1$ clearly falls under Case A below.

    \caseif{A}{$v_i$ is a node and $w_i$ is a node.}
      Let $a$ be any element of $\text{label}(v_i)$. Let $b$ be any element of $\text{label}(w_i)$. Since $b \in [n]$, $b$ appears in $\text{label}(v_{j})$ for some $j \in [p]$, and by the inductive hypothesis $j \geq i$. This proves 
      \begin{align} \label{eq_mcatalan_vta_prec_vtb}
        v_{T}(a) \preceq_T v_{T}(b).
      \end{align}
      Similarly, $a$ appears in $T'$ in $\text{label}(w_{j'})$ for some $j' \geq i$, so we also have 
      \begin{align} \label{eq_mcatalan_vtprimeb_prec_vtprimea}
        v_{T'}(b) \preceq_{T'} v_{T'}(a).
      \end{align}
      On the other hand, by \mcattreeconditionone, \eref{eq_mcatalan_vta_prec_vtb} implies
      \begin{align} \label{eq_mcatalan_vtprimea_prec_vtprimeb}
        v_{T'}(a) \preceq_{T'} v_{T'}(b).
      \end{align}
      Combining \eref{eq_mcatalan_vtprimea_prec_vtprimeb} and \eref{eq_mcatalan_vtprimeb_prec_vtprimea}, we obtain $v_{T'}(a) = v_{T'}(b)$, which by \mcattreeconditionone also proves $v_{T}(a) = v_{T}(b)$. This shows that $\text{label}(v_i) = \text{label}(w_i)$, and so $v_i \cong w_i$. 

    \caseif{B}{$v_i$ is a leaf and $w_i$ is a leaf.}
      In this case, $v_i \cong w_i$ trivially.

    \caseif{C}{$v_i$ is a leaf and $w_i$ is a node.} 
      We will show that this case is not possible. 
      Let $b$ be any element of $\text{label}(w_i)$. 
      The number $b$ appears in $\text{label}(v_{j})$ for some $j$, and by the inductive hypothesis $j \gt i$. 
      Therefore, there is at least one node following $v_i$ in the $\prec_T$ order. 
      Let $\emphnewmath{\nextnode}(v)$ denote the minimum node of $T$ in the $\prec_T$ order that is greater than or equal to $v$.
      Let $u := \nextnode(v)$.
      It follows that $u \preceq_{T} v_j = v_T(b)$. 
      Further, it follows from \precconditiontwo and the fact that $v_i$ is a leaf that $u$ is the child of rank $s>0$ of some vertex, $v_{h}$.
      Let $a$ be any element of $\text{label}(v_h)$.
      Since $v_h$ is the parent of $u$, $v_{h} \prec_T u$ and since $u = \nextnode(v_i)$ we have $v_{h} \prec_T v_{i}$. 
      In summary we have
      \begin{align} \label{eq_mcat_v_i_sandwich} 
        v_T(a) = v_{h} \prec_{T} v_{i} \prec_{T} u = v^s_T(a) \preceq_{T} v_T(b).
      \end{align}
      The final relation $v^s_T(a) \preceq v_T(b)$ is the one that will create a contradiction.  
      We claim that the opposite is true in $T'$, namely
      \begin{align} \label{eq_vtpb_prec_kthelement}
        v_{T'}(b) \prec_{T'} v^s_{T'}(a).
      \end{align}
      Our proof of \eref{eq_vtpb_prec_kthelement} goes as follows. First, since $v_{h} \prec_T v_i$, we have $h<i$. From \lemref{lem_mcat_prect_inductive} and the inductive hypothesis, we obtain
      \begin{align} \label{eq_mcat_rhovh_eq_rhowh}
        \rho(v_{h}) = \rho(w_{h}),
      \end{align}
      and
      \begin{align} \label{eq_mcat_rhovi_eq_rhowi}
        \rho(v_i) = \rho(w_i).
      \end{align}
      Furthermore, since $h<i$ we have $w_{h} \cong v_{h}$. Since $v_{h} = v_T(a)$ this implies $w_{h} = v_{T'}(a)$. So we can rewrite \eref{eq_mcat_rhovh_eq_rhowh} as 
      \begin{align} \label{eq_rhota_eq_rhotpa}
        \rho(v_T(a)) = \rho(v_{T'}(a)),
      \end{align}
      which implies 
      \begin{align} \label{eq_rho_sthchild_eq_rho_sthchild}
        \rho(v^s_T(a)) = \rho(v^s_{T'}(a)).
      \end{align}
      Finally, combining the equations \eref{eq_mcat_rhovi_eq_rhowi}, \eref{eq_rho_sthchild_eq_rho_sthchild} and the fact that $v_i \prec_T v^s_T(a)$, we can deduce
      \begin{align} \label{eq_wi_prectp_rho_sthchild}
        w_i \prec_{T'} v^s_{T'}(a),
      \end{align}
      since the $\prec_{T}$ and $\prec_{T'}$ orders are defined completely in terms of the paths given by $\rho$. Finally, since $w_i = v_{T'}(b)$, we obtain \eref{eq_vtpb_prec_kthelement}. 
      It is easy to see that \eref{eq_vtpb_prec_kthelement} and the last relation of \eref{eq_mcat_v_i_sandwich} contradict \mcattreeconditiontwo. So indeed this case is impossible.

    \caseif{D}{$v_i$ is a node and $w_i$ is a leaf.} 
      This is the same as Case C by symmetry, so it is also impossible.

    This completes our proof of (i), that $v_i \cong w_i$ for all $i \in [p]$. \lemref{lem_mcat_prect_inductive} shows that (i) implies (ii). Moreover, (iii) clearly follows from \mcattreeconditionthree together with (i) and (ii). Thus we obtain $T = T'$ as desired. 

  \subsection{Proof that \texorpdfstring{$\mcatmap$}{map for m-Catalan faces} is surjective} \label{subsec_mcat_proof_surjective}
    In this subsection, we will show that for every $m$-Catalan face $\canonicalmcatface$ in $\mathbb{R}^n$, there exists an \mtree $T$ such that the three conditions in \thmref{thm_mcat_bijection} are satisfied. To do so, we will need to construct the tree $T$. Before coming to the construction, we introduce some additional structure and vocabulary. 

    We associate to any point $p \in \mathbb{R}^n$ a tuple consisting of $m+1$ functions and $m$ subsets of $[n]$. Let 
    \begin{align}
      W_p := \bigcup_{i=0}^{n} \bigcup_{s=0}^{m} \, \{\, (p_i+s,-s)\, \}.
    \end{align}
    For $x \in \mathbb{R}$ and $z \in \mathbb{Z}$ we write $(x,z) \lesslex (x',z')$ if $x<x'$ or $x=x'$ and $z<z'$. Given $k \in [n]$ and $s \in [0,m]$, we define 
    \begin{align} \label{eq_etap_definition} 
      \emphnewmath{\mnnsub{s}(k)} &:=  \# \{w \in W_p \given w \lesseqlex (p_k+s,-s)\}.
    \end{align}
    For $s \in [1,m]$ we define 
    \begin{align}
      \emphnewmath{\mnndashedsub{s}} := \{k \given \padtext{there exists $j\in[n]$ such that} p_j+s=p_k\}.
    \end{align}
    We let $\emphnewmath{\mnntriplep}$ denote $(\mnntriplefull, \mnndashedfull)$, and call $\mnntriplep$ the \emphnew{$m$-Catalan code} of $p$.

    Let $\emphnewmath{\image(\mnnsub{s})}$ denote the image of $\mnnsub{s}$. Note that $\cup_{s=0}^{n} \image(\mnnsub{s}) = [h]$ for some $h \leq (m+1)n$, and that for any $s,s' \in [1,m]$ with $s \neq s'$ we have $\image(\mnnsub{s}) \cap \image(\mnnsub{s'}) = \varnothing$. We call the $t \in \cup_{s=0}^{n} \image(\mnnsub{s})$ the \emphnew{sites} of $\mnntriplep$. For $s \geq 0$, we say $t$ is a \emphnew{type $s$ site} if $t \in \image(\mnnsub{s})$.

    A \emphnew{dash site} is a site $t$ of type $s>0$ such that $t+1$ is a type $s'$ site with $s'\lt s$ and $\mnnsubinv{s'}(t+1) \subset \mnndashedsub{s-s'}$, where 
    \begin{align}
      \emphnewmath{\mnnsubinv{s}(i)} &:= \{ k \in [n] \given \mnnsub{s}(k)=i\}.
    \end{align}
    \begin{example} \label{example_cat_mnn}
      Suppose $m=1$. Let $p = (1.0, \; 1.0, \; 1.1, \; 2.0, \; 2.2, \; 4.0) \in \mathbb{R}^6$. Then
      \begin{align}
        W_p = \{&(1.0, 0), (1.1, 0), (2.0, -1), (2.0,0), (2.1,-1), \\
        &(2.2,0), (3.0,-1), (3.2,-1), (4.0,0), (5.0,-1)\}.
      \end{align}
      The resulting values of $\mnnzero$ and $\mnnone$ are given by the following table.
        \begin{center}
          \begin{tabular}{r|cccccc}
            $k$ & 1 & 2 & 3 & 4 & 5 & 6 \\ \hline
            $\mnnzero(k)$ &        1 & 1 & 2 & 4 & 6 & 9 \\
            $\mnnone(k)$  &        3 & 3 & 5 & 7 & 8 & 10 
          \end{tabular}
        \end{center}
        We have $\mnndashedsub{1} = \{4\}$. 
        The type 0 sites are 1, 2, 4, 6, 9.
        The type 1 sites are 3, 5, 7, 8, 10.
        The only dash site is $3$.
    \end{example}
    \begin{example} \label{example_mcat_mnn}
      Suppose $m=2$. 
      Let $p = (2.3, \; 1.5, \; 3.3, \; 1.4, \; 1.4, \; 1.0, \; 3.1, \; 4.3, \; 4.2) \in \mathbb{R}^7$.
      The set $W_p$ has cardinality $24$.
      The resulting values of $\mnnzero, \mnnone,$ and $\mnnsub{2}$ are given by the following table.
        \begin{center}
          \begin{tabular}{r|ccccccccc}
                        $k$ & 1  & 2  & 3  & 4  & 5  & 6 & 7  & 8  & 9 \\ \hline
            $\mnnzero(k)$   & 5  & 3  & 11 & 2  & 2  & 1 & 9  & 18 & 15 \\
            $\mnnone(k)$    & 10 & 7  & 17 & 6  & 6  & 4 & 14 & 22 & 20 \\
            $\mnnsub{2}(k)$ & 16 & 13 & 21 & 12 & 12 & 8 & 19 & 24 & 23
          \end{tabular}
        \end{center}
        We have $\mnndashedsub{1} = \{3,8\}$, and $\mnndashedsub{2} = \{8\}$. 
        The type $0$ sites are $1, 2, 3, 5, 9, 11, 15,$ and $18$.
        The type $1$ sites are $4, 6, 7, 10, 14, 17, 20,$ and $22$.
        The type $2$ sites are $8, 12, 13, 16, 19, 21, 23,$ and $24$.
        Sites $10, 16, 17$ are dash sites.
    \end{example}
    We record a simple fact that will be important later. 
    \begin{lemma} \label{lem_mnndashed_after_one}
      Let $\mnntriple = (\mnntriplefull, \mnndashedfull)$ be the $m$-Catalan code of a point $p \in \mathbb{R}^n$, and suppose $\mnnsub{s}(i)$ is a dash site. Let $s',j$ be such that $\mnnsub{s}(i)+1=\mnnsub{s'}(j)$. Then for any $u \in [-s',m-s]$, we have $\mnnsub{s+u}(i)+1=\mnnsub{s'+u}(j)$ and $\mnnsub{s+u}(i)$ is a dash site.
    \end{lemma}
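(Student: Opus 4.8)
The plan is to translate the combinatorial hypotheses on the $m$-Catalan code back into statements about the set $W_p$ and the real numbers $p_1,\dots,p_n$, and then transport everything along the map $\sigma_u\colon(x,z)\mapsto(x+u,z-u)$, which is a strictly $\lesslex$-increasing bijection of $\mathbb{R}\times\mathbb{Z}$. First I would record a preliminary observation: for $r\in[0,m]$ and $k,k'\in[n]$ the pairs $(p_k+r,-r)$ and $(p_{k'}+r,-r)$ both lie in $W_p$, and since distinct elements of $W_p$ receive distinct ranks under $\lesslex$ we get $\mnnsub{r}(k)=\mnnsub{r}(k')$ if and only if $p_k=p_{k'}$; hence $\mnnsubinv{r}(\mnnsub{r}(k))=\{k''\in[n]\given p_{k''}=p_k\}$. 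Writing $t:=\mnnsub{s}(i)$, the hypothesis $t+1=\mnnsub{s'}(j)$ then says precisely that $(p_i+s,-s)$ and $(p_j+s',-s')$ are \emph{consecutive} elements of $W_p$ in the $\lesslex$ order, and the dash-site condition $\mnnsubinv{s'}(t+1)\subset\mnndashedsub{s-s'}$ says that every $k$ with $p_k=p_j$ lies in $\mnndashedsub{s-s'}$, i.e.\ there is a $j'\in[n]$ with $p_{j'}=p_j-(s-s')$. Also $s'<s$, by the definition of a dash site.

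The crux is to deduce that in fact $p_i+s=p_j+s'$. Consecutiveness rules out $p_i+s>p_j+s'$, so suppose $p_i+s<p_j+s'$ and pick $j'$ with $p_{j'}=p_j-(s-s')$. Then $(p_{j'}+s,-s)=(p_j+s',-s)$ is an element of $W_p$ (legitimate since $s\in[0,m]$) whose first coordinate exceeds $p_i+s$ and whose second coordinate $-s$ is below $-s'$, so it lies strictly $\lesslex$-between $(p_i+s,-s)$ and $(p_j+s',-s')$ and equals neither endpoint, contradicting consecutiveness. Hence $p_i+s=p_j+s'$.

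The rest is bookkeeping. For $u\in[-s',m-s]$ one checks that $s+u$ and $s'+u$ both lie in $[0,m]$ (indeed $1\leq s-s'\leq s+u\leq m$ and $0\leq s'+u\leq m-1$), so $\mnnsub{s+u}$ and $\mnnsub{s'+u}$ are defined, and $p_i+(s+u)=p_j+(s'+u)=:X_u$ with $s+u>s'+u$, so $(p_i+(s+u),-(s+u))\lesslex(p_j+(s'+u),-(s'+u))$. To see these are consecutive in $W_p$, suppose $(p_a+r,-r)\in W_p$ lies strictly between them; then $p_a+r=X_u$ and $s'+u<r<s+u$, so $r':=r-u\in\{s'+1,\dots,s-1\}\subseteq[0,m]$ and $\sigma_{-u}(p_a+r,-r)=(p_a+r',-r')\in W_p$, whose first coordinate equals $X_u-u=p_i+s=p_j+s'$, lies strictly $\lesslex$-between $(p_i+s,-s)$ and $(p_j+s',-s')$, contradicting consecutiveness at $u=0$. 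So $\mnnsub{s+u}(i)+1=\mnnsub{s'+u}(j)$. Finally, $\mnnsub{s+u}(i)$ is a dash site: its type $s+u$ is positive; $\mnnsub{s+u}(i)+1$ equals $\mnnsub{s'+u}(j)$, hence is a site of type $s'+u<s+u$; and by the preliminary observation $\mnnsubinv{s'+u}(\mnnsub{s+u}(i)+1)=\{k\in[n]\given p_k=p_j\}=\mnnsubinv{s'}(t+1)\subset\mnndashedsub{s-s'}=\mnndashedsub{(s+u)-(s'+u)}$, the inclusion being the original hypothesis.

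The main obstacle is the middle step: seeing that a dash site forces the two relevant elements of $W_p$ to share a first coordinate, not merely to straddle the same rank gap. Once that is in hand, applying the shift $\sigma_u$ transports both the consecutiveness statement and the inclusion $\mnnsubinv{s'}(t+1)\subset\mnndashedsub{s-s'}$ to the shifted indices immediately, and the interval $[-s',m-s]$ is exactly the set of integer shifts that keep both $s+u$ and $s'+u$ inside $[0,m]$.
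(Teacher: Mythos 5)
Your proposal is correct and takes essentially the same route as the paper's (quite terse) proof: first establish $p_i+s=p_j+s'$ from the dash-site condition, then shift by $u$ and observe that both the consecutiveness in $W_p$ and the inclusion $\mnnsubinv{s'}(t+1)\subset \mnndashedsub{s-s'}$ transport to the shifted indices. The two steps the paper dismisses as ``easy to show'' and ``one easily sees'' are exactly the ones you prove carefully (the betweenness contradiction forcing equality of first coordinates, and the shift $\sigma_{-u}$ pulling an intervening element back to $u=0$), and those arguments check out.
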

    \begin{proof}
      It is easy to show from the definition of dash site that the hypothesis implies $p_i+s = p_{j}+s'$, which trivially implies $p_i+(s+u) = p_{j}+(s'+u)$ for any $u$. For $u \in [-s',m-s]$ we have $0 \leq s'+u,s+u \leq m$, and so in $W_p$ we have
      \begin{align}
        (p_i+s+u,-(s+u)) \lesslex (p_j+s'+u,-(s'+u)),
      \end{align}
      and one easily sees that there cannot be an element between them, because $\mnnsub{s}(i)+1=\mnnsub{s'}(j)$. The conclusion follows.
    \end{proof}
    \begin{proposition} \label{prop_mnn_bijection}
      Let $\mnntriple = (\mnntriplefull, \mnndashedfull)$ be the $m$-Catalan code of a point $p \in \mathbb{R}^n$. 
      \begin{enumerate}
        \item[(1)] For all $i,j \in [n]$, $p_i \leq p_j$ if and only if $\mnnzero(i) \leq \mnnzero(j)$.
      \end{enumerate}
      And for any $s \in [1,m]$ we have:
      \begin{enumerate}
        \item[(2)] For all $i,j \in [n]$, $p_i < p_j+s$ if and only if $\mnnzero(i) < \mnnsub{s}(j)$.
        \item[(3)] For all $i,j \in [n]$, $p_i = p_j+s$ if and only if every $t \in [\mnnsub{s}(j),\mnnzero(i)-1]$ is a dash site.
      \end{enumerate}
    \end{proposition}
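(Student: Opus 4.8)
\emph{Plan.} The whole proposition rests on one reinterpretation: $\mnnsub{s}(k)$ is exactly the rank of the pair $(p_k+s,-s)$ when $W_p$ is listed in increasing $\lesslex$-order. I would first record this, together with its immediate consequence that, since the elements of $W_p$ are distinct, $\mnnsub{s}(k)\le\mnnsub{s'}(k')$ holds iff $(p_k+s,-s)\lesseqlex(p_{k'}+s',-s')$, and likewise with the strict inequalities matching. Parts (1) and (2) are then one line each: $(p_i,0)\lesseqlex(p_j,0)$ iff $p_i\le p_j$; and for $s\ge1$ the pairs $(p_i,0)$ and $(p_j+s,-s)$ are distinct and never tie in the first coordinate with $0<-s$, so $(p_i,0)\lesslex(p_j+s,-s)$ iff $p_i<p_j+s$.

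For (3) I would isolate the following lemma first: \emph{if $t$ is a dash site then the rank-$t$ and rank-$(t{+}1)$ pairs of $W_p$ have the same first coordinate.} Write the rank-$t$ pair as $(a,-s_1)$ with $s_1>0$ and the rank-$(t{+}1)$ pair as $(a',-s_2)$ with $s_2<s_1$ (using that $t$ and $t+1$ are sites of types $s_1$ and $s_2$). The dash condition gives a nonempty $\mnnsubinv{s_2}(t+1)\subseteq\mnndashedsub{s_1-s_2}$; picking $k$ in it yields $p_k+s_2=a'$ and some $j$ with $p_j+(s_1-s_2)=p_k$, hence $p_j+s_1=a'$, so $(a',-s_1)\in W_p$. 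Since $(a',-s_1)\lesslex(a',-s_2)$ its rank is at most $t$, while $a'\ge a$ by the sorting; if $a'>a$ then $(a,-s_1)\lesslex(a',-s_1)$ would push its rank above $t$, a contradiction, so $a'=a$.

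Given the lemma, both directions of (3) are short. If $p_i=p_j+s$, then the rank-$\mnnsub{s}(j)$ pair is $(p_i,-s)$ and the rank-$\mnnzero(i)$ pair is $(p_i,0)$; as $s\ge1$ these are distinct with $\mnnsub{s}(j)<\mnnzero(i)$, and since the list is sorted by first coordinate every pair of rank in $[\mnnsub{s}(j),\mnnzero(i)]$ has first coordinate $p_i$; for each $t$ in $[\mnnsub{s}(j),\mnnzero(i)-1]$ the rank-$t$ pair is $(p_i,-s_1)$ with $s_1\ge1$, the rank-$(t{+}1)$ pair is $(p_i,-s_2)$ with $s_2<s_1$, and the dash condition $\mnnsubinv{s_2}(t+1)\subseteq\mnndashedsub{s_1-s_2}$ holds because any $k$ with $p_k+s_2=p_i$ and any $k'$ with $p_{k'}+s_1=p_i$ satisfy $p_k=p_{k'}+(s_1-s_2)$ with $1\le s_1-s_2\le m$. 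Conversely, if every $t$ in $[\mnnsub{s}(j),\mnnzero(i)-1]$ is a dash site then, chaining the lemma from rank $\mnnsub{s}(j)$ up to rank $\mnnzero(i)$, all these pairs share a first coordinate; comparing the rank-$\mnnsub{s}(j)$ pair $(p_j+s,-s)$ with the rank-$\mnnzero(i)$ pair $(p_i,0)$ gives $p_j+s=p_i$.

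The main obstacle is spotting and proving the dash-site lemma; after that, (3) is bookkeeping about first coordinates in the sorted list $W_p$. One point to flag is the degenerate interval: by (2), $[\mnnsub{s}(j),\mnnzero(i)-1]$ is nonempty iff $p_i\ge p_j+s$, so (3) should be read with ``the interval is nonempty'' built into the right-hand condition (equivalently, combine it with (2)); with that reading the converse direction above is complete.
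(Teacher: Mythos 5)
Your proposal is correct, and it proves part (3) by a genuinely different route than the paper. The paper handles the forward direction of (3) by induction on the shift $s$ (stepping from $\mnnsub{s}(j)$ to the next site $\mnnsub{s'}(j')$ with $s'<s$), and the backward direction by a separate strong induction on $s$ that leans on \lemref{lem_mnndashed_after_one} to transport the dash-site structure down to level $s-s'$ and apply the hypothesis twice. You instead isolate a single local lemma --- a dash site at rank $t$ forces the rank-$t$ and rank-$(t{+}1)$ elements of $W_p$ to share their first coordinate --- and chain it along the interval $[\mnnsub{s}(j),\mnnzero(i)]$ in both directions; the induction is on position in the sorted list rather than on $s$, and \lemref{lem_mnndashed_after_one} is never needed. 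Your lemma's proof (producing $(a',-s_1)\in W_p$ from the dash condition and squeezing its rank between $t$ and $t$) is sound, and your forward direction correctly verifies the full dash-site condition, including $1\le s_1-s_2\le m$, for every $t$ in the interval. The trade-off: your argument is more self-contained and makes transparent why dash sites encode the equalities $p_i=p_j+s$, while the paper's detour through \lemref{lem_mnndashed_after_one} pays off later, since that lemma is reused in the well-definedness of $\mcattreeconstruct$ and in \propref{prop_mcat_surjective}. Your remark about the degenerate case is also apt: when $\mnnzero(i)\le\mnnsub{s}(j)$ the interval is empty and the right-hand condition of (3) is vacuously true while the left-hand side is false (by (2) and the fact that a site cannot be of type $0$ and type $s>0$ simultaneously), so nonemptiness of the interval must indeed be read into the statement --- an imprecision present in the paper's formulation as well, which your reading resolves correctly.
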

    \begin{proof}
      Items (1) and (2) are clear from the definitions. 

      For the forward direction of (3), we use induction on $s$. For the case $s=1$, it is easy to show that $p_i=p_j+1$ implies $\mnnsub{1}(j)+1=\mnnzero(i)$, and that $\mnnsub{1}(j)$ is a dash site, as desired. For $s>1$, let $s',j'$ be such that $\mnnsub{s}(j)+1=\mnnsub{s'}(j')$. 
      In $W_p$, we must have 
      \begin{align} \label{eq_pj_lesslex}
        (p_j+s,-s) \lesslex (p_{j'}+s',-s') \lesslex (p_i,0),
      \end{align}
      which implies $p_j+s=p_{j'}+s'$ and $s \gt s'$. Therefore $\mnnsub{s}(j)$ is a dash site. We also see from \eref{eq_pj_lesslex} that $p_{j'}+s'=p_i$ so, by the inductive hypothesis, every $t \in [\mnnsub{s'}(j),\mnnzero(i)-1]$ is a dash site. This proves the claim.

      For the backward direction of (3), we use (strong) induction on $s$. For the case $s=1$, it is clear that $\mnnsub{1}(j)+1=\mnnzero(i)$. Since $\mnnsub{1}(j)$ is a dash site, we know there exists $j'\in[n]$ with $p_{j'}+1 = p_i$. It is easy to see that this implies $\mnnsub{1}(j')+1=\mnnzero(i)$, whence $\mnnsub{1}(j')=\mnnsub{1}(j)$. It is obvious that $\mnnsub{1}(j')=\mnnsub{1}(j)$ implies $p_j=p_{j'}$, and the claim follows. 

      For $s>1$, let $j',s'$ be such that $\mnnsub{s}(j)+1=\mnnsub{s'}(j')$. First, since $\mnnsub{s}(j)$ is a dash site, we have $s' \lt s$, so by the inductive hypothesis, we have $p_{j'}+s'=p_i$. On the other hand, by \lemref{lem_mnndashed_after_one}, we must have $\mnnsub{s-s'}(j)+1=\mnnsub{0}(j')$ and $\mnnsub{s-s'}(j)$ is a dash site. Applying the inductive hypothesis again gives $p_{j}+s-s' = p_{j'}$. Adding $s'$ to both sides proves $p_{j}+s=p_i$ as desired.
    \end{proof}
    For $p \in \mathbb{R}^n$, let $\emphnewmath{\mcatface{p}}$ denote the unique $m$-Catalan face in $\mathbb{R}^n$ containing the point $p$.
    \begin{corollary} \label{cor_mcatface_eta}
      For two points $p,q \in \mathbb{R}^n$, 
      \begin{align} \label{} 
        \mnntriplesub{p} = \mnntriplesub{q} \iff \mcatface{p} = \mcatface{q}.
      \end{align}
    \end{corollary}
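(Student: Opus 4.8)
The plan is to route both implications through the \emph{choice function} of an $m$-Catalan face. As noted in \subsecref{subsec_mcat_proof_existence}, an $m$-Catalan face is completely determined by, and determines, which side of each hyperplane $x_i = x_j + s$ (for $i,j \in [n]$, $s \in [0,m]$) it lies on; equivalently, $\mcatface{p}$ corresponds to the assignment $(i,j,s) \mapsto \operatorname{sign}(p_i - p_j - s) \in \{-1,0,+1\}$, and $\mcatface{p} = \mcatface{q}$ holds if and only if $\operatorname{sign}(p_i - p_j - s) = \operatorname{sign}(q_i - q_j - s)$ for all such $i,j,s$. It therefore suffices to prove the two assertions: (a) the code $\mnntriplep$ is a function of this sign data, and (b) conversely the sign data is a function of $\mnntriplep$.

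For (a), which yields the implication $\mcatface{p} = \mcatface{q} \Rightarrow \mnntriplesub{p} = \mnntriplesub{q}$, I would unwind the definition of $\mnntriplep$ and observe that each ingredient depends only on the signs $\operatorname{sign}(p_i - p_j - s)$. Two pairs $(p_i + t, -t)$ and $(p_{i'} + t', -t')$ name the same element of $W_p$ precisely when $t = t'$ and $\operatorname{sign}(p_i - p_{i'}) = 0$; and $(p_i + t, -t) \lesseqlex (p_k + s, -s)$ holds precisely when either $\operatorname{sign}(p_i - p_k - (s-t)) < 0$, or $\operatorname{sign}(p_i - p_k - (s-t)) = 0$ together with $t \geq s$. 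Since $s - t \in [-m,m]$, the quantity $\operatorname{sign}(p_i - p_k - (s-t))$ is one of the recorded signs (after swapping the roles of $i$ and $k$ and negating when $s-t<0$). Hence each $\mnnsub{s}(k) = \#\{w \in W_p : w \lesseqlex (p_k+s,-s)\}$ is determined by the sign data, and likewise $k \in \mnndashedsub{s}$ iff $\operatorname{sign}(p_k - p_j - s) = 0$ for some $j$. So equal choice functions force $\mnntriplesub{p} = \mnntriplesub{q}$.

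For (b), i.e.\ the implication $\mnntriplesub{p} = \mnntriplesub{q} \Rightarrow \mcatface{p} = \mcatface{q}$, I would invoke \propref{prop_mnn_bijection}, first observing that the set of dash sites of $\mnntriplep$ is itself manifestly a function of $\mnntriplep$ (it is defined purely from the $\mnnsub{s}$ and the $\mnndashedsub{s}$). Assume $\mnntriplesub{p} = \mnntriplesub{q}$. Item (1) gives $p_i \leq p_j \iff \mnnzero(i) \leq \mnnzero(j) \iff q_i \leq q_j$, which fixes the sign of $p_i - p_j$ to agree with that of $q_i - q_j$ (the case $s=0$). For $s \in [1,m]$, item (2) gives $p_i < p_j + s \iff \mnnzero(i) < \mnnsub{s}(j) \iff q_i < q_j + s$, and item (3), using that $p$ and $q$ have the same dash sites, gives $p_i = p_j + s \iff q_i = q_j + s$. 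Since the trichotomy for $p_i - p_j - s$ is pinned down by its ``$<$'' and ``$=$'' cases, we conclude $\operatorname{sign}(p_i - p_j - s) = \operatorname{sign}(q_i - q_j - s)$ for all $i,j,s$, so $\mcatface{p} = \mcatface{q}$.

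The one place that demands care — and the step I expect to be the main (though still modest) obstacle — is assertion (a): one must keep in mind that $W_p$ is a \emph{set}, so coincidences $p_i + t = p_{i'} + t$ collapse to a single element, and one must handle the lexicographic tie-break on the second coordinate correctly in the boundary case $p_i + t = p_k + s$. Everything else is a direct appeal to \propref{prop_mnn_bijection} and to the elementary fact that a non-empty $m$-Catalan face carries exactly the same information as its choice function.
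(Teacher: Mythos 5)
Your proposal is correct and matches the paper's intent: the paper's entire proof is the one line ``Follows from \propref{prop_mnn_bijection},'' which is precisely your part (b) for one implication together with the (left implicit) observation that the code is a function of the face's sign data for the other. Your part (a), carefully unwinding $W_p$ and the lexicographic tie-break, is exactly the detail the paper omits, so there is nothing to correct.
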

    \begin{proof}
      Follows from \propref{prop_mnn_bijection}.
    \end{proof}
    We are now ready to carry out the construction of the tree $T$, given a $m$-Catalan face $\canonicalmcatface$. We will denote this construction $\canonicalmcatface \mapsto T$ by $\mcattreeconstruct$. 

    For the remainder of this section, we let $\mnntriple_{\canonicalmcatface} = (\mnntriplefull, \mnndashedfull)$ denote the $m$-Catalan code of any point in $\canonicalmcatface$. By \corref{cor_mcatface_eta}, the choice of point is immaterial. 

    A \emphnew{budding $(m+1)$-ary tree} is an $(m+1)$-ary tree whose nodes are labeled by disjoint subsets of $[n]$ (not necessarily a partition of $[n]$), with a choice of solid/dashed for each edge, and with a marked subset of leaves, referred to as \emphnew{buds}. Leaves that are not buds are called \emphnew{true leaves}. We construct $T$ through a sequence of budding $(m+1)$-ary trees as illustrated in \figref{fig_matchmap_example}. We establish some vocabulary regarding these trees and operations on them: 

    \noindent \textbf{Vocabulary:} Let $R$ be a budding $(m+1)$-ary tree.
    \begin{itemize}
      \item \emphspecial{first bud}: Assuming $R$ contains at least one bud, its \specialword{first bud} is its minimum bud according to the $\prec_R$ order.
    \end{itemize}
    \noindent \textbf{Operations:} Let $R$ be a budding $(m+1)$-ary tree.
    \begin{itemize}
       \item \emphspecial{close}: Find the \specialword{first bud} of $R$ and replace it with a true leaf.
       \item \emphspecial{$\text{open}_S$}: With $S \subset [n]$, find the \specialword{first bud} of $R$ and replace it with a node labeled with $S$ that has $m+1$ children, all buds.
       \item \emphspecial{$\text{dashopen}_S$}: Apply \specialword{$\text{open}_S$} and make the new node labeled $S$ a captive node, that is, make dashed the edge connecting the node labeled $S$ to its parent.
    \end{itemize}
    Examples may be seen in \figref{fig_matchmap_example}.

    \begin{figure}[t]
      \centering
      \includegraphicswide{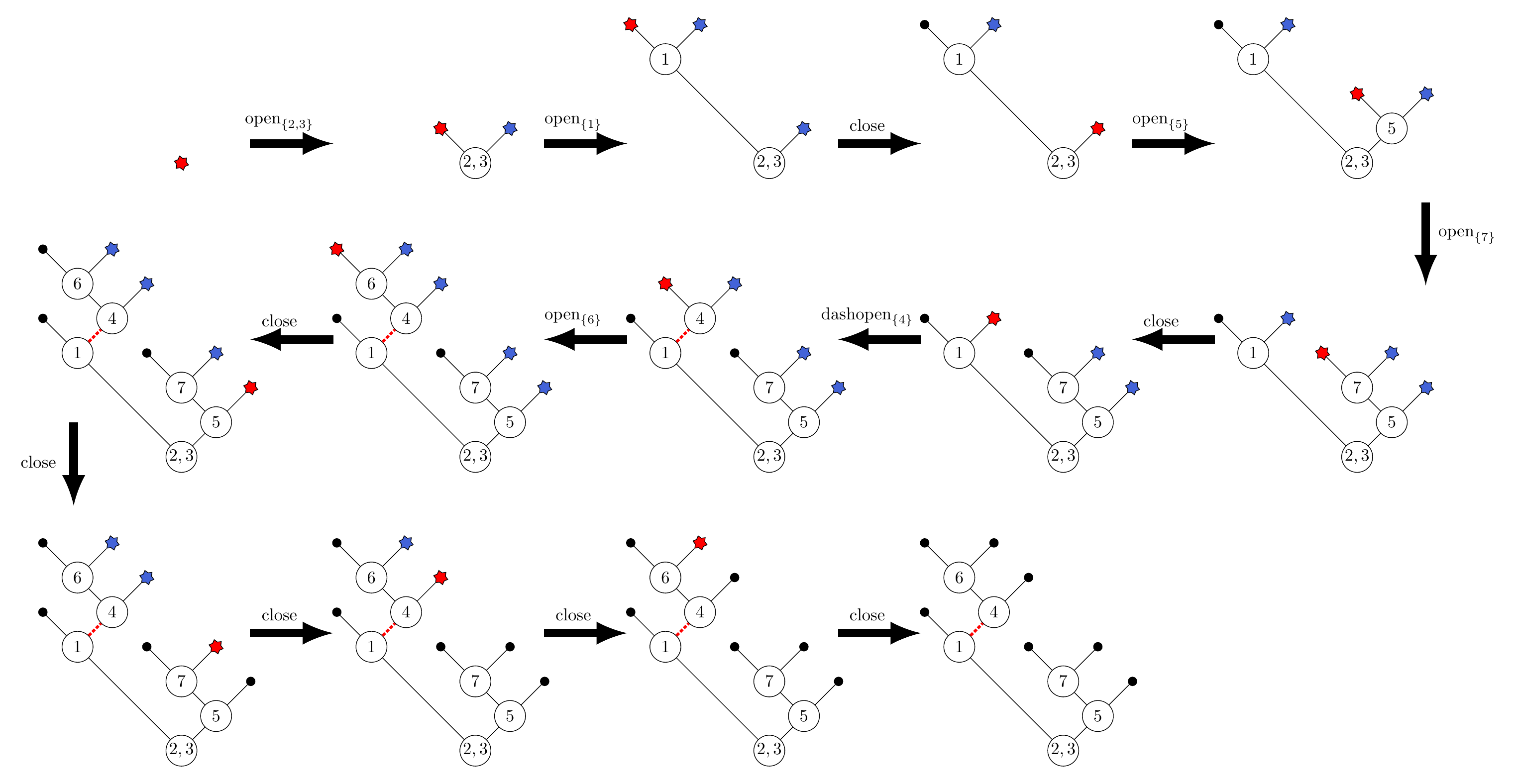}
      \caption[Example of $\mcattreeconstruct$]{The intermediary trees $R_0,\dots,R_{12}$ and final result of $\mcattreeconstruct(\canonicalmcatface)$ where $\canonicalmcatface$ has $\mnntriple_{\canonicalmcatface}$ given by \exampleref{example_cat_mnn}. The stars represent buds. The highlighted bud is the \specialword{first bud}.}
      \label{fig_matchmap_example}
    \end{figure}
    We start with the budding binary tree $R_0$, which consists of just a single bud. For each site $i$ of $\mnntriple$, we construct $R_i$ from $R_{i-1}$ according to the following three cases: 
    \begin{enumerate}
      \item If $i$ is a type $s$ site for $s>0$, then apply a \specialword{close} operation to $R_{i-1}$.
      \item If $i$ is a type 0 site and $\mnnzeroinv(i) \not\subset \mnndashedcup$, then let $S = \mnnzeroinv(i)$ and apply a \specialword{$\text{open}_S$} operation to $R_{i-1}$. 
      \item If $i$ is a type 0 site and $\mnnzeroinv(i) \subset \mnndashedcup$, then apply a \specialword{$\text{dashopen}_S$} operation to $R_{i-1}$, with $S = \mnnzeroinv(i)$. 
    \end{enumerate}
    Finally $R_h$, the last tree, will have exactly one bud remaining (proved below). Apply a \specialword{close} operation, and define $\mcattreeconstruct(\canonicalmcatface)$ to be the resulting tree. We will soon show that $T = \mcattreeconstruct(\canonicalmcatface)$ is a well-defined \mtree, and that $\mcatmap(T) = \canonicalmcatface$ (that is, $\canonicalmcatface$ satisfies the conditions \mcatconditions of \thmref{thm_mcat_bijection}).
    \begin{example} \label{example_mcat_matchmap}
      As illustrated in \figref{fig_matchmap_example} the $\mnntriple$ described in \exampleref{example_cat_mnn} is mapped by $\mcattreeconstruct$ to the $[7]$-decorated binary tree in \figref{fig_seven_decorated_tree}. The reader can verify that the $\mnntriple$ described in \exampleref{example_mcat_mnn} is mapped by $\mcattreeconstruct$ to the $[9]$-decorated 3-ary tree in \figref{fig_nine_decorated_tree}.
    \end{example}
    To prove that this construction is well-defined, there are two main issues to resolve. First, we need to prove that there are always enough buds so that $R_{i}$ is well-defined for all $i$. Second, we need to prove that the final tree $T$ satisfies the definition of \mtree, namely, that any dashed edges are cadet edges, and no buds remain. We will then show that this construction produces a tree $T$ such that $\mcatmap(T) = \canonicalmcatface$, the $m$-Catalan face with which we started. We will use the following definition and lemma. 
    \begin{definition}
      Let $R$ be a budding $(m+1)$-ary tree, let $k \in [n]$, and $s \in [0,m]$. We say $k$ is \emphnew{$s$-open} in $R$ if $R$ has a node containing $k$ and if the child of rank $s$ of that node is a bud. We say $k$ is \emphnew{$s$-closed} in $R$ if $R$ has a node containing $k$ and if the child of rank $s$ of that node is not a bud (either a node or true leaf).
    \end{definition}
    \begin{lemma} \label{lem_m_sharp_bud_ri}
      Let $R_0,\dots,R_h$ be the intermediate trees in the construction of $T = \mcattreeconstruct(\canonicalmcatface)$. For $k \in [n]$ and $s \in [0,m]$ and $i \in [h]$, we have:
      \begin{enumerate}
        \item $k$ is $s$-open in $R_i$   if and only if $\mnnzero(k) \leq i \leq \mnnsub{s}(k)$
        \item $k$ is $s$-closed in $R_i$ if and only if $\mnnsub{s}(k) < i$
      \end{enumerate}
    \end{lemma}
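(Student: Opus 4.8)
The plan is to prove, by induction on $i \in \{0, 1, \ldots, h\}$, a statement slightly stronger than the lemma that also records the $\prec_{R_i}$-order of the buds of $R_i$. Call a pair $(k,s) \in [n] \times [0,m]$ an \emph{open slot at stage $i$} when $\mnnzero(k) \leq i \leq \mnnsub{s}(k)$, and call $\mnnsub{s}(k)$ its \emph{value}. One observation will be used repeatedly: the second coordinate $-s$ separates the elements of $W_p$, so two pairs $(k,s) \neq (k',s')$ can have equal value only if $p_k = p_{k'}$ and $s = s'$, and since indices sharing a $p$-value lie in one node label, two open slots of equal value always name the same physical child of the same node. The strengthened claim $P(i)$ is: (a) if $\mnnzero(k) > i$, then no node of $R_i$ contains $k$; (b) if $\mnnzero(k) \leq i$, then $R_i$ has a node whose label is $\mnnzeroinv(\mnnzero(k))$, and its child of rank $s$ is a bud if and only if $i \leq \mnnsub{s}(k)$; and (c) reading the buds of $R_i$ in $\prec_{R_i}$-order lists exactly the open slots at stage $i$ in increasing order of value. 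Clause (b) is precisely parts (1) and (2) of the lemma.

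The base case $i=0$ is immediate, since $R_0$ is a lone bud with no node. For the inductive step from $R_{i-1}$ to $R_i$, the pivotal fact is that the first bud of $R_{i-1}$ has value exactly $i-1$ (taking $i \geq 2$; for $i=1$ it is the lone bud of $R_0$): by $P(i-1)$(c) its value is the least value of an open slot at stage $i-1$, every such value is $\geq i-1$ by definition, and the slot of value $i-1$ genuinely is open at stage $i-1$ because $i-1$ is a site, so some $(k,s)$ has $\mnnsub{s}(k) = i-1$, and then $\mnnzero(k) \leq i-1$ puts the node labelled $\mnnzeroinv(\mnnzero(k))$ into $R_{i-1}$ with a bud in the child slot of rank $s$, by $P(i-1)$(b). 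Hence the operation at step $i$ acts on the bud of value $i-1$, which is the child of rank $s$ of the node carrying those $k$ with $\mnnsub{s}(k) = i-1$, where $s$ is the type of the site $i-1$; and which of the three construction rules applies at step $i$ depends only on whether site $i$ has type $0$ (then $\text{open}_S$, or $\text{dashopen}_S$ if $\mnnzeroinv(i) \subset \mnndashedcup$) or not (then \emph{close}). For clause (b): that bud changes from a bud into a non-bud, which matches the transition from ``$i-1 \leq \mnnsub{s}(k)$'' to ``$i \leq \mnnsub{s}(k)$''; and if site $i$ has type $0$, a new node labelled $\mnnzeroinv(i)$ with $m+1$ fresh bud-children appears, its child of rank $s$ having value $\mnnsub{s}(k)$ for $k \in \mnnzeroinv(i)$, so $P(i)$(b) follows once one notes that $\mnnzero(k) = i$ is possible only when site $i$ has type $0$, so no node of the relevant label pre-exists at a non-type-$0$ step. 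For clause (c): deleting the minimum-value bud leaves the survivors in increasing-value order with new minimum $i$, and when a new node replaces that bud, \precconditiontwo places its child of rank $0$ (of value $i$, the new global minimum) directly after it, while \precconditionthree dictates how its children of rank $\geq 1$ interleave with the surviving older buds.

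The step I expect to be the genuine obstacle is verifying clause (c) at an $\text{open}_S$ or $\text{dashopen}_S$ step, namely that the $m$ new buds of rank $\geq 1$ fall into the $\prec_{R_i}$-order of the older buds in exactly increasing order of value. The value bookkeeping alone does not see this; it has to be extracted from the recursive characterization of $\prec$ in \precconditions. I would compare a new child of rank $r$ against an older bud by iterating \precconditionthree, replacing each vertex by its ``left'' sibling, until I reach the new node's child of rank $0$, whose position among the older buds is already pinned down (it lies directly after the new node, which occupies the slot vacated by the deleted bud), and then read off the comparison there; \lemref{lem_next_of_dead_leaf} handles any dead leaves produced along dashed edges.
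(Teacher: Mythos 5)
Your strategy is a legitimate variant of the paper's: the paper also inducts on $i$, but carries the weaker invariant that $\mathcal{O}_s(R_i)=\{k : \mnnzero(k)\leq i\leq \mnnsub{s}(k)\}$, and at each step only identifies the \emph{first} bud of $R_{i-1}$. In the hard case (site $i-1$ of type $u>0$) it does so by reducing any competing open bud $v^{u'}(z)$ to the minimal open rank for $z$, comparing the \emph{closure times} of the rank-$(u-1)$ and rank-$(u'-1)$ siblings via the inductive hypothesis, and applying \precconditionthree once. Your clause (c) is a global strengthening (the full $\prec_{R_i}$-order of the buds agrees with the order of their values), which is in fact equivalent to the first-bud identification at every stage, so the invariant itself is correct; the base case quibble at $i=0$ is harmless.

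The gap is that the step you yourself flag as the obstacle is not actually carried out, and the sketch as written does not cover it. First, iterating \precconditionthree on the pair (new child of rank $r$, old bud of rank $s'$) decrements \emph{both} ranks, so when $r>s'$ the descent bottoms out at the rank-$0$ child of the \emph{old} node, not of the new one; your sketch only describes the case where the new node's rank-$0$ child is reached. Second, ``read off the comparison there'' silently requires three facts you never state or prove: (i) the value comparison is invariant under the simultaneous rank decrement, i.e.\ $\mnnsub{r}(k)<\mnnsub{s'}(k')$ iff $\mnnsub{r-1}(k)<\mnnsub{s'-1}(k')$ (this follows from the lexicographic structure of $W_p$ and is the real reason the method works, so it must appear); (ii) every non-bud vertex of $R_i$ precedes every bud in $\prec_{R_i}$ (needed when the descent lands on a slot that is already closed, e.g.\ the rank-$0$ child of the old node, or a closed intermediate slot $(k',s'-r)$); and (iii) the rank-$0$ child of the new node is the $\prec_{R_i}$-minimum bud of $R_i$ (needed when the descent lands on a slot that is still open). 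With (i)--(iii) supplied and the extra case handled, your argument closes; without them, the decisive comparison is asserted rather than proved. Note that \lemref{lem_next_of_dead_leaf}, which you invoke for this purpose, concerns dead leaves next to captive nodes and does not substitute for any of (i)--(iii).
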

    \begin{proof}
      For $i \in [h]$, and $s \in [0,m]$, let 
      \begin{align}
        \mathcal{O}_s(R_i) &:= \{k \in [n] \given \text{$k$ is $s$-open in $R_i$}\},
      \end{align}
      and
      \begin{align}
        \mathcal{D}_s(R_i) &:= \{k \in [n] \given \text{$k$ is $s$-closed in $R_i$}\}.
      \end{align}
      We must show that 
      \begin{align} \label{eq_open_s_ri}
        \mathcal{O}_s(R_i) &= \{k \in [n] \given \mnnzero(k) \leq i \leq \mnnsub{s}(k)\}, 
      \end{align}
      and
      \begin{align}
        \mathcal{D}_s(R_i) &= \{k \in [n] \given \mnnsub{s}(k)+1 \leq i\}.
      \end{align}
      Since $k$ becomes $s$-closed as soon as $k$ becomes not $s$-open, it is enough to just show \eref{eq_open_s_ri}.
      By induction on $i \in [h]$, it suffices to show
      \begin{align} 
        \mathcal{O}_s(R_i) = \mathcal{O}_s(R_{i-1}) \smallsetminus \mnnsubinv{s}(i-1) \cup \mnnzeroinv(i), \label{eq_whosopen_induction_s} 
      \end{align}
      the claim \eref{eq_open_s_ri} being clearly true for $R_1$.
      There are two cases:

      \caseif{0}{$i-1$ is a type 0 site.} 
        Then $R_{i-1}$ was obtained from $R_{i-2}$ by replacing its \specialword{first bud} by a node with label $\mnnzeroinv(i-1)$ and $m+1$ buds. 
        Since $\prec_{R_{i-1}}$ obeys \precconditiontwo, the child of rank $0$ of the new node must be the \specialword{first bud} of $R_{i-1}$. 
        It follows that $R_i$ is obtained from $R_{i-1}$ by replacing the child of rank $0$ of the node with label $\mnnzeroinv(i-1)$, either by a node with label $\mnnzeroinv(i)$ if $i$ is a type 0 site, or simply a leaf if $i$ is a type $u>0$ site.  In either case, equation \eref{eq_whosopen_induction_s} is satisfied (in this case, $\mnnsubinv{s}(i-1)=\varnothing$).

      \caseif{1}{$i-1$ is a type $u$ site for $u>0$.} 
        Let $k$ be such that $\mnnsub{u}(k) = i-1$. We claim that for any $z$ and $u'$ such that $z \in \mathcal{O}_{u'}(i-1)$ we have 
        \begin{align} \label{eq_firstbud_rim1}
          v_{R_{i-1}}^{u}(k) \preceq_{R_{i-1}} v_{R_{i-1}}^{u'}(z).
        \end{align}
        It follows that the \specialword{first bud} of $R_{i-1}$ is the child of rank $u$ of the node with label $\mnnsubinv{u}(i-1)$. Therefore $R_i$ is obtained from $R_{i-1}$ by replacing this bud by either a node if $i$ is type 0 or simply a leaf for all other types. In either case \eref{eq_whosopen_induction_s} is satisfied. Thus it remains only to prove \eref{eq_firstbud_rim1}. 

        It follows by the inductive hypothesis that $\mathcal{O}_0(R_{i-1}) = \varnothing$, and so we have $u'>0$. 
        We may also assume that $z \notin \mathcal{O}_{u'-1}(i-1)$, otherwise replace $u'$ by $u'-1$. 
        It follows by the inductive hypothesis with $s=u'$ that $i-1 \leq \mnnsub{u'}(z)$. Since $i-1 = \mnnsub{u}(k)$ we have 
        \begin{align}
          \mnnsub{u}(k) \leq \mnnsub{u'}(z),
        \end{align}
        and therefore
        \begin{align} \label{eq_k_um1_z_upm1}
          \mnnsub{u-1}(k) \leq \mnnsub{u'-1}(z).
        \end{align}
        Since $z \notin \mathcal{O}_{u'-1}(i-1)$, we know $\mnnsub{u'-1}(z)+1 \leq i-1$. Therefore, by the inductive hypothesis (strong induction with $s=u'-1$) we know that $\mnnsub{u'-1}(z)+1$ is the first site such that $z$ is $(u'-1)$-closed. Similarly, $\mnnsub{u-1}(k)+1$ is the first site such that $k$ is $(u-1)$-closed. Combined with \eref{eq_k_um1_z_upm1}, this shows that $k$ became $(u-1)$-closed before $z$ was $(u'-1)$-closed, and therefore 
        \begin{align} \label{eq_firstbud_rim1_helper}
          v_{R_{i-1}}^{u-1}(k) \preceq_{R_{i-1}} v_{R_{i-1}}^{u'-1}(z).
        \end{align}
        Then \eref{eq_firstbud_rim1} follows from \eref{eq_firstbud_rim1_helper} and \precconditionthree.
    \end{proof}
    We now show that $\mcattreeconstruct$ well-defined:
    \begin{corollary}
      Let $R_0, \dots, R_h$ be the intermediate trees in the construction of $T = \mcattreeconstruct(\canonicalmcatface)$. 
      \begin{enumerate}
        \item Every $R_i$ contains a positive number of buds, and $R_h$ contains exactly one bud. 
        \item Any edges that are dashed in $R_i$ are cadet edges.
      \end{enumerate}
      Thus, $T = \mcattreeconstruct(\canonicalmcatface)$ is a well-defined \mtree.
    \end{corollary}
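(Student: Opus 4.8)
The plan is to control $b_i$, the number of buds of $R_i$, for part~(1), and to analyse the local structure created by each $\mathrm{dashopen}$ for part~(2); both arguments build on \lemref{lem_m_sharp_bud_ri}, and part~(2) additionally uses \lemref{lem_mnndashed_after_one}.

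For part~(1), I would read $b_i$ off the operations directly. We have $b_0=1$; a type-$0$ site (an $\mathrm{open}$ or $\mathrm{dashopen}$) replaces one bud by a node with $m+1$ bud-children, raising the bud count by $m$; and a type-$s$ site with $s>0$ (a $\mathrm{close}$) lowers it by $1$. Hence $b_i = 1 + m\,a_i - c_i$, where $a_i$ is the number of type-$0$ sites in $[1,i]$ and $c_i$ is the number of type-$s$ sites in $[1,i]$ with $s>0$. The chain $(p_k,0)\lesslex(p_k+1,-1)\lesslex\cdots\lesslex(p_k+m,-m)$ in $W_p$ gives $\mnnzero(k)\le\mnnsub{1}(k)\le\cdots\le\mnnsub{m}(k)$ for every $k$, so for each fixed $s\in[1,m]$ the number of type-$s$ sites in $[1,i]$ is at most $a_i$; summing over $s$ gives $c_i\le m\,a_i$ and hence $b_i\ge1$ for all $i\in\{0,\dots,h\}$, which is exactly what is needed for every operation (including the last $\mathrm{close}$) to be defined. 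Equality holds at $i=h$: since $\mnnsub{s}(k)=\mnnsub{s}(k')$ exactly when $p_k=p_{k'}$, all of $\mnnsub{0},\dots,\mnnsub{m}$ have image of the same size $N:=|\image(\mnnzero)|$, and these $m+1$ images are pairwise disjoint, so $h=(m+1)N$ and $c_h=mN=m\,a_h$, giving $b_h=1$. (Equivalently, \lemref{lem_m_sharp_bud_ri} identifies the buds of $R_i$ with the pairs $(v,s)$, $v$ a node, such that $\mnnzero(k)\le i\le\mnnsub{s}(k)$ for $k\in\text{label}(v)$.)

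Part~(2) is the substantive part. A dashed edge is created only by a $\mathrm{dashopen}_S$ operation, performed at a type-$0$ site $i$ with $S=\mnnzeroinv(i)\subseteq\mnndashedcup$; write $N$ for the resulting captive node and $P$ for its parent. I must show $N$ is the cadet of $P$, i.e.\ that $N$ has rank $>0$ and that no sibling of $N$ of larger rank is ever a node. The first ingredient is an auxiliary claim proved by a short induction on $j$ using \precconditiontwo and \precconditionthree: the non-bud vertices of $R_j$ form an initial segment of the $\prec$ order, so the \specialword{first bud} of $R_j$ is the immediate $\prec$-successor of the vertex processed at step $j$, and it has rank $0$ precisely when site $j$ has type $0$. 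Next I would show that site $i-1$ has type $>0$: otherwise $\mnnzero(k)=\mnnzero(k')+1$ for some $k\in S$ and some $k'$ in the label created at step $i-1$, which forces $(p_{k'},0)$ and $(p_k,0)$ to be consecutive elements of $W_p$; but $k\in\mnndashedsub{s}$ puts $(p_k,-s)=(p_j+s,-s)\in W_p$ strictly between them, a contradiction (a quick check rules out the boundary case $i=1$). Therefore step $i-1$ is a $\mathrm{close}$, so by the auxiliary claim $N$ --- the first bud of $R_{i-1}$ --- has rank $>0$.

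It remains to rule out any sibling of $N$ of larger rank ever becoming a node. Unwinding the same inequalities shows that site $i-1$ is in fact a \emph{dash site} (one obtains $p_{k^*}+u=p_k$ where $\mnnsub{u}(k^*)=i-1$, hence $S\subseteq\mnndashedsub{u}$), and then \lemref{lem_mnndashed_after_one} lets one propagate this dash-site relation through the siblings of $N$ of larger rank: each of them becomes the \specialword{first bud} only at a site that is itself a dash site, hence of type $>0$, hence processed by a $\mathrm{close}$, so it ends up as a (dead) leaf rather than a node. I expect this propagation to be the main obstacle, since it is the one place where the evolving tree shape must be matched against the global combinatorics of $\mnntriple$. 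Granting it, $N$ is the rightmost node-child of $P$ of rank $>0$ --- the cadet of $P$ --- so $PN$ is a cadet edge. Assembling the pieces: by part~(1) the construction runs to completion with $R_h$ carrying a single bud that the final $\mathrm{close}$ removes; the labels $\mnnzeroinv(i)$ over type-$0$ sites $i$ are pairwise disjoint with union $[n]$, so they partition $[n]$; and by part~(2) every dashed edge is a cadet edge. Hence $T=\mcattreeconstruct(\canonicalmcatface)$ is a well-defined \mtree.
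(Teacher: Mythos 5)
Your proposal is correct, and for the substantive part (2) it follows essentially the same route as the paper: show that the site $i-1$ immediately preceding a $\mathrm{dashopen}$ must be a dash site of some type $u>0$ (so the new captive node has rank $u>0$ and sits below the node containing $\mnnsubinv{u}(i-1)$), then use \lemref{lem_mnndashed_after_one} to see that for $v\geq 1$ the site following $\mnnsub{u+v}(k^*)$ has type $v>0$, so all higher-rank siblings get closed into leaves and the captive node is the cadet. For part (1) you take a slightly different path: the paper reads the buds of $R_i$ directly off \lemref{lem_m_sharp_bud_ri} (any $k$ with $\mnnzero(k)\leq i\leq\mnnsub{s}(k)$ witnesses an $s$-open bud, and the lemma also gives that $R_h$ has exactly one bud), whereas you count buds globally via $b_i=1+m\,a_i-c_i$ together with $c_i\leq m\,a_i$; both work, and your count has the minor advantage of yielding $b_h=1$ by pure arithmetic from the disjointness and equal cardinality of the images of $\mnnsub{0},\dots,\mnnsub{m}$. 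One phrasing in your part (2) is slightly off: a higher-rank sibling of the captive node is processed at the step \emph{following} the dash site $\mnnsub{u+v}(k^*)$, namely at $\mnnsub{v}(k)$, and it is the positive type of \emph{that} site (not of the dash site itself) that makes the operation a $\mathrm{close}$; the content is right, but the indexing should be stated as in \lemref{lem_mnndashed_after_one}.
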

    \begin{proof}
      For (1): first $R_0$ by definition contains one bud. For $i \in [h]$, if $i$ is a type 0 site, then $\mnnzeroinv(i) \neq \varnothing$, and by \lemref{lem_m_sharp_bud_ri} any $k \in \mnnzeroinv(i)$ is $0$-open in $R_i$. Therefore $R_i$ contains at least one bud. If $i$ is a type $s$ site, then $\mnnsubinv{s}(i) \neq \varnothing$, and by \lemref{lem_m_sharp_bud_ri} any $k \in \mnnsubinv{s}(i)$ is $s$-open in $R_i$, and so again $R_i$ contains at least one bud. It is clear from \lemref{lem_m_sharp_bud_ri} that $R_{h}$ has exactly one bud.

      For (2), \specialword{dashopen} operations occur when creating trees $R_i$ where $i$ satisfies $\mnnzeroinv(i) \subset \mnndashedcup$. It is easy to show, from the definition of $\mnnsub{s}$ and $\mnndashedsub{s}$, that $i-1$ must be a dash site, and therefore is of type $s>0$.
      For any $j \in \mnnsubinv{s}(i-1)$, it follows from \lemref{lem_m_sharp_bud_ri} that $j$ is $s$-open in $R_{i-1}$ and is $s$-closed in $R_{i}$, which means the \specialword{first bud} of $R_{i-1}$ is the child of rank $s$ of the node containing $j$. 
      Therefore, the \specialword{dashopen} operation will create a captive node at the child of rank $s$ of $j$. 
      This proves that only edges of rank $>0$ will ever be dashed. 

      From \lemref{lem_mnndashed_after_one}, for every $u \in [s,m]$, the site immediately following $\mnnsub{u}(j)$ is a type $s'$ site for some $s'>0$. It follows from this and \lemref{lem_m_sharp_bud_ri} that the children of $v_T(j)$ of rank $>s$ will all be leaves. This shows that any dashed edges are indeed cadet edges. 
    \end{proof}
    Now that $\mcattreeconstruct$ is well-defined, we can show that $\mcatmap \circ \mcattreeconstruct$ is the identity.
    \begin{proposition} \label{prop_mcat_surjective}
      With $T = \mcattreeconstruct(\canonicalmcatface)$, for any point $p \in \canonicalmcatface$ we have
      \begin{enumerate}
        \item For all $i,j \in [n]$, $p_i \leq p_j$ if and only if $v_T(i) \preceq_T v_T(j)$.
        \item For all $i,j \in [n]$, and $s \in [1,m]$, $p_i < p_j+s$  if and only if $v_T(i) \prec_{T} v^s_T(j)$.
        \item For all $i,j \in [n]$, and $s \in [1,m]$, $p_i = p_j+s$ if and only if $v_T(i) = \nextlive(v^s_T(j))$ and there is a dashed path from $v_T(j)$ to $v_T(i)$.
      \end{enumerate}
      Whence, $\mcatmap(T) = \canonicalmcatface$.
    \end{proposition}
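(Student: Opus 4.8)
The plan is to derive each of conditions \mcatconditionone, \mcatconditiontwo, \mcatconditionthree\ by chaining two translations. The first, \propref{prop_mnn_bijection}, re-expresses the geometric relations among the coordinates of a point $p\in\canonicalmcatface$ in terms of the $m$-Catalan code $\mnntriple_{\canonicalmcatface}$ (the choice of $p$ is immaterial by \corref{cor_mcatface_eta}). The second, \lemref{lem_m_sharp_bud_ri}, relates that code to the combinatorics of the intermediate trees $R_0,\dots,R_h$, hence of $T=\mcattreeconstruct(\canonicalmcatface)$. The bridge between the two is a structural description of $T$ itself, which I would record first: each of \specialword{close}, \specialword{$\text{open}_S$}, \specialword{$\text{dashopen}_S$} replaces the current \specialword{first bud} by a vertex sitting at the same path $\rho$, and by \precconditiontwo, a rank-$0$ child produced by an open immediately follows its node in $\prec$-order. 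Writing $b_t$ for the bud processed at step $t$ (and $b_{h+1}$ for the lone bud left in $R_h$), it follows that the vertices of $T$ in $\prec_T$-order are exactly $b_1\prec_T\dots\prec_T b_{h+1}$; that $b_t$ is a node precisely when $t$ is a type-$0$ site, and then $\text{label}(b_t)=\mnnzeroinv(t)$; and that for $s\in[1,m]$ the rank-$s$ child of $v_T(j)$ first becomes the \specialword{first bud} at step $\mnnsub{s}(j)+1$ (using \lemref{lem_m_sharp_bud_ri} and its proof). In particular $v_T(k)=b_{\mnnzero(k)}$ and $v^s_T(j)=b_{\mnnsub{s}(j)+1}$.

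Given these identifications, \mcatconditionone and \mcatconditiontwo are immediate: $v_T(i)\preceq_T v_T(j)\iff\mnnzero(i)\le\mnnzero(j)\iff p_i\le p_j$ by \propref{prop_mnn_bijection}(1); and $v_T(i)\prec_T v^s_T(j)\iff\mnnzero(i)\le\mnnsub{s}(j)\iff\mnnzero(i)<\mnnsub{s}(j)\iff p_i<p_j+s$ by \propref{prop_mnn_bijection}(2), where the middle equivalence uses $\image(\mnnzero)\cap\image(\mnnsub{s})=\varnothing$.

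The real work is \mcatconditionthree. By \propref{prop_mnn_bijection}(3), $p_i=p_j+s$ holds exactly when the interval $[\mnnsub{s}(j),\mnnzero(i)-1]$ is nonempty and consists entirely of dash sites, and I must match this with ``$v_T(i)=\nextlive(v^s_T(j))$ and there is a dashed path from $v_T(j)$ to $v_T(i)$''. The engine is the following reading of one dash site: if $t$ is a dash site of type $\sigma$ and $t+1=\mnnsub{s'}(j')$ with $s'<\sigma$, then \lemref{lem_mnndashed_after_one} (taken with $u=-s'$) gives $\mnnsub{\sigma-s'}(j)+1=\mnnzero(j')$, again a dash site, and $\mnnzeroinv(\mnnzero(j'))\subset\mnndashedcup$; hence step $\mnnzero(j')$ is a \specialword{dashopen}, so $b_{\mnnzero(j')}=v_T(j')$ is a captive node, which by the construction is the rank-$(\sigma-s')$ child of $v_T(j)$. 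Thus each dash site records one dashed edge of $T$, and \lemref{lem_mnndashed_after_one} ensures consecutive dash sites yield consecutive edges of a single dashed path. For the forward direction I would induct on $\mnnzero(i)-\mnnsub{s}(j)$: applying the above to the dash site $\mnnsub{s}(j)$ gives a dashed edge $v_T(j)\to v_T(j')$ with $\mnnsub{s'}(j')=\mnnsub{s}(j)+1$ and $s'<s$; then \propref{prop_mnn_bijection}(3) on the sub-interval $[\mnnsub{s'}(j'),\mnnzero(i)-1]$ yields $p_i=p_{j'}+s'$, so the inductive hypothesis produces a dashed path from $v_T(j')$ to $v_T(i)$, to which we prepend $v_T(j)\to v_T(j')$. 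The left endpoint rises by one at each step, so it eventually equals $\mnnzero(i)$; disjointness of images then forces $s'=0$, i.e.\ $v_T(j')=v_T(i)$, ending the recursion. That $v_T(i)=\nextlive(v^s_T(j))$ then follows because the vertices strictly between $v^s_T(j)=b_{\mnnsub{s}(j)+1}$ and $v_T(i)=b_{\mnnzero(i)}$ are exactly the $b_t$ with $t$ an interior dash site; each such $b_t$ is a true leaf (a \specialword{close} at a type-$>0$ site) that, by the dash-site reading applied one rank lower, has a captive left-sibling and so is a dead leaf, whence $\nextlive$ (computed via \lemref{lem_next_of_dead_leaf}) skips precisely these and stops at the live node $v_T(i)$. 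The converse reverses the same correspondence: a dashed path $v_T(j)=u_0,\dots,u_\ell=v_T(i)$ has each $u_{r+1}$ a captive node, hence $u_{r+1}=b_{\mnnzero(j_{r+1})}$ with $\mnnzero(j_{r+1})-1$ a dash site, and the hypothesis $v_T(i)=\nextlive(v^s_T(j))$ together with \lemref{lem_next_of_dead_leaf} pins down the chain so that the associated sites fill out exactly $[\mnnsub{s}(j),\mnnzero(i)-1]$. This establishes conditions \mcatconditionone, \mcatconditiontwo, \mcatconditionthree\ for every $p\in\canonicalmcatface$, so $\canonicalmcatface$ fits the description of $\mcatmap(T)$ and therefore $\mcatmap(T)=\canonicalmcatface$.

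The only step I expect to be genuinely delicate is \mcatconditionthree: one must keep track simultaneously of a dash site, its type, the captive node or dead leaf it produces, the rank that vertex occupies below its parent, and the way \lemref{lem_mnndashed_after_one} threads these from one step to the next so the dashed edges concatenate into a single dashed path. Everything else amounts to substituting \lemref{lem_m_sharp_bud_ri} into \propref{prop_mnn_bijection}.
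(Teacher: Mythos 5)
Your proposal is correct and follows essentially the same route as the paper: conditions (1) and (2) by substituting Lemma~\ref{lem_m_sharp_bud_ri} (via the identifications $v_T(k)=b_{\eta_0(k)}$, $v^s_T(j)=b_{\eta_s(j)+1}$) into Proposition~\ref{prop_mnn_bijection}, and condition (3) by induction on the length of the interval of dash sites, using Lemma~\ref{lem_mnndashed_after_one} with $u=-s'$ to peel off one dashed edge at a time and Lemma~\ref{lem_next_of_dead_leaf} to handle $\nextlive$. The only cosmetic differences are that you describe the dead leaves skipped by $\nextlive$ globally rather than invoking Lemma~\ref{lem_next_of_dead_leaf} once at the end, and you spell out the converse of (3), which the paper leaves implicit.
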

    \begin{proof}
      Part (1) is clear from the fact that the construction of $T$ takes place in the $\prec_T$ order. 

      For (2): By \lemref{lem_m_sharp_bud_ri}, the first appearance of a node containing $i$ is in the tree $R_{\mnnzero(i)}$, and the first tree in which $j$ is $s$-closed is $R_{\mnnsub{s}(j)+1}$.
      Thus, $v_T(i) \prec_{T} v^{s}_T(j)$ if and only if $\mnnzero(i) < \mnnsub{s}(j)+1$.
      Since a site cannot be both type $0$ and type $s>0$, $\mnnzero(i)$ cannot equal $\mnnsub{s}(j)$, so $\mnnzero(i) < \mnnsub{s}(j)+1$ is equivalent to $\mnnzero(i) < \mnnsub{s}(j)$.
      By \propref{prop_mnn_bijection}, $\mnnzero(i) < \mnnsub{s}(j)$ if and only if $p_i \lt p_j+s$, as desired.

      For (3): By \propref{prop_mnn_bijection}, we have $p_i=p_j+s$ if and only if every $t \in [\mnnsub{s}(j),\mnnzero(i)-1]$ is a dash site. 
      Let $L$ be the length of this interval; we will prove the claim by induction on $L$. 
      Assume first that $L=1$, that is, $\mnnsub{s}(j)$ is a dash site and $\mnnsub{s}(j)+1=\mnnzero(i)$. 
      It then follows from the definition of $\mcattreeconstruct$ and \lemref{lem_m_sharp_bud_ri} that $T$ has a dashed edge of rank $s$ with parent $v_T(j)$ and child $v_T(i)$.
      Thus, in this case $v_T(i) = v^s_T(j)$ and so the claim is certainly true for $L=1$.

      If $L>1$, let $s',j'$ be such that the interval of dash sites starts $[\mnnsub{s}(j),\mnnsub{s'}(j'),\dots]$. In particular, this means every $t \in [\mnnsub{s'}(j'),\dots, \mnnzero(i)-1]$ is a dash site. By the inductive hypothesis, and \propref{prop_mnn_bijection}, we have $v_T(i) = \nextlive(v^{s'}_T(j'))$ and there is a dashed path from $v_T(j')$ to $v_T(i)$. 

      On the other hand, by \lemref{lem_mnndashed_after_one}, $\mnnsub{s-s'}(j)+1=\mnnsub{0}(j')$, and $\mnnsub{s-s'}(j)$ is a dash site. Therefore from the $L=1$ case proved above we have 
      \begin{align} \label{eq_vtj2_eq_nextnode_vstarj}
        v_T(j') = v^{s-s'}_T(j),
      \end{align}
      and there is a dashed edge between $v_T(j)$ and $v_T(j')$. 

      Combining facts, we have a dashed path from $v_T(j)$ to $v_T(j')$ and a dashed path from $v_T(j')$ to $v_T(i)$, and therefore we have one from $v_T(j)$ to $v_T(i)$, as desired. 

      It remains to show $v_T(i) = \nextlive(v^s_T(j))$. So far, we have $v_T(i) = \nextlive(v^{s'}_T(j'))$ and \eref{eq_vtj2_eq_nextnode_vstarj}. So the claim may be rewritten as 
      \begin{align} \label{eq_next_child_eq_next_child}
        \nextlive (\text{the child of $v_T^{s-s'}(j)$ of rank $s'$}) = \nextlive ( v^{s}_T(j) ).
      \end{align}
      Note that, since $v_T(j')$ is a captive child of $v_T(j)$ of rank $s-s'$, the vertex $v^{s}_T(j)$ in question must be a dead leaf. With this in mind, equation \eref{eq_next_child_eq_next_child} is a consequence of \lemref{lem_next_of_dead_leaf} (in the notation of \lemref{lem_next_of_dead_leaf}, we have $v=v^{s-s'}_T(j)$ with $\ell=v^{s}_T(j)$ and $k=s'$).
    \end{proof}
    \propref{prop_mcat_surjective} proves that $\mcatmap$ is surjective, which completes the proof of \thmref{thm_mcat_bijection}.

\section{Proof that \texorpdfstring{$\mshimap$}{map for m-Shi faces} is a bijection} \label{sec_proof_mshi}
  In this section we will prove \thmref{thm_mshi_bijection}, which implies \thmref{thm_shi_bijection}.
  As in \secref{sec_proof_mcatalan}, we separate this proof into five distinct elements: existence and uniqueness of the $m$-Shi face, injectivity and surjectivity of $\mshimap$, and the grading by dimension. 
  The ``core claim'' of this section is that each $m$-Shi face contains exactly one $m$-Catalan face whose corresponding tree is of Shi type. That there exists at most one is equivalent to the claim that $\mshimap$ is injective. That there exists at least one is equivalent to the claim that $\mshimap$ is surjective.
  The other three elements: existence, uniqueness, and the grading by dimension, are somewhat more straightforward consequences of \thmref{thm_mcat_bijection}, and we start with these. Throughout this section, $T$ is a given \mtree.

  \subsection{Existence, uniqueness, and grading by dimension} \label{subsec_mshi_proof_existence}
    Uniqueness is immediate, since the hyperplanes of the $m$-Shi arrangement are of the form $x_i = x_j+s$ for $s \in [0,m-1]$ for all $i,j \in [n]$, and $x_i = x_j+m$ for $(i,j) \in \pairsilessthanj$. 

    For existence, first let $\canonicalmcatface := \mcatmap(T)$. It is straightforward to show from \thmref{thm_mcat_bijection} that any point $p \in \canonicalmcatface$ will satisfy all of \mshiconditions of \thmref{thm_mshi_bijection}. It follows that the unique $m$-Shi face that contains all points of $\canonicalmcatface$ will satisfy the requirements of \thmref{thm_mshi_bijection}, and so existence is proven.

    Next we will show that for any \mtree $T$ of Shi type,
    \begin{align}
      \text{dim}(\mshimap(T)) = \# \{\text{free nodes of $T$}\},
    \end{align}
    as claimed in \thmref{thm_mshi_bijection}. It is enough to show that 
    \begin{align}
      \text{dim}(\mshimap(T)) = \text{dim}(\mcatmap(T)).
    \end{align}
    We begin with an important remark, which we will refer to again in the following subsection.
    \begin{remark} \label{rem_mshi_face_union_mcat}
      Since the $m$-Shi arrangement is obtained by deleting hyperplanes from the $m$-Catalan arrangement, for every $m$-Shi face $\canonicalmshiface$, there exist $m$-Catalan faces $\canonicalmcatface_1, \dots, \canonicalmcatface_{\ell}$ such that $\canonicalmshiface$ is the disjoint union of the $\canonicalmcatface_{i}$.
      Furthermore, it is clear that 
      \begin{align} \label{eq_dim_mshiface_equals_max}
        \text{dim}(\canonicalmshiface) = \max_{i} \, \text{dim}(\canonicalmcatface_i).
      \end{align}
    \end{remark}
    Let the inverse of $\mcatmap$ be denoted $\emphnewmath{\mcatmapinv}$.
    The main result of this subsection is the following:
    \begin{proposition} \label{prop_mshi_top_dimension}
      Let $\canonicalmshiface$ be any Shi face. Let $\canonicalmcatface_1,\dots,\canonicalmcatface_\ell$ be as in \remref{rem_mshi_face_union_mcat}. If $\canonicalmcatface_{i^*}$ has the property that $\mcatmapinv(\canonicalmcatface_{i^*})$ is of Shi type, then for any $i \in [\ell]$, 
      \begin{align} \label{eq_prop_dim_mshi_representative}
        \text{dim}(\canonicalmcatface_i) \leq \text{dim}(\canonicalmcatface_{i^*}),
      \end{align}
      and therefore $\text{dim}(\canonicalmshiface) = \text{dim}(\canonicalmcatface_{i^*})$.
    \end{proposition}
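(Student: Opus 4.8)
The plan is to prove the stronger fact that $\text{dim}(\mcatmap(T^*)) = \text{dim}(\canonicalmshiface)$, where $T^* := \mcatmapinv(\canonicalmcatface_{i^*})$ is the Shi-type tree and $\canonicalmshiface = \mshimap(T^*)$ (since $\canonicalmcatface_{i^*} \subseteq \canonicalmshiface$, the face $\canonicalmshiface$ is the unique $m$-Shi face containing all of $\mcatmap(T^*)$). Granting this equality, \eref{eq_prop_dim_mshi_representative} is immediate: by \remref{rem_mshi_face_union_mcat} one has $\text{dim}(\canonicalmcatface_i) \leq \text{dim}(\canonicalmshiface) = \text{dim}(\canonicalmcatface_{i^*})$ for every $i$, and the displayed equality $\text{dim}(\canonicalmshiface) = \text{dim}(\canonicalmcatface_{i^*})$ is exactly what was proved. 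So the whole proposition reduces to this one dimension computation.

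Since $\mcatmap(T^*) \subseteq \canonicalmshiface$ we have $\text{dim}(\mcatmap(T^*)) \leq \text{dim}(\canonicalmshiface)$, and by \thmref{thm_mcat_bijection} the left-hand side equals $k$, the number of free nodes of $T^*$. Thus everything comes down to the reverse inequality $\text{dim}(\canonicalmshiface) \leq k$, which I would prove by adapting the dimension upper-bound argument from the end of \subsecref{subsec_mcat_proof_existence}. Let $r_1 \prec_{T^*} \dots \prec_{T^*} r_k$ be the free nodes of $T^*$ and $j_t := \min(\text{label}(r_t))$; the goal is to show every point $p \in \canonicalmshiface$ is determined by the $k$ values $p_{j_1},\dots,p_{j_k}$. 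As established in the existence part of \subsecref{subsec_mshi_proof_existence}, $\canonicalmshiface$ satisfies conditions \mshiconditions relative to $T^*$. By \mshiconditionone the coordinates of $p$ are constant on each node-label of $T^*$, so knowing $p_{j_t}$ determines $p$ on $\text{label}(r_t)$, and it remains to determine $p$ on the labels of the captive nodes.

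Fix a captive node $w$ and let $u_0 \prec_{T^*} \dots \prec_{T^*} u_t = w$ be the path from the nearest free ancestor $u_0$ of $w$ down to $w$ ($u_0$ exists since the root is free). By minimality of $u_0$ each of $u_1,\dots,u_t$ is captive, so each edge $u_{\ell-1}\to u_\ell$ is dashed, hence a cadet edge, of some rank $s_\ell \in [1,m]$; moreover each $u_\ell$ is a node and therefore \live, so the child of rank $s_\ell$ of $u_{\ell-1}$ is $u_\ell = \nextlive(u_\ell)$. Descending the path, I claim $p$ on $\text{label}(u_\ell)$ is determined by $p$ on $\text{label}(u_{\ell-1})$. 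If $s_\ell < m$, then for any $j' \in \text{label}(u_{\ell-1})$ and $i' \in \text{label}(u_\ell)$ we have $v^{s_\ell}_{T^*}(j') = u_\ell = \nextlive(v^{s_\ell}_{T^*}(j'))$ and a one-edge dashed path from $v_{T^*}(j')$ to $v_{T^*}(i')$, so \mshiconditionthree forces $p_{i'} = p_{j'}+s_\ell$. If $s_\ell = m$, condition \mshiconditionfive only constrains pairs in $\pairsilessthanj$; this is where the Shi-type hypothesis enters: the internal edge $u_{\ell-1}\to u_\ell$ has rank $m$, hence is a descent, so $\max(\text{label}(u_{\ell-1})) > \min(\text{label}(u_\ell))$, and taking $j' := \max(\text{label}(u_{\ell-1}))$, $i' := \min(\text{label}(u_\ell))$ gives $(i',j') \in \pairsilessthanj$ with $p_{i'} = p_{j'}+m$ by \mshiconditionfive. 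Together with the block structure from \mshiconditionone this pins down $p$ on $\text{label}(u_\ell)$ in either case, and iterating down the path determines $p$ on $\text{label}(w)$. Hence $p$ is determined by $p_{j_1},\dots,p_{j_k}$, so $\text{dim}(\canonicalmshiface)\leq k$, which completes the proof.

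I expect the only genuine obstacle to be the treatment of the rank-$m$ dashed edges: this is precisely the point at which being of Shi type is indispensable, since for a general \mtree a rank-$m$ dashed edge need not be a descent and then \mshiconditionfive provides no constraint — matching the fact that such an $m$-Catalan face can sit strictly inside a higher-dimensional $m$-Shi face. The remaining bookkeeping (existence of the nearest free ancestor, liveness of the nodes on the dashed path so that $\nextlive$ is trivial there, and the reduction to single dashed edges) is routine.
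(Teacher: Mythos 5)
Your proposal is correct, but it takes a genuinely different route from the paper. The paper's proof is purely combinatorial and works pairwise on the trees: setting $T_i := \mcatmapinv(\canonicalmcatface_i)$, it observes that all the $T_i$ share the same node-label partition (from the hyperplanes $x_a = x_b$), the same dashed edges of rank $s<m$ (from $x_a = x_b+s$), and the same dashed rank-$m$ \emph{descent} edges (from $x_a=x_b+m$ with $a<b$); since every dashed edge of the Shi-type tree $T_{i^*}$ falls into one of the latter two classes, each captive node of $T_{i^*}$ stays captive in every $T_i$, so $T_{i^*}$ has the most free nodes and, by the grading in \thmref{thm_mcat_bijection}, $\canonicalmcatface_{i^*}$ has the largest dimension; the equality with $\text{dim}(\canonicalmshiface)$ then comes from \eref{eq_dim_mshiface_equals_max}. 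You instead prove the upper bound $\text{dim}(\canonicalmshiface)\leq k$ geometrically, by showing every point of $\canonicalmshiface$ is affinely determined by one coordinate per free node of $T^*$, propagating the equalities forced by \mshiconditionone, \mshiconditionthree, and \mshiconditionfive down the dashed paths --- a direct adaptation of the dimension upper bound at the end of \subsecref{subsec_mcat_proof_existence}, and you correctly isolate the rank-$m$ edges as the one place where the Shi-type (descent) hypothesis is needed to get a pair in $\pairsilessthanj$ to which \mshiconditionfive applies. Both arguments are sound and rest only on material already established (the existence argument of \subsecref{subsec_mshi_proof_existence} and \remref{rem_mshi_face_union_mcat}), so there is no circularity; your version delivers the dimension grading of \thmref{thm_mshi_bijection} somewhat more directly, while the paper's version yields the extra structural information that the trees of the Catalan faces inside one Shi face differ only in which non-descent rank-$m$ edges are dashed, a fact in the spirit of what is used later in the surjectivity proof.
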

    \begin{proof}
      Let $T_{i^*} := \mcatmapinv(\canonicalmcatface_{i^*})$ and $T_i := \mcatmapinv(\canonicalmcatface_{i})$. It is easy to see by the properties in \thmref{thm_mcat_bijection} and the fact that $\canonicalmcatface_i$ and $\canonicalmcatface_{i^*}$ belong to the same $m$-Shi face that:
      \begin{enumerate}
        \item Two numbers $a,b \in [n]$ appear in the same node of $T_{i^*}$ if and only if $a,b$ appear in the same node of $T_{i}$.
        \item For $A,B \subset [n]$ and $s \in [1,m-1]$, in $T_{i^*}$ there is a dashed edge of rank $s$ from a node labeled $A$ to a node labeled $B$ if and only if in $T_{i}$ there is a dashed edge of rank $s$ from a node labeled $A$ to a node labeled $B$.
        \item For $A,B \subset [n]$, in $T_{i^*}$ there is a dashed edge of rank $m$ that is a descent from a node labeled $A$ to a node labeled $B$ if and only if in $T_{i}$ there is a dashed edge of rank $m$ that is a descent from a node labeled $A$ to a node labeled $B$.
      \end{enumerate}
      It follows from (1) that $T_{i^*}$ and $T_{i}$ have the same number of nodes. 
      Since $T_{i^*}$ is assumed to be of Shi type, all rank $m$ edges are descents. Therefore, from (2) and (3) we see that any captive node in $T_{i^*}$ must also be a captive node in $T_{i}$.
      Thus the number of captive nodes in $T_{i^*}$ is less than or equal to the number of captive nodes in $T_{i}$. 
      Since the total number of nodes is the same, we have that the number of free nodes of $T_{i^*}$ is greater than or equal to the number of free nodes of $T_{i}$. 
      Applying the dimension grading of \thmref{thm_mcat_bijection}, we obtain \eref{eq_prop_dim_mshi_representative}. 

      The last statement of the proposition follows immediately from \eref{eq_prop_dim_mshi_representative} and \eref{eq_dim_mshiface_equals_max}.
    \end{proof}
  \subsection{Proof that \texorpdfstring{$\mshimap$}{map for m-Shi faces} is injective} \label{subsec_mshi_proof_injective}
    We will prove that $\mshimap$ is injective by showing that the conditions on $T$ in \thmref{thm_mshi_bijection} are enough to define $T$. Suppose $T$ and $T'$ are \mtrees of Shi type and that the following five conditions hold.
    \begin{enumerate}
      \item[\mshitreeconditionone] For all $i,j \in [n]$, $v_T(i) \preceq_T v_T(j)$ if and only if $v_{T'}(i) \preceq_{T'} v_{T'}(j)$.
      \item[\mshitreeconditiontwo] For all $i,j \in [n]$, and $s \in [1,m-1]$, $v_T(i) \prec_T v_{T}^{s}(j)$ if and only if $v_{T'}(i) \prec_{T'} v_{T'}^{s}(j)$.
      \item[\mshitreeconditionfour] For all $(i,j) \in \pairsilessthanj, v_T(i) \prec_T v_{T}^{m}(j)$ if and only if $v_{T'}(i) \prec_{T'} v_{T'}^{m}(j)$.
      \item[\mshitreeconditionthree] For all $i,j \in [n]$ and $s \in [1,m-1]$, $v_T(i) = \nextlive(v^s_T(j))$ and there is a dashed path from $v_T(j)$ to $v_T(i)$ if and only if $v_{T'}(i) = \nextlive(v^s_{T'}(j))$ and there is a dashed path from $v_{T'}(j)$ to $v_{T'}(i)$.
      \item[\mshitreeconditionfive] For all $(i,j) \in \pairsilessthanj, v_T(i) = \nextlive(v^m_T(j))$ and there is a dashed path from $v_T(j)$ to $v_T(i)$ if and only if $v_{T'}(i) = \nextlive(v^m_{T'}(j))$ and there is a dashed path from $v_{T'}(j)$ to $v_{T'}(i)$.
    \end{enumerate}
    We will show $T=T'$. First, it is clear from \mshitreeconditionone that two numbers $a,b \in [n]$ appear in the same node of $T$ if and only if they appear in the same node of $T'$. This proves that $T$ and $T'$ have the same number of nodes, and since they are $(m+1)$-ary trees, they also have the same number of total vertices, $p$. 

    Let $v_1 \prec_T \dots \prec_T v_{p}$ be the vertices of $T$, and let $w_1 \prec_{T'} \dots \prec_{T'} w_{p}$ be the vertices of $T'$. As in \subsecref{subsec_mcat_proof_injective}, to show $T = T'$, it is enough to show:
    \begin{enumerate}[label=(\roman*)]
      \item $v_i \cong w_i$ for all $i \in [p]$, 
      \item $\rho(v_i) = \rho(w_i)$ for all $i \in [p]$, and
      \item $v_i$ is a captive node if and only if $w_i$ is a captive node. 
    \end{enumerate}
    We prove (i) by induction on $i$. The base case $i=1$ is covered by Case A below.

    \caseif{A}{$v_i$ is a node and $w_i$ is a node.}
      Since \mshitreeconditionone and \mcattreeconditionone are the same, this case is identical to the Case A in \subsecref{subsec_mcat_proof_injective}. Using the same argument verbatim, we obtain $v_i \cong w_i$.

    \caseif{B}{$v_i$ is a leaf and $w_i$ is a leaf.} 
      In this case, $v_i \cong w_i$ trivially.

    \caseif{C}{$v_i$ is a leaf and $w_i$ is a node.} 
      We will show that this case is not possible. The basic setup is the same as Case C in \subsecref{subsec_mcat_proof_injective}. Namely, we define the following:
      \begin{itemize}
        \item Let $b$ be any element of $\text{label}(w_i)$.
        \item Let $u := \nextnode(v_i)$. 
        \item Let $s$ be the rank of $u$. We know $s>0$.
        \item Let $v_h$ be the parent of $u$.
        \item Let $a$ be any element of $\text{label}(v_h)$.
      \end{itemize}
      Moreover, we have the following facts:
      \begin{align} \label{eq_mshi_v_i_sandwich} 
        v_T(a) = v_{h} \prec_{T} v_{i} \prec_{T} u = v_T^s(a) \preceq_{T} v_T(b),
      \end{align}
      and
      \begin{align} \label{eq_mshi_vtpb_prec_kthelement}
        v_{T'}(b) \prec_{T'} v_{T'}^{s}(a).
      \end{align}
      If $s \in [1,m-1]$ then the last relation of \eref{eq_mshi_v_i_sandwich} and \eref{eq_mshi_vtpb_prec_kthelement} contradict \mshitreeconditionthree, and we are done. From now on we assume $s=m$. 
      Unlike in the Catalan case, or the $s<m$ case, the equations \eref{eq_mshi_vtpb_prec_kthelement} and the last relation of \eref{eq_mshi_v_i_sandwich} do not directly create a contradiction, since we would require $b<a$ to apply \mshitreeconditionfour. 
      In order to create the contradiction, we will now find $b^*$ in $\text{label}(v_T(b))$ and $a^*$ in $\text{label}(v_T(a))$ such that $b^* < a^*$, which then leads to a contradiction with \mshitreeconditionfour. 
      The key fact is the following: 
      \begin{align} \label{eq_mshi_u_eq_vtb} 
        u = v_T(b).
      \end{align}
      To prove \eref{eq_mshi_u_eq_vtb} it is enough to show $v_T(b) \preceq_{T} u$, since the last relation of \eref{eq_mshi_v_i_sandwich} already gives $u \preceq_{T} v_T(b)$. Let $z$ be any element of $u$. By the definition of $u$, $z$ does not appear in the labels of any of $v_1,\dots,v_i$. By the inductive hypothesis, this means $z$ does not appear in the labels of any of $w_1,\dots,w_{i-1}$. Since $z$ has to appear somewhere in $T'$, we have shown 
      \begin{align} \label{eq_mshi_wi_preceqtp_vtpz}
        w_i \preceq_{T'} v_{T'}(z).
      \end{align}
      Since $w_i = v_{T'}(b)$, \eref{eq_mshi_wi_preceqtp_vtpz} is equivalent to 
      \begin{align} \label{eq_mshi_vtpb_preceqtp_vtpz}
        v_{T'}(b) \preceq_{T'} v_{T'}(z).
      \end{align}
      By \mshitreeconditionone, \eref{eq_mshi_vtpb_preceqtp_vtpz} implies
      \begin{align} \label{eq_mshi_vtb_preceqt_vtz}
        v_{T}(b) \preceq_{T} v_{T}(z).
      \end{align}
      and since $v_{T}(z) = u$, we have shown $v_T(b) \preceq_{T} u$. Before we showed $u \preceq_{T} v_T(b)$, so we have proven \eref{eq_mshi_u_eq_vtb}. 

      The importance of \eref{eq_mshi_u_eq_vtb} is that it proves $v_T(b)$ is the child of rank $m$ of $v_T(a)$, and therefore the edge connecting them is a descent. Let $a^* := \max \left(\text{label}(v_T(a))\right)$ and $b^* := \min \left(\text{label}(v_T(b)) \right)$. Since $T$ is of Shi type, we have $b^* < a^*$. We have $v_T(a) = v_T(a^*)$ and $v_T(b) = v_T(b^*)$, so by \mshitreeconditionone we also have $v_{T'}(a) = v_{T'}(a^*)$ and $v_{T'}(b) = v_{T'}(b^*)$. Since $u = v_T(b)$, we can rewrite the penultimate relation of \eref{eq_mshi_v_i_sandwich} as
      \begin{align} \label{eq_mshi_vtbstar_eq_kthelement}
          v_{T}^{m}(a^*) =  v_{T}(b^*).
      \end{align}
      We can also rewrite \eref{eq_mshi_vtpb_prec_kthelement} as
      \begin{align} \label{eq_mshi_vtpbstar_prec_kthelement}
        v_{T'}(b^*) \prec_{T'} v_{T'}^{m}(a^*).
      \end{align}
      Finally we have a clear contradiction between \eref{eq_mshi_vtbstar_eq_kthelement}, \eref{eq_mshi_vtpbstar_prec_kthelement}, and \mshitreeconditionfour.

      \caseif{D}{$v_i$ is a node and $w_i$ is a leaf.} This is the same as Case C by symmetry.

      This completes our proof of (i), that $v_i \cong w_i$ for all $i \in [p]$. \lemref{lem_mcat_prect_inductive} shows that (i) implies (ii). 

      It remains to show (iii), which is straightforward: suppose $v_i$ is a captive node of $T$, and let $s$ be the rank of $v_i$. By \defref{def_mtrees}, $s \in [1,m]$. If $s \in [1,m-1]$, then since $v_i \cong w_i$, \mshitreeconditionthree implies that $w_i$ must be a captive node in $T'$, as desired. 

      If $s=m$, then since $T$ is of Shi type, the edge connecting $v_i$ to its parent is a descent. 
      Therefore there exist $i \in \text{label}(v_i)$ and $j \in \text{label}(\text{parent}(v_i))$ such that $(i,j) \in \pairsilessthanj$.
      By (i) and (ii), $i \in \text{label}(w_i)$ and $j \in \text{label}(\text{parent}(w_i))$, so \mshitreeconditionfive shows $w_i$ is captive, as desired.
      Thus we obtain $T = T'$.

  \subsection{Proof that \texorpdfstring{$\mshimap$}{map for m-Shi faces} is surjective} \label{subsec_mshi_proof_surjective}
    Let $\canonicalmshiface$ be an arbitrary $m$-Shi face. We will show that there exists a tree $T$ of Shi type such that $\mshimap(T) = \canonicalmshiface$.

    By \remref{rem_mshi_face_union_mcat} there exist $m$-Catalan faces $\canonicalmcatface_1, \dots, \canonicalmcatface_{\ell}$ such that $\canonicalmshiface$ is the disjoint union of the $\canonicalmcatface_{i}$. Recall that each $m$-Catalan face is defined by choices of inequality or equality for each hyperplane of the $m$-Catalan arrangement. For any $m$-Catalan face $\canonicalmcatface$, let $\shirank(\canonicalmcatface)$ be the number of inequalities in the definition of $\canonicalmcatface$ of the form $x_i +m > x_j$ for $(i,j) \in \pairsilessthanj$. Among the $\canonicalmcatface_{i}$, choose one $\canonicalmcatface^{*}$ such that $\shirank(\canonicalmcatface^{*})$ is maximal. 

    Let $T$ be $\mcatmapinv(\canonicalmcatface^{*})$. 
    We will show that $T$ is of Shi type. 
    Note that this implies $\mshimap(T) = \canonicalmshiface$, which proves that $\mshimap$ is surjective. 

    Suppose for contradiction that $T$ is not of Shi type, that is, there exists an edge of rank $m$ with parent node $v$ labeled $A$ and child node $w$ labeled $B$ such that $a < b$ for all $a \in A$ and $b \in B$. 
    It follows from \lemref{lem_next_of_dead_leaf}, \precconditiontwo, and the fact that $w$ has rank $m$ that the vertex immediately preceding $w$ in the $\prec_T$ order must be a \live leaf, $\ell$ (see \defref{def_dead_leaves}). 
    We construct a new tree $T'$ in the following manner. Let the sub-trees that are children of $w$ be $T_0,\dots,T_m$. Replace the child of rank $m$ of $v$ with $T_0$. Then replace $\ell$ by a node with label $B$, with a leaf for its child of rank $0$, and with the sub-trees $T_1,\dots,T_m$ for its other children. If there was a dashed edge from $w$ to $T_i$ for $i \in [m]$, then the edge from the new node labeled $B$ to $T_i$ is also dashed.
    Since $\ell$ is a \live leaf, we know that the resulting tree $T'$ satisfies the definition of \mtree.

    Therefore, $T'$ corresponds to another $m$-Catalan face $\canonicalmcatface' := \mcatmapinv(T')$, and we claim that $\canonicalmshiface$ contains this new $\canonicalmcatface'$ and that $\shirank(\canonicalmcatface') > \shirank(\canonicalmcatface^{*})$. 
    It is not too hard to show from \thmref{thm_mcat_bijection} that the only change between $\canonicalmcatface^{*}$ and $\canonicalmcatface'$ is that the inequalities $x_{a}+m \leq x_b$ for all $a \in A$ and $b \in B$ have flipped to $x_{a}+m>x_{b}$. 
    This follows from the fact that the $\prec_T$ order has not changed except for the fact that the node $B$ in $T'$ now precedes the child of rank $m$ of the node labeled $A$ in $T'$. 

    Furthermore, for all $a \in A$ and $b \in B$ we have $a<b$, so the modified inequalities are not hyperplanes in the $m$-Shi arrangement, and therefore $\canonicalmshiface$ contains both $\canonicalmcatface^{*}$ and $\canonicalmcatface'$. Since the inequalities $x_{a}+m>x_{b}$ count towards $\shirank(\canonicalmcatface')$, we have $\shirank(\canonicalmcatface') > \shirank(\canonicalmcatface^{*})$. This contradicts the choice of $\canonicalmcatface^{*}$, and therefore we have shown $T$ must be of Shi type. 

    Recall that $\mshimap(T)$ is the unique Shi face that contains $\mcatmap(T)$. Since we have shown that $\canonicalmshiface$ contains $\mcatmap(T)$, it follows that $\mshimap(T) = \canonicalmshiface$ as desired. This concludes the proof that $\mshimap$ is surjective. We have now proved the entirety of \thmref{thm_mshi_bijection}.

\section{Bijections from \texorpdfstring{$m$-Shi}{m-Shi} faces to marked functions} \label{sec_additional_bijections}
  The original counting formula of Athanasiadis suggests that Shi faces correspond bijectively to the set of marked functions defined in \eref{eq_athanasiadis_shi_faces}. In this section, we explain how to obtain these marked functions from our decorated trees. 
  \subsection{Result for the Shi arrangement} \label{subsec_shi_everything_results}
    Before considering a general $m$, we specialize to the case $m=1$.
    A \emphnew{Cayley tree} is a labeled graph that is connected and acyclic. An \emphnew{internal node} of a Cayley tree is a vertex of degree at least two. Let
    \begin{align}
      \emphnewmath{\mathscr{T}_{n,k}} := \bracketbox{8cm}{$[n]$-decorated binary trees such that all right internal edges are descents, and with $k$ free nodes.}, \\
      \emphnewmath{\mathscr{U}_{n,k}} := \bracketbox{8cm}{Binary trees with $n$ nodes, labeled by $[n]$, such that all right internal edges are descents, with $n-k$ marked nodes, each of which has non-leaf left-children.}, \\
      \emphnewmath{\mathscr{V}_{n,k}} := \bracketbox{8cm}{Cayley trees with $n+1$ vertices, with $n-k$ marked internal node among those labeled $\{1,\dots,n\}$.}, \\
      \emphnewmath{\mathscr{W}_{n,k}} := \bracketbox{8cm}{Pairs $(f,S)$ where $f : [n-1] \to [n+1]$ and $S \subset \text{Im}(F) \cap [n]$ with $|S|=n-k$.}.
    \end{align}
    \begin{theorem} \label{thm_shi_everything}
      For any $n \geq 1$ and $k \in [n]$, the four sets $\mathscr{T}_{n,k}, \mathscr{U}_{n,k}, \mathscr{V}_{n,k}, \mathscr{W}_{n,k}$ are all in bijection with one another. Consequently,
      $$\# \bracketbox{2.5cm}{Shi faces in $\mathbb{R}^n$ of dimension $k$} = |\mathscr{T}_{n,k}| = 
      |\mathscr{U}_{n,k}| = 
      |\mathscr{V}_{n,k}| = 
      |\mathscr{W}_{n,k}|.$$
    \end{theorem}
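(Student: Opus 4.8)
The plan is to establish the face count separately and then realise the four-way correspondence as a chain of explicit bijections $\mathscr{T}_{n,k} \to \mathscr{U}_{n,k} \to \mathscr{V}_{n,k} \to \mathscr{W}_{n,k}$. The equality $\#\{\text{Shi faces in }\mathbb{R}^n\text{ of dimension }k\} = |\mathscr{T}_{n,k}|$ is nothing but \thmref{thm_shi_bijection}: the set of $[n]$-decorated binary trees of Shi type with $k$ free nodes is exactly $\mathscr{T}_{n,k}$, and $\mshimap$ is a dimension-preserving bijection onto the Shi faces. So all of the content is in the three remaining bijections.

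For $\mathscr{T}_{n,k}\to\mathscr{U}_{n,k}$ the idea is to \emph{atomize} the node labels. Replace a node labeled $\{a_1<\dots<a_t\}$ by a left-path of $t$ singleton-labeled nodes reading $a_1,a_2,\dots,a_t$ from top to bottom, re-attaching the original node's two subtrees at the ends of this path, and converting each dashed edge of $T$ into a left edge that grafts the atomized captive subtree below the bottom of its parent's path. Mark every atomized node that is not the bottom of its own path, together with one additional node per captive node of $T$. A short count gives exactly $n-k$ marked nodes, using that $T$ has $k$ free nodes and that $\sum_{\text{blocks}}(|\text{block}|-1)=n-\#\{\text{nodes of }T\}$, so that the marked total is $\bigl(n-\#\{\text{nodes}\}\bigr)+\bigl(\#\{\text{nodes}\}-k\bigr)=n-k$. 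One then checks: (i) the result is a binary tree with $n$ nodes labeled by $[n]$; (ii) its internal right edges are precisely the images of the internal right edges of $T$, so the Shi-type (descent) condition transfers verbatim; (iii) every marked node has a non-leaf left-child, which is exactly where the orientation of the atomized paths and the routing of the dashed edges have to be chosen with care (one uses that a node of $T$ has at most one dashed child and that dashed edges are right edges, \defref{def_binary_trees}). Invertibility comes from re-reading each maximal left-run of an $\mathscr{U}_{n,k}$-tree together with the marked/unmarked data to recover the blocks and the captive structure of $T$.

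For $\mathscr{U}_{n,k}\to\mathscr{V}_{n,k}$ I would use a labeled version of the classical rotation correspondence between binary trees and Cayley trees: a binary tree with $n$ nodes has $n+1$ leaf slots, and one builds from it a tree on $n+1$ vertices labeled by $[n+1]$ (with a distinguished vertex playing the role of ``$n{+}1$''), whose edges record the left/right adjacencies. The encoding must be chosen so that (a) the constraint ``every internal right edge is a descent'' on the $\mathscr{U}$-side corresponds to \emph{no} constraint on the $\mathscr{V}$-side — i.e.\ the constrained labeled binary trees with $n$ nodes are equinumerous with all Cayley trees on $n+1$ vertices — and (b) a node of the $\mathscr{U}$-tree has a non-leaf left-child if and only if the corresponding Cayley vertex is internal and lies in $\{1,\dots,n\}$, so the $(n-k)$-element marked sets correspond under transport. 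Finally, $\mathscr{V}_{n,k}\to\mathscr{W}_{n,k}$ is the Prüfer correspondence: a Cayley tree on the vertices $\{1,\dots,n+1\}$ has a Prüfer word of length $n-1$, i.e.\ a function $f:[n-1]\to[n+1]$ whose image is exactly the set of internal vertices; hence a choice of $n-k$ marked internal vertices among $\{1,\dots,n\}$ is precisely a subset $S\subseteq\image(f)\cap[n]$ with $|S|=n-k$. Composing the three bijections and invoking \thmref{thm_shi_bijection} yields the displayed chain of equalities.

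The main obstacle is the first step, $\mathscr{T}_{n,k}\to\mathscr{U}_{n,k}$: atomization is easy to state in slogans, but the bookkeeping — orienting each atomized path, deciding exactly which atomized nodes to mark, routing the left subtree versus the dashed right subtree of a node so that every marked node acquires a non-leaf left-child, and verifying that nothing is over- or under-marked and that the descent conditions line up — is delicate, and the safest route is to pin down the inverse (reconstruct $T$ from an element of $\mathscr{U}_{n,k}$ by collapsing maximal left-runs modulo the marking) and prove the two maps compose to the identity in both directions. I would sanity-check everything against $k=n$, where atomization, the binary$\leftrightarrow$Cayley step, and Prüfer all degenerate to the classical $(n+1)^{n-1}$ count for regions, and against $k=1$, where $\mathscr{T}_{n,1},\mathscr{U}_{n,1},\mathscr{V}_{n,1},\mathscr{W}_{n,1}$ each have size $n!$ (cf.\ \remref{rem_shi_onedim}).
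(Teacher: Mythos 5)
Your proposal follows the paper's proof in architecture: the same chain $\mathscr{T}_{n,k}\isoto\mathscr{U}_{n,k}\isoto\mathscr{V}_{n,k}\isoto\mathscr{W}_{n,k}$, with atomization of block labels into left-paths of marked singletons, the rotation (``natural'') correspondence exploiting that decreasing right-paths carry no ordering information, and the Pr\"ufer code with $S$ read off the marked internal vertices; your count $(n-\#\{\text{nodes}\})+(\#\{\text{nodes}\}-k)=n-k$ of marked nodes is also exactly the paper's. The one place you diverge --- and where your description, taken literally, breaks --- is the routing of dashed edges in $\mathscr{T}_{n,k}\to\mathscr{U}_{n,k}$: you propose to convert \emph{every} dashed (right) edge into a left edge grafting the captive subtree below the bottom of the parent's atomized path, but that left slot is already occupied by the parent's original left subtree whenever that subtree is not a leaf. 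The paper's resolution is conditional: always undash the right edge and mark the parent, and swap the parent's two children \emph{only when} its left child is a leaf, so the captive subtree moves to the left exactly when needed to give the marked parent a non-leaf left child. A side effect is that your claim (ii) --- that the internal right edges of $U$ are \emph{precisely} the images of those of $T$ --- should be weakened to containment (the swapped dashed edges become left edges), which still suffices for the descent condition to transfer. With that fix, your inverse-by-collapsing-maximal-left-runs strategy and the remaining two bijections go through as you outline.
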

    The first equals sign in the above theorem was obtained in \thmref{thm_shi_bijection}. We prove \thmref{thm_shi_everything} by describing a sequence of bijections $$\mathscr{T}_{n,k} \isoto \mathscr{U}_{n,k} \isoto \mathscr{V}_{n,k} \isoto \mathscr{W}_{n,k}.$$ One example of the chain of correspondences is shown in \figref{fig_shieverything}. 
    The descriptions of each map will be given for a general $m$ in the following subsections. 
    We briefly describe the case $m=1$ here, omitting some of the details. 
    \begin{figure}[t]
      \includegraphicswide{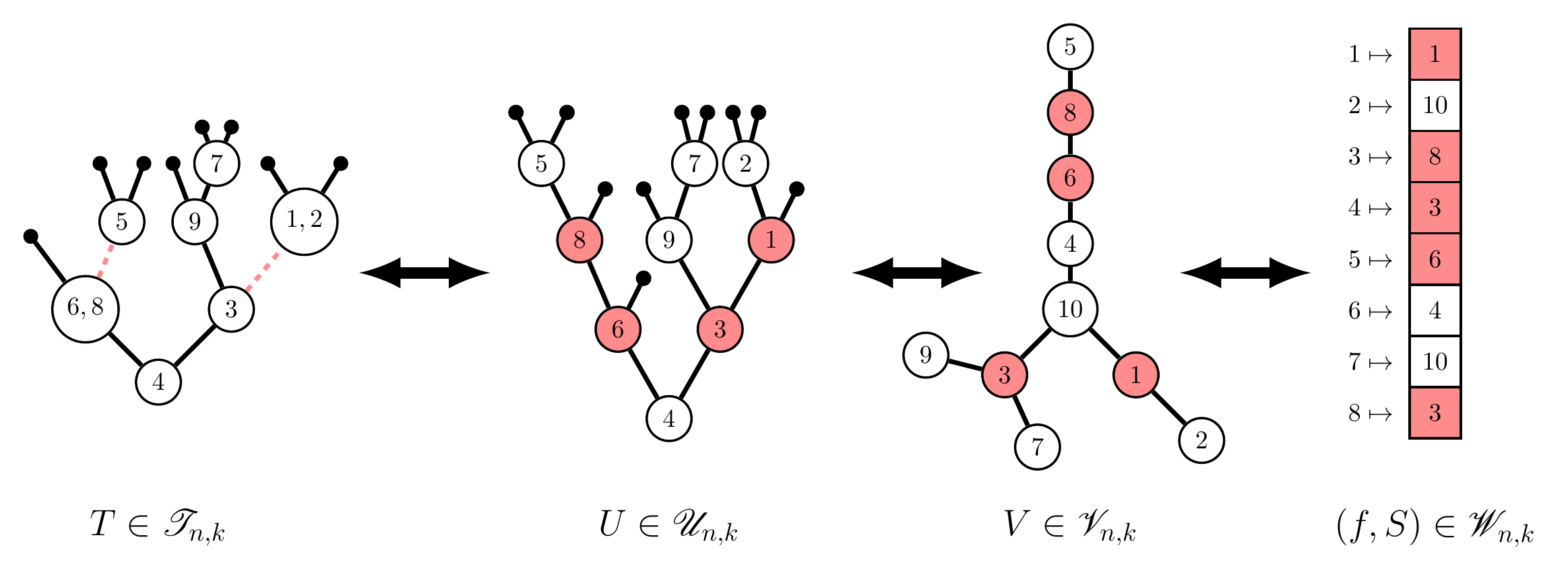}
        \caption[Example of the bijections in \thmref{thm_shi_everything}]{An example of the bijections in \thmref{thm_shi_everything}. In this example, $m=1$, $n=9$, and $k=5$. The element from $\mathscr{W}_{n,k}$ has $S = \{1,3,6,8\}$.}
        \label{fig_shieverything}
    \end{figure}
    The trickiest bijection is the first, $\mathscr{T}_{n,k} \isoto \mathscr{U}_{n,k}$. 
    For $T \in \mathscr{T}_{n,k}$, we apply two local operations (Step 1 and Step 2) to $T$ to obtain its corresponding $U \in \mathscr{U}_{n,k}$. 
    Both operations are summarized in \figref{fig_shi_bijections_one_steps}. 
    \begin{figure}[t]
      \includegraphicswide{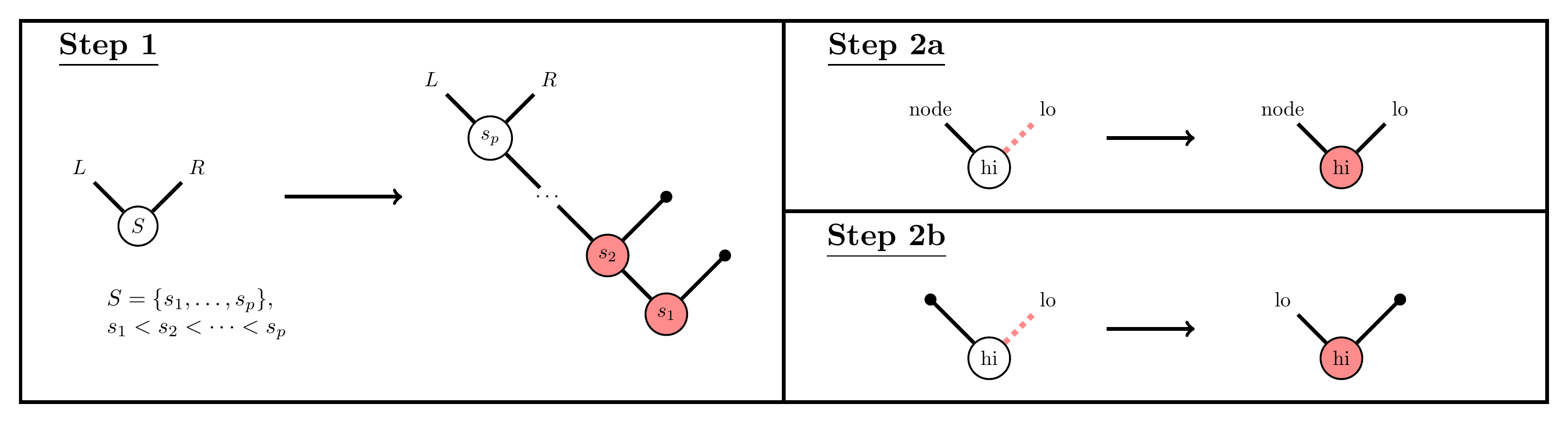}
        \caption[The bijection $\mathscr{T}_{n,k} \isoto \mathscr{U}_{n,k}$]{The two steps of the bijection $\mathscr{T}_{n,k} \isoto \mathscr{U}_{n,k}$. The notation of ``hi'' and ``lo'' should be understood to mean the edge connecting them is a descent, that is, the vertex labeled ``hi'' contains a number that is greater than the number in the vertex labeled ``lo''.}
        \label{fig_shi_bijections_one_steps}
    \end{figure}
    For Step 1, let $v$ be a node of $T$, with left-child $L$, right-child $R$, and with label $S = \{s_1,s_2,\dots,s_p\}$ with $s_1 < s_2 < \dots < s_p$. If $|S|>1$, then replace $v$ by an increasing left-path of singleton marked nodes labeled by $s_1,s_2,\dots,s_p$, each with right-leaves, except for the last singleton which is left unmarked and keeps the original left and right children of $v$. If the original edge connecting $v$ to $R$ was dashed, then the new edge from $s_p$ to $R$ remains dashed. This completes Step 1. Let $T'$ be the intermediary tree obtained by applying Step 1. 

    For Step 2, we apply a local operation to each node of $T'$ with a dashed right-edge. Let $v$ be such a node. Since dashed edges are internal, $v$ has a node as its right-child, and the corresponding right-edge must be a descent. First convert the right-edge of $v$ to a solid edge, and add $v$ to the set of marked nodes. Now there are two cases: if the left-child of $v$ is a node, we are done with $v$ (case (a)). If the left-child of $v$ is a leaf, then swap the two children of $v$ (case (b)). 

    \begin{figure}[t]
      \includegraphicswide{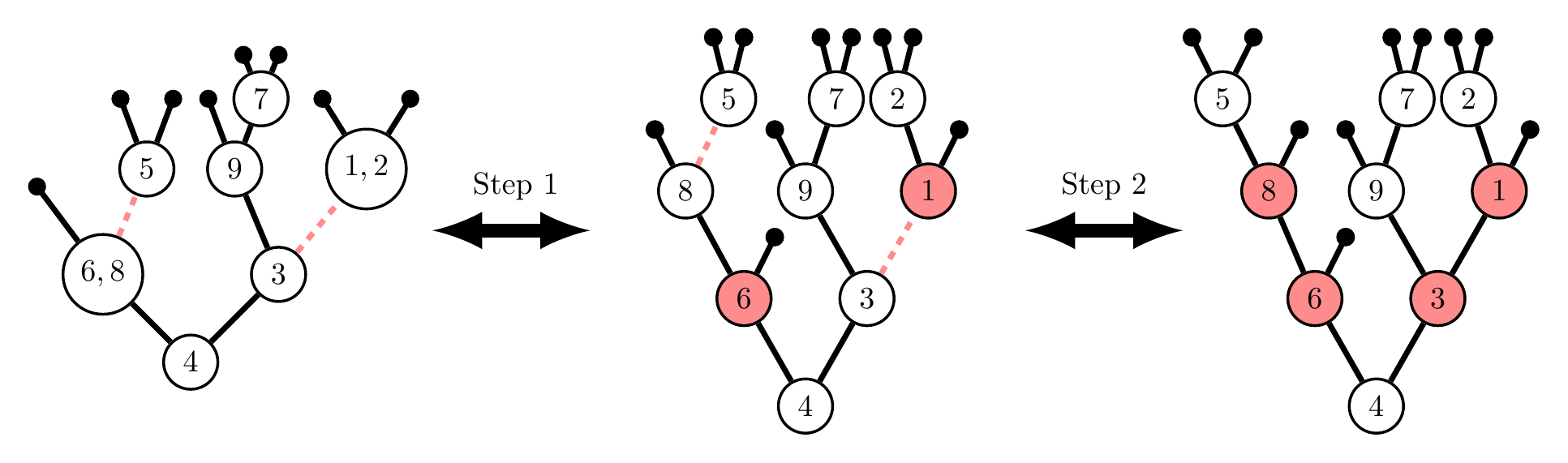}
        \caption[Example of the bijection $\mathscr{T}_{n,k} \isoto \mathscr{U}_{n,k}$]{A full example of steps 1 and 2 of the bijection $\mathscr{T}_{n,k} \isoto \mathscr{U}_{n,k}$.}
        \label{fig_shi_bijections_one_step_one}
    \end{figure}
    It is not too hard to see that the resulting map obtained by applying Steps 1 and 2 is a bijection, so we have established $\mathscr{T}_{n,k} \isoto \mathscr{U}_{n,k}$.

    The second bijection $\mathscr{U}_{n,k} \isoto \mathscr{V}_{n,k}$ is given by a standard bijection from binary trees to plane trees, which has been called the ``natural correspondence'' in \cite[\S2.3.2]{cite_knuth1997theart} (see also \cite[Thm. 1.5.1, (iii) to (ii)]{cite_stanley2015catalannumbers}). In our case, since right-paths are decreasing, we can forget the order of the children, obtaining Cayley trees for $\mathscr{V}_{n,k}$ instead of plane trees.

    The map can be described in three steps, illustrated by example in \figref{fig_shi_bijections_two}.
    For Step 1, delete all leaves. 
    For Step 2, add edges connecting each vertex to all of the vertices in the right path starting from its left-child, and add a node labeled $n+1$, connected to the vertices of the root's right-path (including the root).
    For Step 3, delete all of the original right-edges. 
    This concludes the bijection $\mathscr{U}_{n,k} \isoto \mathscr{V}_{n,k}$.
    \begin{figure}[t]
      \includegraphicswide{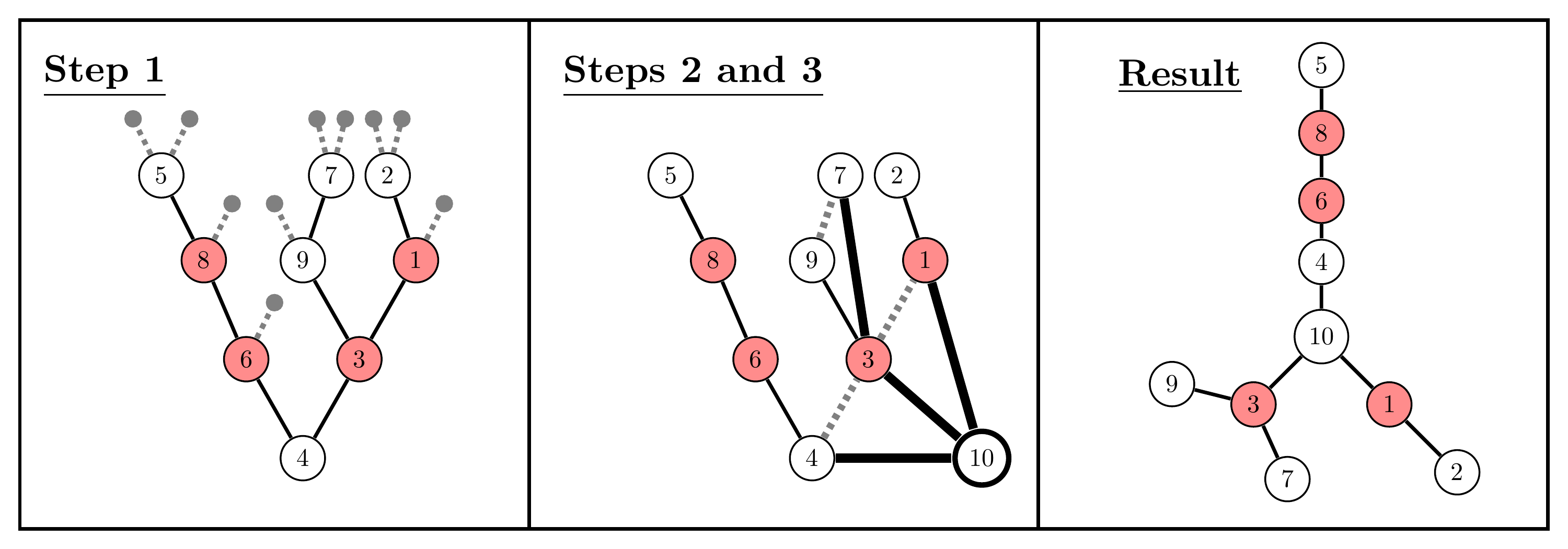}
        \caption[The bijection $\mathscr{U}_{n,k} \isoto \mathscr{V}_{n,k}$]{The three steps of the bijection $\mathscr{U}_{n,k} \isoto \mathscr{V}_{n,k}$. The dashed gray edges are deleted, the bold edges are added. }
        \label{fig_shi_bijections_two}
    \end{figure}

    The third bijection $\mathscr{U}_{n,k} \isoto \mathscr{W}_{n,k}$ is a decorated version of the standard Pr\"{u}fer code bijection. 
    A \emphnew{leaf} of a Cayley tree is a vertex of degree one.  
    Given $V \in \mathscr{V}_{n,k}$, we define the pair $(f,S) \in \mathscr{W}^{(m)}_{n,k}$ as follows. 
    Define $f(1)$ to be the label of the parent of the leaf vertex of $V$ of minimum label. 
    Then delete this leaf, and use the same rule to define $f(2)$, then $f(3)$, and so on to $f(n-1)$ (deleting leaves at each stage). 
    The resulting function $f$ is known as the \emphnew{Pr\"ufer sequence} of the tree $V$, and it can be shown that this encoding is a bijection from Cayley trees to functions $f : [n-1] \to [n+1]$ (see \cite{cite_prufer1918neuerbeweis} or \cite[Prop. 5.3.2]{cite_stanley1999enumerativecombinatorics}). 
    For the subset $S$ we take the set of labels of the marked internal nodes of $V$. 
    It is not hard to show that $\text{Im}(f)$ consists of all internal nodes of $V$, whence $S \subset \text{Im}(f)$. 
    The inverse of this map is given by inverting the Pr\"ufer sequence, and then marking the vertices recorded in $S$. 
    Thus we have the final bijection $\mathscr{V}_{n,k} \isoto \mathscr{W}_{n,k}$.
  \subsection{Result for the \texorpdfstring{$m$-Shi}{m-Shi} arrangement} \label{subsec_mshi_everything_results}
    We now state our results for a general $m$, and give detailed descriptions of the bijections.
    A \emphnew{Cayley $m$-foliage} is a Cayley tree (connected, acyclic, labeled graph) where all edges are assigned one of $m$ colors, represented by the numbers $[0,m-1]$, except for the edges incident to the vertex with maximum label always have color $0$. An \emphnew{internal node} of a Cayley $m$-foliage is a vertex of degree at least two. 
    Let 
    \begin{align}
      \emphnewmath{\mathscr{T}^{(m)}_{n,k}} := \bracketbox{8cm}{$[n]$-decorated $(m+1)$-ary trees such that all internal edges of rank $m$ are descents, and with $k$ free nodes.}, \\
      \emphnewmath{\mathscr{U}^{(m)}_{n,k}} := \bracketbox{8cm}{$(m+1)$-ary trees with $n$ nodes, labeled by $[n]$, such that all internal edges of rank $m$ are descents, with $n-k$ marked nodes, each of which has at least one node among its children of rank $\lt m$.}, \\
      \emphnewmath{\mathscr{V}^{(m)}_{n,k}} := \bracketbox{8cm}{Cayley $m$-foliages with $n+1$ total vertices, with $n-k$ marked internal nodes.}, \\
      \emphnewmath{\mathscr{W}^{(m)}_{n,k}} := \bracketbox{8cm}{Pairs $(f,S)$ where $f : [n-1] \to [mn+1]$ and $S \subset [n]$, with $|S|=n-k$ such that $\text{Im}(f) \cap [(i-1)m+1,im] \neq \varnothing$ for all $i \in S$.}.
    \end{align}
    \begin{theorem} \label{thm_mshi_everything}
      For any $n \geq 1$ and $k \in [n]$, the four sets $\mathscr{T}^{(m)}_{n,k}, \mathscr{U}^{(m)}_{n,k}, \mathscr{V}^{(m)}_{n,k}, \mathscr{W}^{(m)}_{n,k}$ are all in bijection with one another. Consequently,
      \begin{align}
        \# \bracketbox{2.5cm}{$m$-Shi faces in $\mathbb{R}^n$ of dimension $k$} = |\mathscr{T}^{(m)}_{n,k}| = 
      |\mathscr{U}^{(m)}_{n,k}| = 
      |\mathscr{V}^{(m)}_{n,k}| = 
      |\mathscr{W}^{(m)}_{n,k}|.
      \end{align}
    \end{theorem}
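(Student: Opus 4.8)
The plan is to build the chain of bijections
$$\mathscr{T}^{(m)}_{n,k}\isoto\mathscr{U}^{(m)}_{n,k}\isoto\mathscr{V}^{(m)}_{n,k}\isoto\mathscr{W}^{(m)}_{n,k},$$
each a direct generalization of the $m=1$ maps sketched in \subsecref{subsec_shi_everything_results}, and then to read off the displayed cardinality equalities. The first of these, $\#\{m\text{-Shi faces in }\mathbb{R}^n\text{ of dimension }k\}=|\mathscr{T}^{(m)}_{n,k}|$, is immediate from \thmref{thm_mshi_bijection}, since an \mtree of Shi type with $k$ free nodes is by definition exactly an element of $\mathscr{T}^{(m)}_{n,k}$. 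All the substance is in the three maps, and in exhibiting their inverses.

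For $\mathscr{T}^{(m)}_{n,k}\isoto\mathscr{U}^{(m)}_{n,k}$ I would use two local operations. In Step~1, replace each node with label $S=\{s_1<\dots<s_p\}$ by a rank-$0$ path of singletons $s_1,\dots,s_p$ in which every child of each $s_i$ with $i<p$ other than the rank-$0$ child is a leaf, while $s_p$ inherits all the child-edges of the original node together with their solid/dashed data; then mark $s_1,\dots,s_{p-1}$. In Step~2, for each node $v$ whose cadet edge, say of rank $s$, is dashed, make that edge solid and mark $v$; and if $s=m$ while every child of $v$ of rank $<m$ is a leaf, additionally exchange the rank-$0$ child of $v$ with its former cadet. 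One checks the output lies in $\mathscr{U}^{(m)}_{n,k}$: rank-$m$ internal edges remain descents (when $s=m$ and a swap occurs, the offending descent becomes a rank-$0$ edge, and it was a descent by Shi type), every marked node acquires a node among its children of rank $<m$, and the number of marks is $\big(n-\#\{\text{nodes of }T\}\big)+\#\{\text{captive nodes of }T\}=n-k$. The inverse recognizes the Step~1 marks as exactly the marked nodes whose rank-$0$ child is a node of strictly larger label and all of whose other children are leaves, merges each maximal rank-$0$ chain of these into one multi-element label, then for each remaining (Step~2) marked node un-does the swap when its only node-child has rank $0$ and re-dashes its cadet edge. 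Verifying that these two kinds of marks are unambiguously distinguishable and that \precconditiontwo and \precconditionthree pin down a unique pre-image is where I expect the real bookkeeping to lie; this is the main obstacle of the theorem.

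For $\mathscr{U}^{(m)}_{n,k}\isoto\mathscr{V}^{(m)}_{n,k}$ I would use the $(m+1)$-ary version of the rotation correspondence: delete all leaves; then for every surviving node $v$ and every rank $r\in[0,m-1]$, join $v$ by an edge of colour $r$ to each vertex on the path of rank-$m$ children that starts at the rank-$r$ child of $v$; add one further vertex labelled $n+1$, joined by colour-$0$ edges to the rank-$m$ path starting at the root; and finally delete the original rank-$m$ edges. Because every rank-$m$ internal edge of a tree in $\mathscr{U}^{(m)}_{n,k}$ is a descent, each such path is decreasing, so discarding the planar order is harmless and the result is a Cayley $m$-foliage on $n+1$ vertices with the edges at $n+1$ coloured $0$, as required. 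Carrying the marks along, a vertex is an internal node of the foliage precisely when the corresponding node of $U$ has a node-child of rank $<m$, which is exactly the defining property of a marked node of $\mathscr{U}^{(m)}_{n,k}$; hence the $n-k$ marks land on internal nodes. The inverse is the usual ``un-rotation'', recovering the rank-$r$ children and their rank-$m$ descendant chains from the colour classes at each vertex.

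For $\mathscr{V}^{(m)}_{n,k}\isoto\mathscr{W}^{(m)}_{n,k}$ I would use a colour-decorated Prüfer code: deleting minimum-label leaves one at a time, record for the $t$-th deletion the pair consisting of the label of its parent and the colour of the deleted edge. A non-maximal parent admits $m$ colours and the vertex $n+1$ forces colour $0$, so such pairs are naturally indexed by $[mn+1]$, with a parent $i\in[n]$ corresponding to the block $[(i-1)m+1,im]$ and the parent $n+1$ to the symbol $mn+1$; the first $n-1$ records then form a function $f:[n-1]\to[mn+1]$, and the standard Prüfer argument applied to the foliage shows $V\mapsto f$ is a bijection onto all such functions. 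Since $\text{Im}(f)\cap[(i-1)m+1,im]\neq\varnothing$ if and only if $i$ occurs as a parent, i.e.\ is an internal node of the foliage, taking $S$ to be the set of labels of the marked internal nodes yields a pair $(f,S)\in\mathscr{W}^{(m)}_{n,k}$; conversely the image condition guarantees every $i\in S$ is internal, so $V$ is recovered by inverting the Prüfer code and marking the vertices listed in $S$. Composing the three bijections proves the theorem, and the displayed equalities of cardinalities follow.
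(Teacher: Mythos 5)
Your proposal is correct and follows essentially the same route as the paper: the chain $\mathscr{T}^{(m)}_{n,k}\isoto\mathscr{U}^{(m)}_{n,k}\isoto\mathscr{V}^{(m)}_{n,k}\isoto\mathscr{W}^{(m)}_{n,k}$ via the same three constructions (the split-and-mark/undash local surgery, the $(m+1)$-ary rotation to Cayley $m$-foliages, and the colour-decorated Pr\"ufer code), with the first equality supplied by \thmref{thm_mshi_bijection}. Your mark count $n-\#\{\text{nodes}\}+\#\{\text{captive nodes}\}=n-k$ and the ascent/descent criterion for disambiguating the two kinds of marks in the inverse are exactly the checks the paper relies on.
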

    The first equals sign in the above theorem is obtained from \thmref{thm_mshi_bijection}. We prove \thmref{thm_mshi_everything} by describing the sequence of bijections $$\mathscr{T}^{(m)}_{n,k} \isoto \mathscr{U}^{(m)}_{n,k} \isoto \mathscr{V}^{(m)}_{n,k} \isoto \mathscr{W}^{(m)}_{n,k}.$$ 
    One example of the chain of correspondences is shown in \figref{fig_mshieverything}. 
    \begin{figure}[t]
      \includegraphicswide{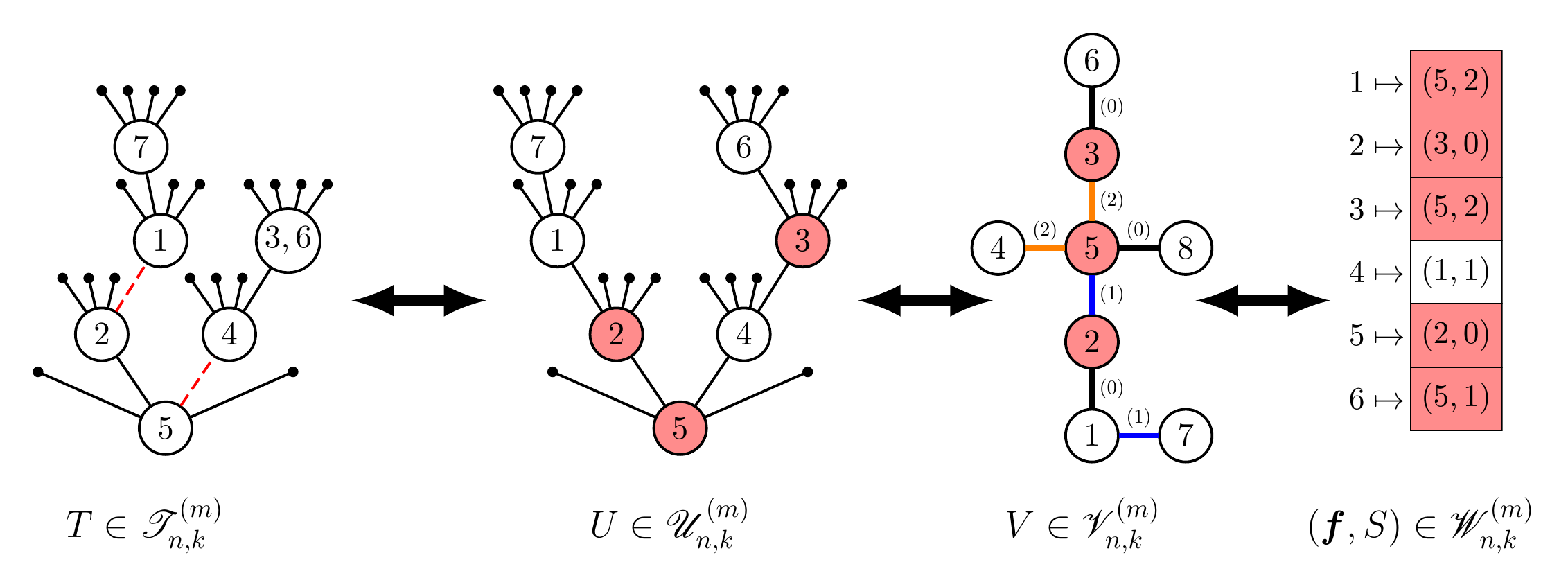}
        \caption[Example of the bijections in \thmref{thm_mshi_everything}]
        {
          An example of the bijections from \thmref{thm_mshi_everything}. 
          In this example, $m=3$, $n=7$, and $k=4$. 
          The edge labels for $V \in \mathscr{V}^{(m)}_{n,k}$ are the number for that edge's color (black is $0$, blue is $1$, orange is $2$).
          The element from $\mathscr{W}^{(m)}_{n,k}$ has $S = \{2,3,5\}$.
          The function $\specialf$ is shown using the convention described in \subsecref{subsec_mshi_to_functions}. 
        } 
        \label{fig_mshieverything}
    \end{figure}
  \subsection{Bijection \texorpdfstring{$\mathscr{T}^{(m)}_{n,k} \isoto \mathscr{U}^{(m)}_{n,k}$}{Script T power (m) sub n,k to Script U power (m) sub n,k}}
    Let $T \in \mathscr{T}^{(m)}_{n,k}$. We apply two local operations (Step 1 and Step 2) to $T$ to obtain its corresponding $U \in \mathscr{U}^{(m)}_{n,k}$. 

    For Step 1, let $v$ be a node of $T$ with children $(T_0,\dots,T_m)$, and with label $S = \{s_1,s_2,\dots,s_p\}$ with $s_1 < s_2< \dots < s_p$. If $|S| > 1$, then replace $v$ by an increasing left-path of singleton marked nodes labeled by $s_1,s_2,\dots,s_p$, each with leaves for their other (rank $\geq 1$) children, except for the last singleton (labeled $s_p$), which is left unmarked and keeps the original children of $v$. If there was a dashed edge in $T$ from $v$ to one of its children, say $T_s$, then the new vertex labeled $s_p$ has the same dashed edge to its $s$th child (which is $T_s$). Let $T'$ be this new tree obtained by applying Step 1. Trees obtained as $T'$ are easily seen to constitute the set of
    \begin{enumerate}
      \item $(m+1)$-ary trees with $n$ nodes labeled by $[n]$,
      \item Such that all internal edges of rank $m$ are descents,
      \item With a choice of solid/dashed for every cadet edge.
      \item With a subset of marked nodes such that every marked node has a node for its leftmost child, and the corresponding edge of rank $0$ is an ascent, meaning the label of its parent is less than the label of its child, and all other children (of rank $\geq 1$) are leaves.
    \end{enumerate}
    It is also clear that Step 1 may be inverted by merging any marked nodes with their sole child node. 

    For Step 2, we apply a local operation to each node of $T'$ with a dashed edge. Let $v$ be such a node, with dashed edge to its child of rank $s$ (we must have $s \in [1,m]$). First convert the dashed edge to a solid edge, and add $v$ to the set of marked nodes. Now there are two cases: if there is at least one node among the children of $v$ of rank $<m$, then we are done with $v$. Otherwise (the only non-leaf child of $v$ is its child of rank $m$), then swap the leftmost child of $v$, a leaf, with the $m$th child of $v$. 

    After applying Step 2 to $T'$ we obtain the resulting tree $U$. It is easy to verify that $U \in \mathscr{U}^{(m)}_{n,k}$. Furthermore, the map has a straightforward inverse: for any marked node $v$ of $U$, if there is a child node of rank $s$ with $s \in [1,m-1]$, simply dash the cadet edge of $v$. Otherwise, by the definition of $\mathscr{U}^{(m)}_{n,k}$, the leftmost child of $v$ must be a node, and all other children of $v$ must be leaves. In this case, if the leftmost edge is a descent, we swap the leftmost and rightmost children of $v$ and dash the resulting internal edge of rank $m$. If the leftmost edge is an ascent, then we undo the Step 1 as described above, that is, merge $v$ with its sole child. Thus we have established the bijection $\mathscr{T}^{(m)}_{n,k} \isoto \mathscr{U}^{(m)}_{n,k}$. 

  \subsection{Bijection \texorpdfstring{$\mathscr{U}^{(m)}_{n,k} \isoto \mathscr{V}^{(m)}_{n,k}$}{Script U power (m) sub n,k to Script V power (m) sub n,k}}
    Let $U \in \mathscr{U}^{(m)}_{n,k}$. To obtain $V \in \mathscr{V}^{(m)}_{n,k}$, we apply three steps: 

    Step 1: For a vertex $v$, let $\text{rightpath}(v)$ denote the unique tuple of nodes $(v_0,\dots,v_{\ell})$ where $v_0=v$, and for all $i \in [\ell]$, $v_{i}$ is the child of rank $m$ of vertex $v_{i-1}$, and $v_{\ell}$ has a leaf for its child of rank $m$.
    Now we apply an operation to each vertex $v$ of $U$. Let $(v^0,\dots,v^m)$ be the children of $v$ in rank order. For each $i \in [0,m-1]$, if $v^i$ is a node, then color the edge from $v$ to $v^i$ with the color $i$, and add more edges of color $i$ between $v$ and every element of $\text{rightpath}(v^i)$. Edges of rank $m$ will soon be deleted, and are not given a color.

    Step 2: Add a node with label $n+1$ and connect it with edges of color $0$ to every node in $\text{rightpath}(u)$ where $u$ is the root of $U$.

    Step 3: Delete all of the original edges of rank $m$, and delete all leaves.

    The set of marked vertices is unchanged throughout. In the end we obtain an element of $\mathscr{V}^{(m)}_{n,k}$. 

    We can describe the inverse of this map explicitly. Let $V \in \mathscr{V}^{(m)}_{n,k}$ be given, and view it as a rooted tree with root $n+1$.

    We modify the children of each vertex $v$ as follows. For each $s \in [0,m-1]$, let $(\kthChildOfColor{s}{1}, \kthChildOfColor{s}{2}, \dots, \kthChildOfColor{s}{\ell})$ be the tuple consisting of all children of $v$ that are connected to $v$ by an edge with color $s$, and with $\text{label}(\kthChildOfColor{s}{1}) > \text{label}(\kthChildOfColor{s}{2}) > \dots > \text{label}(\kthChildOfColor{s}{\ell})$. If $\ell \geq 2$, then for each $i \in [2,\ell]$, remove the edge connecting $\kthChildOfColor{s}{i}$ to $v$, and add an edge of color $m$ connecting $\kthChildOfColor{s}{i-1}$ to $\kthChildOfColor{s}{i}$.

    Once this has been done for each vertex, arrange the children of each vertex $v$ in increasing order by the color of the edge connecting them to $v$. Then add child leaves to $v$ such that the rank of each child node of $v$ is equal to the color of the edge connecting it to $v$ (it is easy to see that this is always possible). Finally delete the vertex $n+1$ and forget all of the edge colors. 

    It is not too hard to see that these maps are inverses of one another.
    Note also that the marked vertices of $U$ must have non-leaf left-children, and therefore the marked vertices of $V$ will have degree at least $2$, and vice versa. This establishes the bijection $\mathscr{U}_{n,k} \isoto \mathscr{V}_{n,k}$.

  \subsection{Bijection \texorpdfstring{$\mathscr{V}^{(m)}_{n,k} \isoto \mathscr{W}^{(m)}_{n,k}$}{Script V power (m) sub n,k to Script W power (m) sub n,k}} \label{subsec_mshi_to_functions}
    Given $V \in \mathscr{V}_{n,k}$, we define the pair $(f,S) \in \mathscr{W}_{n,k}$ as follows. For convenience we instead define a function $\specialf : [n-1] \to [n] \times [m] \cup \{(n+1,0)\}$ which is obviously equivalent to a function $f : [n-1] \to [mn+1]$, as in the definition of $\mathscr{W}^{(m)}_{n,k}$. The condition on $S$ is that for every $s \in S$ there exists $i \in [n-1]$ such that $s$ is the first element of $\specialf(i)$.

    A \emphnew{leaf} of a Cayley $m$-foliage is a vertex of degree $1$. Define $\specialf(1)$ to be the ordered pair $(i,c)$ where $i$ is the label of the leaf vertex of $V$ of minimum label, and $c$ is the color of the edge connecting this leaf to its parent. Then delete this leaf, and use the same rule to define $\specialf(2)$, then $\specialf(3)$, and so on to $\specialf(n-1)$ (deleting leaves at each stage). For the subset $S$ we take the set of labels of the marked internal vertices of $V$. It is easy to see that 
    \begin{align}
      \{a \given a \padtext{is the first coordinate of} \specialf(i) \lpadtext{for some $i \in [n-1]$}\} = \bracketbox{1.3in}{labels of non-leaf vertices of $V$}.
    \end{align}
    It follows that, since $S$ consists of only non-leaf vertices, the set $S$ satisfies its requirement. 

    This simple map is inverted by inverting the associated Pr\"{u}fer sequence and then filling in the colors and the marked internal vertices.
    Explicitly, given $(\specialf,S)$, let $\widehat{f}$ be defined as $\widehat{f}(i) = \widehat{a}$ where $\widehat{a}$ is the first coordinate in the pair given by $\specialf(i)$.
    Then $\widehat{f}$ is the Pr\"{u}fer sequence of a Cayley tree $T$.
    To turn $T$ into a Cayley $m$-foliage, we mark the vertices in the set $S$, and then color the edges of $T$ as follows.
    The color of the edge connecting the leaf of minimum label to its parent is given by the second coordinate in the pair $\specialf(1)$.
    Then we delete this leaf, and use the same rule to obtain another edge color from $\specialf(2)$.
    Continuing in this manner through $\specialf(n-1)$, we establish colors for all but one edge, the one exception being the edge connecting the two remaining vertices after these $n-1$ deletions.
    But it is easy to see that one of these remaining vertices is labeled $n+1$ (every tree has at least two leaves, and we delete the minimum leaf, so we will never delete $n+1$), and we already know that all edges incident to the vertex labeled $n+1$ have color $0$, by the definition of $\mathscr{W}^{(m)}_{n,k}$.
    Thus we have filled in all missing colors.
    It is clear that this map is the inverse of the first, and so we have the final bijection $\mathscr{V}^{(m)}_{n,k} \isoto \mathscr{W}^{(m)}_{n,k}$.

    \figref{fig_3shi_prufers} shows the full bijection from the faces of the $3$-Shi arrangement in $\mathbb{R}^3$ to the marked functions in $\mathscr{W}^{(3)}_{3,k}$ for $k=1,2,3$.

    \begin{figure}[tbp]
      \centering
      \includegraphicswide{figures/imgs/3shi_prufers}
      \caption[The bijection to marked functions for the $3$-Shi arrangement in $\mathbb{R}^3$]{The bijection between faces of the $3$-Shi arrangement in $\mathbb{R}^3$ and elements of $\displaystyle \bigcup_{k=1}^{3} \mathscr{W}^{(3)}_{3,k}$. We view the elements $(f,S) \in \mathscr{W}^{(3)}_{3,k}$ as the equivalent pair $(\specialf, S)$ as described in \subsecref{subsec_mshi_to_functions}. The function $\specialf$ is displayed as a number sequence, where the number in position $i$ is the first number of the pair $\specialf(i)$, with a color determined by the second number of $\specialf(i)$ (black is $0$, blue is $1$, orange is $2$). The set $S$ is a subset of the numbers appearing as the first element of a pair in $\image(\specialf)$, and is indicated by underlining these numbers in the number sequence. }
      \label{fig_3shi_prufers}
    \end{figure}

\section{Enumerative corollaries} \label{sec_enumerative_corollaries}
  In this section, we derive functional equations for the exponential generating functions of the $m$-Catalan and $m$-Shi faces counted according to their dimension. The results for the classical Catalan and Shi arrangements are obtained by substituting $m=1$. We also give explicit counting formulae for the faces of each dimension. These results are straightforward corollaries of the bijective correspondences obtained in the previous sections (\thmref{thm_mcat_bijection} and \thmref{thm_mshi_everything}).   
  \subsection{\texorpdfstring{$m$-Catalan}{m-Catalan} generating function and counting formula} \label{subsec_mcat_gf}
    We define the exponential generating function 
    \begin{align}
      C_m(x,y) := 1 + \sum_{n \geq 1} \frac{x^n}{n!} \sum_{k = 1}^{n} c_{n,k}^{(m)}  y^{k},
    \end{align}
    where $c_{n,k}^{(m)}$ is the number of $k$-dimensional $m$-Catalan faces in $\mathbb{R}^n$.
    \begin{corollary} \label{cor_cat_gf}
      Let $C$ stand for $C_m(x,y)$. We have the functional equation
      \begin{align}\label{eq_m_catalan_functional_simpler}
        C = 1 + \left(e^{x}-1 \right) \left( (1+y) C^{m+1} - C \right).
      \end{align}
      Furthermore, the number of $k$-dimensional $m$-Catalan faces in $\mathbb{R}^n$ is
      \begin{align} \label{eq_m_catalan_counting_formula}
        c_{n,k}^{(m)} = \sum_{i=k}^{n} S(n,i) (i-1)! \binom{i}{k} \sum_{j=0}^{i-k} (-1)^j \binom{i-k}{j} \binom{i(m+1)-jm}{i-1}.
      \end{align}
    \end{corollary}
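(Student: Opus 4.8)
The plan is to use \thmref{thm_mcat_bijection} to replace $c_{n,k}^{(m)}$ by the number of $[n]$-decorated $(m+1)$-ary trees with $k$ free nodes, then derive the functional equation \eref{eq_m_catalan_functional_simpler} by a root decomposition, and finally extract the closed formula \eref{eq_m_catalan_counting_formula} from that equation by Lagrange inversion.

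\emph{The functional equation.} Every nonempty decorated tree has a node root $v$: its label is an arbitrary nonempty subset of $[n]$ (contributing the EGF factor $e^x-1$), it is free (factor $y$), and its children $T_0,\dots,T_m$ carry a partition of the remaining labels. I would split according to the cadet of $v$. If $T_1,\dots,T_m$ are all leaves there is no cadet, and these $m+1$ subtrees contribute $C$ (for $T_0$; each of the others contributing $1$). Otherwise let $s^{*}\in[1,m]$ be the largest rank with $T_{s^{*}}$ a node; then $T_0,\dots,T_{s^{*}-1}$ contribute $C^{s^{*}}$, the forced leaves $T_{s^{*}+1},\dots,T_m$ contribute $1$ each, and $T_{s^{*}}$ (a node-rooted tree, EGF $C-1$) contributes $(C-1)(1+\tfrac1y)$: the $1$ is for a solid cadet edge and the $\tfrac1y$ for a dashed one, which demotes the root of $T_{s^{*}}$ from free to captive; note $C-1$ is divisible by $y$, so $\tfrac{C-1}{y}$ is a genuine power series. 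Summing over $s^{*}$ via $\sum_{s^{*}=1}^{m}C^{s^{*}}=\tfrac{C^{m+1}-C}{C-1}$, the subtrees-plus-cadet-choice total is $C+\tfrac{1+y}{y}(C^{m+1}-C)$; multiplying by the root factor $(e^x-1)y$ and adding $1$ for the empty tree gives, after simplification, $C=1+(e^x-1)\bigl((1+y)C^{m+1}-C\bigr)$.

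\emph{The counting formula.} Substituting $z=e^x-1$ writes $C(x,y)=\widetilde C(z,y)$ with $\widetilde C=(1+z(1+y)\widetilde C^{m+1})/(1+z)$, and from $(e^x-1)^i=\sum_{n}S(n,i)\,i!\,\tfrac{x^n}{n!}$ one obtains $c_{n,k}^{(m)}=\sum_{i}S(n,i)\,i!\,[z^iy^k]\widetilde C$. Writing $\widetilde C=1+V$, the relation rearranges to $V=z\,\phi(V)$ with $\phi(V)=(1+V)\bigl((1+y)(1+V)^m-1\bigr)$, whose constant term in $V$ is $y\neq 0$, so Lagrange inversion gives $[z^i]V=\tfrac1i[V^{i-1}]\phi(V)^i$ for $i\geq1$. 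Expanding $\phi(V)^i=\sum_{j=0}^{i}\binom{i}{j}(-1)^j(1+y)^{i-j}(1+V)^{\,i(m+1)-jm}$ by the binomial theorem, reading off $[V^{i-1}]$ and then $[y^k]$, and using $\binom{i}{j}\binom{i-j}{k}=\binom{i}{k}\binom{i-k}{j}$, gives $[z^iy^k]\widetilde C=\tfrac1i\binom{i}{k}\sum_{j=0}^{i-k}(-1)^j\binom{i-k}{j}\binom{i(m+1)-jm}{i-1}$. Multiplying by $S(n,i)\,i!$ and summing over $i$ (the summand vanishing unless $k\leq i\leq n$) reproduces \eref{eq_m_catalan_counting_formula}.

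\emph{Main obstacle.} The delicate step is the bookkeeping in the root decomposition: recognizing that among $T_1,\dots,T_m$ only the cadet edge can be dashed, that dashing it converts exactly one free node to captive (the $\tfrac1y$), and that combining the no-cadet term with the $s^{*}$-sum collapses to the tidy $(1+y)C^{m+1}-C$. Once \eref{eq_m_catalan_functional_simpler} is available the Lagrange inversion is routine; the only point of care is noting that $\phi$ has nonzero constant term, so the inversion identity is valid (as an identity of polynomials in $y$ over $\mathbb{Q}$).
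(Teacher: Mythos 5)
Your proof is correct, and its skeleton (reduce to trees via \thmref{thm_mcat_bijection}, derive the functional equation by a root decomposition, extract the formula by Lagrange inversion) is the same as the paper's, but both main computations are executed differently. For the functional equation, the paper first reinterprets the statistic: since every node has at most one captive child, the number of free nodes equals the number of nodes with no captive child, and the decomposition then splits only on whether the root has a captive child, giving $C = 1 + y(e^x-1)C^{m+1} + (e^x-1)C(C^m-1)$ with no division by $y$. You instead track free nodes directly, sum over the rank $s^{*}$ of the cadet, and pay a factor $1/y$ when the cadet edge is dashed; this is valid because $C-1$ is divisible by $y$ (the root is always free), as you correctly note, but it costs you the geometric sum over $s^{*}$ and the $1/y$ bookkeeping that the paper's reinterpretation avoids. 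For the counting formula, both arguments separate the ordered set partition from an unlabeled dash tree (your substitution $z=e^x-1$ is exactly the paper's identity $c_{n,k}^{(m)} = \sum_i S(n,i)\, i!\, h^{(m)}_{i,k}$) and apply Lagrange inversion to the same $\phi(t) = (1+y)(1+t)^{m+1}-(1+t)$; you then evaluate $[t^{i-1}][y^k]\phi(t)^i$ by a direct binomial expansion together with $\binom{i}{j}\binom{i-j}{k}=\binom{i}{k}\binom{i-k}{j}$, whereas the paper deliberately avoids this algebra and instead interprets the coefficient as counting marked cells in an $(m+1)\times i$ grid, finishing by inclusion--exclusion. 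Your route is shorter and purely formal; the paper's buys the combinatorial interpretation \eref{eq_grids_and_trees} that it exploits in the subsequent remarks (e.g.\ the collapse of the inner sum to $\binom{i+k}{k-1}$ when $m=1$). Your observation that $\phi(0)=y$ is a unit only over $\mathbb{Q}(y)$, with the identity then descending to polynomials in $y$, is a legitimate point of care that the paper glosses over.
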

    \begin{proof}
    Let 
    \begin{align}
      \mathscr{C}_m = \bigcup_{n \geq 0} \{\text{$[n]$-decorated $(m+1)$-ary trees}\}.
    \end{align}
    By \thmref{thm_mcat_bijection}, $C_m(x,y)$ is the generating function for elements of $\mathscr{C}_m$, with $x$ tracking $n$ (the size of the labelling set), and $y$ tracking the number of free nodes. We now give a recursive description of these trees, which translates into the equation 
      \begin{align} \label{eq_cat_gf_first} 
        C = 1 + y(e^x-1) C^{m+1} + (e^x-1)C(C^{m}-1).
      \end{align}
    Our approach follows the Symbolic Method (see \cite[Part A]{cite_flajolet2009analyticcombinatorics}). First, we claim that for any tree $T \in \mathscr{C}_m$, the number of free nodes of $T$ is equal to the number of nodes of $T$ ``without dashed children,'' that is, the number of nodes of $T$ for which none of their children are captive. This follows from the fact that every node has at most one captive child, and so the number of captive nodes of $T$ is equal to the number of nodes of $T$ that have a captive child (the bijection is $v \mapsto \text{parent}(v)$). Taking complements proves the claim.

    Therefore, we treat $C_m(x,y)$ as the generating function for elements of $\mathscr{C}_m$ with $x$ tracking $n$ and $y$ tracking the number of nodes without dashed children. With this in mind, a tree in $\mathscr{C}_m$ can be either:
      \begin{itemize}
        \item A leaf, contributing the term $1$ in \eref{eq_cat_gf_first}.
        \item A tree whose root is a node with no captive children. Such a tree is built from a nonempty set (the label of the root) and an arbitrary $m$-tuple of trees (the children). Thus, these trees contribute the term $y(e^x-1)C^2$. The $y$ counts the root as a node with no dashed children. 
        \item A tree whose root is a node with a captive child. Such a tree is built from a nonempty set (the label of the root), an arbitrary left-child, and a $m$-tuple of trees (the other children) not all of which are leaves. We need not record which child is captive, since it is always the cadet. These trees contribute the term $(e^x-1)C(C^m-1)$. 
      \end{itemize}
      Summing up the decomposition we obtain \eref{eq_cat_gf_first}. Then equation \eref{eq_m_catalan_functional_simpler} results from \eref{eq_cat_gf_first} by a simple algebraic simplification.

      We now proceed to the counting formula \eref{eq_m_catalan_counting_formula}. It is possible to obtain \eref{eq_m_catalan_counting_formula} directly from the function equation \eref{eq_m_catalan_functional_simpler}, but a significant amount of algebraic manipulation is required. Instead, we will obtain \eref{eq_m_catalan_counting_formula} by slightly more direct tree enumeration.

      First, we remove the labels from our trees and count them separately. A \emphnew{unlabeled $(m+1)$-ary dash tree} is an unlabeled $(m+1)$-ary tree that has two types of internal edges: solid/dashed, where leftmost edges are always solid, and if an edge is dashed then all sibling edges to its right must lead to leaves. We define the ordinary generating function
      \begin{align}
        H_m(x,y) := 1 + \sum_{n \geq 1} x^n \sum_{k=1}^{n} h^{(m)}_{n,k} y^k,
      \end{align}
      where $h^{(m)}_{n,k}$ is the number of unlabeled $(m+1)$-ary dash trees with $n$ nodes, of which $k$ have no dashed children (or, equivalently, with $k$ free nodes). Clearly, an $[n]$-decorated $(m+1)$-ary tree may be represented by a pair $(T,\pi)$ where $T$ is an unlabeled $(m+1)$-ary dash tree and $\pi$ is an ordered set partition of $[n]$ with the number of blocks of $\pi$ equal to the number of nodes of $T$. Since the number of ordered set partitions of $[n]$ with $i$ parts is equal to $S(n,i)i!$, we have 
      \begin{align} \label{eq_m_catalan_dressed_formula}
        c_{n,k}^{(m)} = \#\bracketbox{5cm}{$[n]$-decorated $(m+1)$-ary trees such that $k$ nodes do not have dashed children} = \sum_{i=1}^{n} S(n,i) i! h^{(m)}_{i,k}.
      \end{align}
      It remains to prove that for $i>0$,
      \begin{align} \label{eq_unlabeled_tree_claim} 
        h^{(m)}_{i,k} = \frac{1}{i} \sum_{j=0}^{i} (-1)^j \binom{i-k}{j} \binom{i(m+1)-jm}{i}.
      \end{align}
      We obtain equation \eref{eq_unlabeled_tree_claim} via the Lagrange Inversion Theorem \cite[pg. 66]{cite_flajolet2009analyticcombinatorics}. Let $H$ stand for $H_m(x,y)$. By the same decomposition as the one given above for $C$, we have
      \begin{align} \label{eq_unlabeled_trees_functional} 
        H = 1 + x \left( (y+1) H^{m+1} - H \right).
      \end{align}
      Let $H^{*} = H-1$. It follows that
      \begin{align} \label{eq_unlabeled_trees_transform_functional} 
        H^{*} = x \left( (y+1) (H^{*}+1)^{m+1} - (H^{*}+1) \right).
      \end{align}
      We recognize \eref{eq_unlabeled_trees_transform_functional} as a functional equation of the form $H^{*} = x \phi(H^*)$ where $\phi(t) := (y+1)(t+1)^{m+1}-(t+1)$. Therefore, the Lagrange Inversion Theorem gives
      \begin{align}  \label{eq_h_coefficients}
        h^{(m)}_{i,k}    & =  [x^i] [y^{k}] H^{*} \\
                         & = \frac{1}{i} [t^{i-1}][y^k] \phi(t)^i \\
                         & = \frac{1}{i} [t^{i-1}][y^k] \left((y+1)(t+1)^{m+1} - (t+1) \right)^{i}, \label{eq_h_coefficients_final}
      \end{align}
      where $\emphnewmath{[x^i] F}$ denotes the coefficient on $x^i$ in the formal power series $F$.
      Instead of pursuing an algebraic simplification, we prefer to give a combinatorial interpretation of 
      \begin{align}
        [t^{i-1}][y^k] \left((y+1)(t+1)^{m+1} - (t+1) \right)^{i}. \label{eq_h_coefficients_before_grid}
      \end{align}
      Consider the set $[m+1] \times [i]$, viewed as a grid with $m+1$ rows and $i$ columns. Let $Q_{i,m+1} := \{(X,N) \given X \subset [m+1] \times [i], N \subset [i]\}$. It is not hard to see that the generating function for $Q_{i,m+1}$ with $t$ tracking $|X|$ and $y$ tracking $|N|$ is $\left((y+1)(t+1)^{m+1} \right)^{i}$. 

      Now let $\overline{Q}_{i,m+1}$ consist of the elements $(X,N) \in Q_{i,m+1}$ such that for every $n' \in [i] \smallsetminus N$, the subset $X$ includes at least one cell in the last $m$ rows of column $n'$. We claim that the analogous generating function for $\overline{Q}_{i,m+1}$ is $\left( (y+1)(t+1)^{m+1} - (t+1) \right)^{i}$. 

      We reason as follows: the term $(y+1)(t+1)^{m+1}$ generates all subsets of $[m+1]$ twice, once with weight $y$ and once with weight $y^0$. By subtracting $t+1$, we exclude all subsets of $[m+1]$ counted with weight $y^0$ that do not include any elements except for possibly $1 \in [m+1]$. Taking to the $i$th power fills out a $[m+1] \times [i]$ grid such that each column given the weight $y^0$ contains at least one selected cell in its last $m$ rows. Thus we obtain exactly the elements of $\overline{Q}_{i,m+1}$.

      Hence the expression \eref{eq_h_coefficients_before_grid} counts elements $(X,N) \in \overline{Q}_{i,m+1}$ with $|X| = i-1$ and $|N| = k$. Since these elements are constrained by several ``at least one'' conditions, we will count them by inclusion-exclusion. 

      For $s=1,\dots,i-k$, let $A_s$ be the set of pairs $(X,N) \in Q_{i,m+1}$ with $|X| = i-1$, $|N|=k$, and such that the subset $X$ includes at least one cell in the last $m$ rows of column $n'_s$, where $n'_s$ is the $s$th smallest element of $[i] \smallsetminus N$. Note that the right-hand-side of \eref{eq_h_coefficients_before_grid} equals $|\bigcap_{s=1}^{i-k} A_s|$. 

      Let $A^{\compl}_s$ denote the complement of $A_s$ in $\{(X,N) \in Q_{i,m+1} \given |X|=i-1, |N|=k\}$, that is, the elements of $Q_{i,m+1}$ with $|X|=i-1$, $|N|=k$, and such that the subset $X$ does not include any cells in the last $m$ rows of column $n'_s$, where $n'_s$ is the $s$th smallest element of $[i] \smallsetminus N$. Observe that the intersection of any $j \geq 0$ of the $A^{\compl}_s$ has cardinality $\binom{i}{k} \binom{i(m+1)-jm}{i-1}$: the first coefficient is the choice of $N$, and the second is the choice of $X$.

      Thus starting from \eref{eq_h_coefficients_final}, and applying the above reasoning and inclusion-exclusion, we have
      \begin{align} \label{eq_grid_inclusion_exclusion} 
        [x^i] [y^{k}] H &= \frac{1}{i} \left| \bigcap_{s=1}^{i-k} A_s \right|  \\
        &= \frac{1}{i} \sum_{j=0}^{i-k} (-1)^j \binom{i-k}{j} \left| \bigcap_{s=1}^{j} A^{\compl}_s \right| \\
        &= \frac{1}{i} \sum_{j=0}^{i-k} (-1)^j \binom{i-k}{j} \binom{i}{k} \binom{i(m+1)-jm}{i-1}. \label{eq_grid_inclusion_exclusion_result}
      \end{align}
      Finally, combining \eref{eq_m_catalan_dressed_formula} and \eref{eq_grid_inclusion_exclusion_result} we obtain
      \begin{align} \label{} 
        c_{n,k}^{(m)} = \sum_{i=0}^{n} S(n,i) i! \frac{1}{i} \sum_{j=0}^{i-k} (-1)^j \binom{i-k}{j} \binom{i}{k} \binom{i(m+1)-jm}{i-1}.
      \end{align}
      The formula \eqref{eq_m_catalan_counting_formula} follows by canceling the $\frac{1}{i}$, and factoring the $\binom{i}{k}$ out of the sum on $j$.
    \end{proof}
    \begin{remark}
      In the case $m=1$ in \eref{eq_m_catalan_counting_formula}, one can show that the inner sum on $j$ collapses to $\binom{i+k}{k-1}$, that is,
      \begin{align} \label{} 
        \sum_{j=0}^{i-k} (-1)^j \binom{i-k}{j} \binom{2i-j}{i-1} = \binom{i+k}{k-1}.
      \end{align}
      This follows from the above inclusion-exclusion argument, because for $m=1$ the ``at least one'' conditions become ``exactly one'' conditions, removing the need for inclusion-exclusion entirely. We leave the details to the reader. It follows that the number of $k$-dimensional faces of the classical Catalan arrangement in $\mathbb{R}^n$ is simply
      \begin{align} 
        c_{n,k}^{(1)} = \sum_{i=k}^{n} S(n,i) (i-1)!  \binom{i}{k} \binom{i+k}{k-1}.
      \end{align}
      This formula was first obtained via a finite field method in \cite[Cor. 8.3.2]{cite_athanasiadis1996algebraiccombinatorics}. 
      We have a combinatorial explanation for each term: $S(n,i)(i-1)! \binom{i}{k} \binom{i+k}{k-1}$ is the number of $[n]$-decorated binary trees with $i$ nodes, of which $k$ do not have dashed children. 
      Summing on $i$ gives all the $[n]$-decorated binary trees that correspond to a face of dimension $k$. 
    \end{remark}
    \begin{remark}
      In proving \corref{cor_cat_gf} we have shown via Lagrange Inversion that these two sets are equinumerous:
      \begin{align} \label{eq_grids_and_trees} 
        \bracketbox{2.4in}{Ways of choosing $k$ columns and $i-1$ cells from an $(m+1) \times i$ grid such that the unchosen columns have at least one selected cell in their last $m$ slots.} \cong \bracketbox{1.8in}{Unlabeled $(m+1)$-ary dash trees with $i$ nodes, of which $k$ have no dashed children} \times [i].
      \end{align}
      It is not hard to construct a direct bijection. The columns of the grid play the role of nodes, and the $k$ chosen columns correspond to nodes without dashed children. The chosen cells in each column indicate which children are nodes. Reading the grid from left-to-right, we can construct the tree one node at a time. The extra factor of $i$ is there to allow a cyclic re-ordering of the columns (only one ordering is possible so we do not run out of nodes before columns).

      This is one example of the ubiquitous ``Cycle Lemma'', which arises frequently in the enumeration of trees (see \cite{cite_dershowtiz1990thecycle} for more examples). If one specializes to $k=i-1$ and $m=1$, then the bijection in \eref{eq_grids_and_trees} recovers the well-known formula $\text{Cat}_i = \frac{1}{i} \binom{2i}{i-1}$.

      By including the labels of the trees, we could obtain a bijection between the set of Catalan faces and certain placements of subsets of $[n]$ into grids of various sizes. We leave the details to the reader.

    \end{remark}
  \subsection{\texorpdfstring{$m$-Shi}{m-Shi} generating function and counting formula} \label{subsec_mshi_gf}
    We define the exponential generating function 
    \begin{align} \label{eq_S_def}
      S_m(x,y) := 1 + \sum_{n \geq 1} \frac{x^n}{n!} \sum_{k  = 1}^{n} s_{n,k}^{(m)}  y^{k} ,
    \end{align}
    where $s_{n,k}^{(m)}$ is the number of $k$-dimensional $m$-Shi faces in $\mathbb{R}^n$. 
    \begin{corollary} \label{cor_shi_gf}
      Let $S$ stand for $S_m(x,y)$. We have the functional equation
      \begin{align} \label{eq_mshi_functional}
        S = \exp \left( x(y+1)S^m - x \right). 
      \end{align}
      Furthermore, the number of $k$-dimensional $m$-Shi faces in $\mathbb{R}^n$ is
      \begin{align} \label{eq_mshi_counting}
        s_{n,k}^{(m)} = \binom{n}{k} \sum_{i=0}^{n-k} (-1)^i \binom{n-k}{i} (m(n-i)+1)^{n-1}.
      \end{align}
    \end{corollary}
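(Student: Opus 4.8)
The plan is to derive both assertions from \thmref{thm_mshi_everything}, which supplies the identifications $s_{n,k}^{(m)} = |\mathscr{V}^{(m)}_{n,k}| = |\mathscr{W}^{(m)}_{n,k}|$: the functional equation \eref{eq_mshi_functional} will come from the Cayley $m$-foliage model together with the Symbolic Method, and the counting formula \eref{eq_mshi_counting} will come from the marked-function model together with inclusion-exclusion. These two parts are logically independent of one another.

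For the functional equation, I would first re-encode $\mathscr{V}^{(m)}_{n,k}$. Rooting a Cayley $m$-foliage on $n+1$ vertices at its maximum-labeled vertex $n+1$ and deleting that vertex leaves a forest of rooted labeled trees on $[n]$ whose roots are exactly the former neighbors of $n+1$; every edge of this forest carries a color in $[0,m-1]$, while the $n+1$-to-root edges carry the mandatory color $0$ and so contribute no extra factor. A vertex of $[n]$ is internal in the foliage if and only if it is a non-leaf of this rooted forest, so a marked internal node is precisely a marked non-leaf; since the number of marked nodes is $n-k$, the number of leaves plus unmarked non-leaves is $k$. Weighting each leaf and each unmarked non-leaf by $y$ and each marked non-leaf by $1$, one gets $S_m(x,y) = \exp(R)$, where $R = R_m(x,y)$ is the exponential generating function of a single such rooted colored tree. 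Such a tree is either a lone (leaf) vertex, contributing $xy$, or a (non-leaf) vertex carrying a non-empty set of colored subtrees, contributing $x(1+y)\bigl(e^{mR}-1\bigr)$, where $e^{mR}$ encodes the ``set of colored subtrees'' ($m$ colors per subtree) and $1+y$ records the marked/unmarked choice at the root. Hence $R = xy + x(1+y)\bigl(e^{mR}-1\bigr) = x(1+y)e^{mR} - x$, and since $R = \log S_m$ and $e^{mR} = S_m^m$, this is exactly \eref{eq_mshi_functional}.

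For the counting formula I would work in $\mathscr{W}^{(m)}_{n,k}$. Put $B_i := [(i-1)m+1,\, im]$ for $i \in [n]$, so that $B_1,\dots,B_n$ are pairwise disjoint $m$-element blocks with $B_1\cup\dots\cup B_n = [mn] \subset [mn+1]$, and the membership condition on $(f,S)$ reads $\image(f)\cap B_i \neq \varnothing$ for all $i \in S$. Summing over the choice of $S$ and using that, by symmetry among the blocks, the number of valid $f$ depends on $S$ only through $|S| = n-k$, one obtains
\begin{align}
  s_{n,k}^{(m)} = \binom{n}{n-k}\cdot \#\{\, f:[n-1]\to[mn+1] \mid \image(f)\cap B_i \neq \varnothing \text{ for } i = 1,\dots,n-k \,\}.
\end{align}
The remaining count is handled by inclusion-exclusion over which of the $n-k$ blocks are entirely missed by $\image(f)$: missing a prescribed set of $i$ of them confines $f$ to the other $mn+1-im = m(n-i)+1$ values, giving $(m(n-i)+1)^{n-1}$ functions. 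Since $\binom{n}{n-k} = \binom{n}{k}$, this yields \eref{eq_mshi_counting}.

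The one point needing care is the dictionary in the Symbolic Method step --- verifying that ``internal node of a foliage'' corresponds exactly to ``non-leaf of the rooted forest,'' and that the $y$-weight matches the grading index $k$ of $\mathscr{V}^{(m)}_{n,k}$; once this is pinned down, the tree decomposition and the coefficient extraction are routine. One could instead try to recover \eref{eq_mshi_counting} from \eref{eq_mshi_functional} by Lagrange inversion, in the spirit of \corref{cor_cat_gf}, but the change of variables $R = \log S$ brings in a factor of the shape $s/\log(1+s)$ and hence Stirling-type coefficients, so passing through $\mathscr{W}^{(m)}_{n,k}$ is the cleaner route.
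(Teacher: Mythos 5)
Your proposal is correct and follows essentially the same route as the paper: the functional equation is obtained by rooting the Cayley $m$-foliage at the vertex $n+1$, deleting it, and applying the exponential formula to the resulting decomposition $R = xy + x(1+y)(S^m-1)$ (the paper's $E$), and the counting formula is obtained by the same inclusion-exclusion over missed blocks in $\mathscr{W}^{(m)}_{n,k}$. The only cosmetic difference is that you factor out the $\binom{n}{n-k}$ by symmetry before the inclusion-exclusion, whereas the paper carries it inside the intersection cardinalities.
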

    \begin{proof}
      We claim that $S$ is the generating function for ``rooted Cayley $m$-forests'', which we define now. A \emphnew{rooted Cayley $m$-tree} is a Cayley tree (see \subsecref{subsec_shi_everything_results}) with one distinguished vertex called the \emphnew{root}, and with every edge assigned one of $m$ colors, represented by the numbers $[0,m-1]$. An \emphnew{elder node} of a rooted Cayley $m$-tree is a vertex with at least one child (that is, $v$ is an elder node if at least one of its neighbors is further from the root than $v$ is). A \emphnew{rooted Cayley $m$-forest} is an unordered set of rooted Cayley $m$-trees. By \thmref{thm_mshi_everything}, $s_{n,k}^{(m)}$ is equal to the number of Cayley $m$-trees with $n+1$ vertices with a marked subset of the vertices $\{1,\dots,n\}$ that have degree $\geq 1$, and such that all edges incident to the vertex $n+1$ have color $0$. It is easy to see that, by deleting the vertex labeled $n+1$, $s^{(m)}_{n,k}$ is also equal to the number of rooted Cayley $m$-forests with $n$ vertices and a marked subset of $n-k$ elder nodes. Thus $S$ is the generating function for rooted Cayley $m$-forests with a marked subset of elder nodes, where $x$ tracks the number of vertices and $y$ tracks the number of unmarked vertices. Let $E = E(x,y)$ be the exponential generating function for rooted Cayley $m$-trees with a marked subset of elder nodes, where $x$ tracks the number of vertices and $y$ tracks the number of unmarked vertices. By the Exponential Formula (see \cite[Cor. 5.1.1]{cite_stanley1999enumerativecombinatorics}), $S = \exp E$. Furthermore, we claim that
      \begin{align} \label{eq_rooted_trees_equation}
        E = x(y+1)(S^m-1) + xy.
      \end{align}
      Indeed, a tree described by $E$ may be decomposed at the root vertex, leaving either:
      \begin{enumerate}
        \item A tree whose root has at least one child. In this case, the root is an elder node, so may or may not be marked. For all $s \in [0,m-1]$, the set of children connected to the root by an edge of color $s$ forms an element appearing as a term in the generating function $S$. Viewing the full set of children as a tuple 
        \begin{align}
          (\text{children of color $0$}, \text{children of color $1$},\dots,\text{children of color $m-1$}),
        \end{align}
        the full (nonempty) set of children forms an element appearing as a term in the generating function $S^m-1$. Thus, we have the term $x(y+1)(S^m-1)$ in \eref{eq_rooted_trees_equation}.
        \item A tree whose root has no children (just a single vertex). This vertex is not an elder node, so it can never be marked. Thus, these trees contribute the term $xy$.
      \end{enumerate}
      Summing up, we obtain \eref{eq_rooted_trees_equation}. By combining \eref{eq_rooted_trees_equation} with $S = \exp E$ we obtain
      \begin{align}
        S = \exp \left( x(y+1)(S^m-1) + xy \right),
      \end{align}
      whence, by slight algebraic simplification, we obtain \eref{eq_mshi_functional}.

      Now we proceed to the counting formula \eref{eq_mshi_counting}. It is possible to obtain this formula directly from the functional equation \eref{eq_mshi_functional}, but we prefer to make use of our bijective results. What follows is a routine application of inclusion-exclusion to the functions in $\mathscr{W}^{(m)}_{n,k}$ (see \subsecref{subsec_mshi_everything_results}). Let $A$ be the set of pairs $(f,S)$ where $f : [n-1] \to [mn+1]$ and $S \subset [n]$. For $j=1,\dots,n-k$, let $A_j$ be the set of pairs $(f,S) \in A$ such that at least one element of $[n-1]$ is mapped to $[(s_j-1)m+1,s_jm]$, where $s_j$ is the $j$th element of $S$. Let $A_j^{\compl}$ denote $A \smallsetminus A_j$, that is, the set of pairs $(f,S) \in A$ such that no elements of $[n-1]$ are mapped to $[(s_j-1)m+1,s_jm]$, where $s_j$ is the $j$th element of $S$. 
     Note that the intersection of any $i\geq0$ of the $A^{\compl}_j$ has cardinality $\binom{n}{n-k}(m(n-i)+1)^{n-1}$, since once we choose $S$ (the $\binom{n}{n-k}$ term) we have merely blocked out $mi$ possible values in the codomain of $f$. In total, 
      \begin{align} \label{} 
        s_{n,k}^{(m)} &= \left| \bigcap_{j=1}^{n-k} A_j \right| \\
        &= \sum_{i=0}^{n-k} 
        (-1)^i 
        \binom{n-k}{i} 
        \left| 
        \bigcap_{j=1}^{i} 
        A^{\compl}_{j}
        \right| \\
        &= \sum_{i=0}^{n-k} (-1)^i \binom{n-k}{i} \binom{n}{n-k} (m(n-i)+1)^{n-1}. \label{eq_mshi_counting_pre}
      \end{align}
      Of course, $\binom{n}{n-k} = \binom{n}{k}$, and the formula \eref{eq_mshi_counting} is obtained by factoring this binomial coefficient out of the sum. 
    \end{proof}
    
    Setting $m=1$ in \eref{eq_mshi_counting}, we recover the formula \eref{eq_athanasiadis_shi_counting} in \thmref{thm_ath_shi} first obtained via a finite field method in \cite[Thm. 8.2.1]{cite_athanasiadis1996algebraiccombinatorics}. 
    \begin{remark}
      For small $k$, the functions in $\mathscr{W}^{(m)}_{n,k}$ yield easy positive formulae. For example, the number of one-dimensional $m$-Shi faces is simply
      \begin{align} \label{eq_shi_onedim}
        s_{n,1}^{(m)} = n!m^{n-1},
      \end{align}
      which was also observed in \remref{rem_mshi_onedim}.
      For $k=2$, there are only a few cases to consider, so it is easily shown that 
      \begin{align} \label{eq_shi_twodim}
        s_{n,2}^{(m)} = \frac{ n! (n-1) (m(n+2)+2)m^{n-2}}{4}.
      \end{align}
      It is not too hard to derive the formula \eref{eq_shi_onedim} directly from the definition of Shi faces, but we do not know a direct proof of the formula \eref{eq_shi_twodim}.
    \end{remark}

\section{Conclusions and questions} \label{sec_conclusions}
  We are hopeful that the bijections introduced in this paper will be useful for future study of the faces of hyperplane arrangements. In this section, we highlight some possible directions for future research.
  \subsection{Structure of the faces}
  So far, we have only used the bijection to find the cardinality of the set of faces of each dimension. Going further, the faces have a natural partial order given by closure-inclusion, and it would be interesting to understand the corresponding partial order on $[n]$-decorated binary trees. Further still, the set of faces of any hyperplane arrangement can be given a semigroup structure, whose linearization defines the so-called \emph{Tits algebra} \cite[Chs. 1 \& 9]{cite_aguiar2017topicsin}. It would be interesting to understand this structure in terms of trees, or other combinatorial objects. For the braid arrangement, the Tits algebra structure is well-understood in terms of interleaving the blocks of ordered set partitions \cite[Sec. 3C]{cite_brown1998randomwalks}. Interesting applications are given in \cite{cite_bidigare1999acombinatorial}. No such combinatorial answer is known for the Catalan or Shi arrangements. The question of understanding this structure for the Shi arrangement was raised explicitly in \cite[Sec. 3F]{cite_brown1998randomwalks}, but remains open.

  \subsection{Other hyperplane arrangements}
  There are many other hyperplane arrangements closely related to the Catalan and Shi arrangements. Of particular interest is the Linial arrangement, consisting of hyperplanes 
  \begin{align}
    x_i - x_j = 1 \; \text{for} \; 1 \leq i < j \leq n.
  \end{align}
  It has been shown \cite[Thm. 8.1]{cite_postnikov2000deformationsof} that the number of regions of the Linial arrangement in $\mathbb{R}^n$ is 
  \begin{align} \label{eq_linial_regions}
    2^{-n} \sum_{k=0}^{n} \binom{n}{k} (k+1)^{n-1}.
  \end{align}
  It was further shown \cite[Thm. 8.2]{cite_postnikov2000deformationsof} that \eref{eq_linial_regions} is the number of ``alternating trees'' with $n$ vertices, and also the number of ``local binary search trees'' with $n$ vertices. It would be interesting to find a similar combinatorial interpretation for the rest of the faces. Currently, there is not even a known formula for the number of Linial faces.
  
  The braid, Catalan, and Shi arrangements belong to a large family known as \emph{truncated affine arrangements}, which are arrangements consisting of the hyperplanes
  \begin{align} \label{} 
    x_i-x_j=s \padtext{for} 1 \leq i < j \leq n,
  \end{align}
  where $s$ runs through an interval of integers $[-a,b]$ for $a,b\geq0$. The regions of truncated affine arrangements have been enumerated in full generality in \cite{cite_postnikov2000deformationsof} and bijections from the regions to certain families of trees have been given in \cite{cite_bernardi2018deformationsof}. The faces have not been enumerated in general, and merit future study.

  In another direction, the Shi arrangement bears a resemblance to the so-called Ish arrangement \cite{cite_armstrong2012shi}, consisting of hyperplanes 
  \begin{align}
    x_i - x_j &= 0 \padtext{for} 1 \leq i < j \leq n, \padtext{and} \\
    x_1 - x_j &= i \padtext{for} 1 \leq i < j \leq n.
  \end{align}
  Although the two arrangements look rather different, they have the same number of regions and the same characteristic polynomial, among many other striking similarities. However, it is evident as early as $n=3$ that they do not have the same number of faces (in $\mathbb{R}^3$ there are $43$ Shi faces and $45$ Ish faces). There is still much to be understood about the faces of these arrangements.

\printbibliography

\end{document}